\documentclass[12pt,leqno,twoside]{article}
\usepackage{amssymb}
\usepackage{amsmath}
\usepackage{amsthm,amscd}
\usepackage{bbm}
\usepackage{bm}
\usepackage{a4,indentfirst,latexsym}
\usepackage{graphics}
\usepackage{mathrsfs}
\usepackage{cite,enumitem,graphicx}
\usepackage[colorlinks=true,urlcolor=blue,
citecolor=blue,linkcolor=blue,linktocpage,pdfpagelabels,
bookmarksnumbered,bookmarksopen]{hyperref}
\usepackage[colorinlistoftodos]{todonotes}
\usepackage{color}

\allowdisplaybreaks 
\newtheorem{theorem}{Theorem}[section]
\newtheorem{lemma}{Lemma}[section]

\newtheorem{definition}{\hspace{2em} Definition}[section]
\newtheorem{corollary}{Corollary}[section]

\newtheorem{remark}{\it Remark}[section]

\newtheorem{claim}{\it {Claim}}[section]

\def\B{\mathbb{B}}

\def\N{\mathbb{N}}

\def\R{\mathbb{R}}

\def\cG{\mathcal{G}}

\def\cI{\mathcal{I}}

\def\cL{\mathcal{L}}
\def\cM{\mathcal{M}}

\def\cO{\mathcal{O}}
\def\cP{\mathcal{P}}

\def\cR{\mathcal{R}}
\def\cS{\mathcal{S}}
\def\cT{\mathcal{T}}

\newcommand{\la}{\langle}
\newcommand{\ra}{\rangle}
\newcommand{\ran}{\rangle}
\newcommand{\lan}{\langle}

\newcommand{\intsigma}{\mathring\Sigma}

\def\d{\mathrm{d}}

\newcommand{\lam}{\lambda}
\newcommand{\al}{\alpha}
\newcommand{\gam}{\gamma}
\newcommand{\cc}{\mathrm{c}}

\newcommand{\eps}{\varepsilon}
\newcommand{\bdelta}{\boldsymbol{\delta}}
\newcommand{\oH}{\overline{\mathrm{H}}^1}
\newcommand{\ii}{\mathrm{i}}
\newcommand{\ff}{\mathbf{F}^{K, Q_1}_{p,q}}

\newcommand{\idelta}{\delta^{\gam_i}_{\lam_i,\xi_i}}

\newcommand{\rH}{\mathrm{H}}
\newcommand{\rL}{\mathrm{L}}

\newcommand\mytop[2]{\genfrac{}{}{0pt}{}{#1}{#2}}

\numberwithin{equation}{section}

 \def\XXint#1#2#3{{\setbox0=\hbox{$#1{#2#3}{\int}$ } \vcenter{\hbox{$#2#3$ }}\kern-.6\wd0}}

\def\Dc{\mathscr{D}}

\begin{document}
	\title{Blow-up solutions for mean field equations with non-quantized singularities on Riemann surfaces with boundary}
	\author{\medskip  {\sc Mohameden Ahmedou}, \ \
		{\sc Zhengni Hu}, \ \ 
		{\sc Miaomiao Zhu }
	}

	\date{\today}
	\maketitle

	\begin{abstract}
		We study mean field equations with singular sources on a compact Riemann surface with boundary $(\Sigma,g)$, subject to homogeneous Neumann boundary conditions:
		\[
		\begin{cases}
			-\Delta_g v
			= \rho\!\left( \dfrac{V e^{v}}{\int_\Sigma V e^{v}\,\d v_g}
			- \dfrac{1}{|\Sigma|_g}\right)
			- \displaystyle\sum_{\xi\in Q} \dfrac{\varrho(\xi)}{2}\,\gam(\xi)
			\left(\bm{\delta}_{\xi}- \dfrac{1}{|\Sigma|_g}\right)
			& \text{in }\Sigma,\\[6pt]
			\partial_{\nu_g} v = 0
			& \text{on }\partial\Sigma.
		\end{cases}
		\]
		Here, $V$ is a smooth positive function, $\rho$ is a non-negative parameter,  $Q\subset\Sigma$ is a finite set of prescribed singular points, and the singular weights satisfy
		$\gam(\xi)\in(-1,+\infty)\setminus(\mathbb{N}\cup\{0\})$.
		The coefficients are given by $\varrho(\xi)=8\pi$ for $\xi \in\Sigma\setminus\partial\Sigma$
		and $\varrho(\xi)=4\pi$ for $\xi \in\partial\Sigma$.
		
		We construct blow-up solutions in the non-quantized singular regime, including purely singular and mixed singular-regular blow-up cases, with parameters approaching resonant values.
		The construction is achieved via a Lyapunov-Schmidt reduction under suitable stability assumptions.

		
		\hspace{2em}
		
		\noindent{\bf MSC 2020:} Primary: 35J25; Secondary: 35B40, 35B44, 58J05
		
		\noindent	{\bf Key words: } Singular mean field equations, Blow-up phenomena, Lyapunov-Schmidt reduction, Riemann surfaces with boundary 
	\end{abstract}

	
	\section{Introduction}
	Let $(\Sigma, g)$ be a compact Riemann surface with the interior $\intsigma$ and smooth boundary $\partial\Sigma$. For simplicity, we assume the area of $\Sigma$ is unit, i.e. $|\Sigma|_g=1$. We consider the following  mean field equations with singular sources: 
	\begin{equation}\label{eq:s_mf}
		\left\{
		\begin{aligned}
			-\Delta_g v
			&=\rho\left(\frac{V e^{v}}{\int_{\Sigma} V e^{v}\, \mathrm{d}v_g}-1\right)
			-\sum_{\xi\in Q}\frac{\varrho(\xi)}{2}\,\gam(\xi)(\bm{\delta}_{\xi}-1)
			&&\text{in }\Sigma,\\
			\partial_{\nu_g} v&=0 &&\text{on }\partial\Sigma.
		\end{aligned}
		\right.
	\end{equation}
	where $\Delta_g$ is the Laplace-Beltrami operator, $\d v_g$ is the area element of $(\Sigma,g)$,  $\nu_g$ denotes the outward unit normal vector along $\partial \Sigma$,  $V$ is a smooth positive function,  $\rho$ is a non-negative parameter,  $Q$ is a given finite subset of $\Sigma$,  $\gam(\xi)\in (0, +\infty)\setminus(\N\cup\{ 0\})$ for all $\xi\in Q$, and  $\bm{\delta}_\xi$ is the Dirac measure on $\Sigma$ concentrated at $\xi$. The coefficients  $\varrho(\xi)=
	8\pi $  if $\xi\in \intsigma$ and $
	4\pi $ if $\xi\in \partial\Sigma. $


	
	Singular mean field equations arise naturally in several areas of physics and mathematics. They appear as mean field limits of two-dimensional turbulent Euler flows \cite{Caglioti1992, Caglioti1995, Chanillo1994, Kiessling1993}, and as fundamental models for self-dual vortices in the Electroweak and Chern-Simons gauge theories \cite{AmbjornOlesen1988, SpruckYang1992, Tarantello1996, Yang2001}. In conformal geometry, they are closely related to the problem of prescribing Gaussian curvature on surfaces, both with conical singularities \cite{Troyanov1991} and in the regular case \cite{KazdanWarner1974, ChangYang1987, ChangYang1988}. A vast literature is devoted to the mean field equation \eqref{eq:mf}, addressing existence, uniqueness, and blow-up behavior; see, for instance, \cite{Bartolucci2013, Bartolucci2019, BartolucciDeMarchis2015, BartolucciDeMarchisMalchiodi2011, BartolucciChenLinTarantello2004, Bartolucci2018, BartolucciLin2014, BartolucciMalchiodi2013, BartolucciTarantello2002, BartolucciTarantello2017, FangLai2016, WeiZhang2018, Zhang2009, CCL2015} and the references therein.
	
	
	In the regular case $Q=\emptyset$, the asymptotic behavior and existence of blow-up solutions have been extensively studied; see, for example, \cite{brezis_uniform_1991, CL2002, Li1999, ma_convergence_2001, Nagasaki1990AsymptoticAF, PT2001, EF2014, Esposito2005, del_pino_singular_2005, ABH2024KS}.   
	For the singular problem, a blow-up point $\xi$ can be divided into three types: 
	regular (i.e., $\xi\notin Q$), non-quantized (i.e., $\gam(\xi)\notin \mathbb{N}_0:= \mathbb{N}\cup \{ 0\}$), 
	and quantized (i.e., $\gam(\xi)\in \mathbb{N} $).   The analysis is substantially more delicate in this setting. The asymptotic behavior near regular and non-quantized singularities has been studied in \cite{BartolucciChenLinTarantello2004, Bartolucci2002Liouville, CCL2015, Zhang2009}, where blow-up solutions exhibit single-bubble profiles after rescaling. Concerning existence results in the non-quantized singular case, Esposito \cite{Esposito2005SIAM} constructed blow-up solutions for Liouville-type equations with Dirichlet boundary conditions and positive singular weights. At quantized singularities, the spherical Harnack inequality may fail, making the problem more challenging; nevertheless, substantial progress has been achieved in recent years \cite{Wu2023, WeiZhang2022, WeiZhang2021, KuoLin2016, DAprileWei2020, BartolucciTarantello2017, DA-Wei-Zhang2024}.

	Denote
	\begin{equation}
		\label{eq:hQ}
		h_Q:=\sum_{\xi \in Q} \frac{\varrho(\xi)}{2}\gam(\xi)\, G^g(\cdot, \xi).
	\end{equation}
	Here $G^g(\cdot,\cdot)$ denotes the Green function associated with the Neumann Laplacian; see Section \ref{sec:pre} for details.
	It follows that $-\Delta_g h_Q= \sum_{\xi\in Q} \frac{\varrho(\xi)}{2}\gam(\xi) (\bm{\delta}_\xi -1)$.  
	Setting $u(x)=v(x)+h_Q(x)$ and $K(x)=V(x)\, e^{-h_Q(x)}$, we obtain that $u$ solves equation~\eqref{eq:s_mf}, where $K$ is a non-negative smooth function whose zero set is precisely $Q$.

	The  mean field equations with singular sources~\eqref{eq:s_mf} can be reformulated as follows: 
	\begin{equation}
		\label{eq:mf} 
		\left\{\begin{aligned}
			-\Delta_g u &=\rho  \Big( \frac{Ke^u}{\int_{\Sigma} Ke^ u \, \d v_g} -1\Big)  && \text{ in } \Sigma,\\
			\partial_{\nu_g} u&=0 && \text{ on } \partial\Sigma. 
		\end{aligned}\right.
	\end{equation}

	Since the solution space of \eqref{eq:mf} is invariant under adding  any constant, 
	we consider the solutions in a subspace of $H^1(\Sigma)$ with average $0$, 
	$ \oH:= \{u\in H^1(\Sigma): \int_{\Sigma} u \, \d v_g =0\}. $
	The corresponding Euler-Lagrange functional is given by:
	\[ J_{\rho}(u)= \frac 1 2 \int_{\Sigma} |\nabla u|^2_g \, \d v_g -\rho  \ln \int_{\Sigma} K e^u \, \d v_g, \text{ for all } u\in \oH. \]


	It is well known that blow-up phenomena can occur only when the parameter $\rho$ belongs to the so-called set of resonant   values. Given a  singular set $Q \subset \Sigma$ with $|Q| < \infty$, the set of resonant values is defined as  
	\begin{equation}
		\label{eq:def_critical_g}
		\mathcal{G} := \Big\{ 4\pi(2p + q) + \sum_{\xi  \in Q_1} (1 + \gam(\xi ))  \varrho(\xi )> 0 : Q_1 \subset Q,\; p, q \in \mathbb{N} \cup \{0\} \Big\},
	\end{equation}
	as established in mean field equations by blow-up analysis (see~\cite{Bartolucci2002Liouville} for closed surfaces, and with minor modifications for Riemann surfaces with boundary).
	
	
	It is natural to ask whether blow-up phenomena occur for all $\rho_*\in\mathcal{G}$. 
	In this paper, we establish sufficient conditions for the existence of sequences of blow-up solutions to \eqref{eq:mf} as the parameter goes to $\rho_*\in \cG$. 
	While the case where blow-up occurs only at regular points was studied in \cite{ABH2024KS} under a $C^1$-stability assumption on a reduced function, we construct  purely singular and mixed regular-singular blow-up solutions on Riemann surfaces with boundary in the non-quantized regime, including the presence of negative singular weights, by a Lyapunov-Schmidt reduction. 
	The case of quantized singularities, which may involve non-simple blow-up behavior, is left for future study.
	
	Although the Lyapunov-Schmidt reduction is a standard tool for constructing blow-up solutions, in the presence of singular sources additional difficulties arise from boundary blow-up points and non-quantized singularities, requiring the introduction of suitable projected bubbles and refined estimates.

	When blow-up occurs only at singular points, our first main result is stated below.
	
	\begin{theorem}\label{thm:main0}
		Let $Q_1\subset Q$ be a nonempty subset, and set $N:=|Q_1|>0$.  
		Then there exists a family of blow-up solutions $v_\eps$ to \eqref{eq:s_mf} with
		\[
		\rho_\eps \to \rho_* := \sum_{\xi\in Q_1} (1+\gam(\xi))\,\varrho(\xi)
		\quad \text{as } \eps \to 0,
		\]
		such that $v_\eps$ blows up precisely at the points in $Q_1$.

		Moreover, up to a subsequence, the following estimates hold:  	as $\eps\to 0$
		$$ \rho_\eps \frac{V e^{v_\eps} }{\int_\Sigma V e^{v_\eps} \, \d v_g}  \, \d v_g  \stackrel{*}{\rightharpoonup}   \sum_{\xi \in Q_1} (1+\gam(\xi ))\varrho(\xi) \bm{\delta}_{\xi} \qquad \text{(weak-$*$ convergence)}$$
		and  
		\[  J_{\rho^\eps}(u_\eps)\to \rho_* -\rho_* \ln  \Big(\frac {\rho_*}{8}\Big)+ 
		\sum_{\xi \in Q_1} 2 (1+\gam(\xi))\varrho(\xi) \ln (1+\gam(\xi)),  \] 
		where $u_\eps(x)= v_\eps(x) + h_Q(x)$ and $K(x) = V(x) e^{-h_Q(x)}$. 
	\end{theorem}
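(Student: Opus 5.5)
The construction is a Lyapunov--Schmidt reduction in which the bubble positions are frozen at the singular points, so no point parameters have to be solved for. For each $\xi\in Q_1$ set $\alpha_\xi:=1+\gam(\xi)>0$ and work in isothermal coordinates $y_\xi$ centered at $\xi$; for $\xi\in\partial\Sigma$ these are boundary-flattening coordinates onto a half-disk. In these coordinates $K(x)=|y_\xi(x)|^{2\gam(\xi)}\,k_\xi(x)$ with $k_\xi$ smooth and positive near $\xi$. The local model is the singular Liouville bubble
\[
w_{\lambda,\xi}(y)=\ln\frac{8\alpha_\xi^2\lambda^{2\alpha_\xi}}{(1+\lambda^{2\alpha_\xi}|y|^{2\alpha_\xi})^2},
\qquad -\Delta w=|y|^{2\gam(\xi)}e^{w},
\]
whose total mass is $8\pi\alpha_\xi$ in the plane and $4\pi\alpha_\xi$ on the half-plane, i.e.\ $\varrho(\xi)\alpha_\xi$ in both cases. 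The Neumann condition on the half-plane holds automatically by radial symmetry.

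I glue these bubbles with a cut-off and take the Neumann projection $PW_\xi$, defined by $-\Delta_g PW_\xi=\chi_\xi|y|^{2\gam}e^{w}-\text{avg}$, $\partial_\nu PW_\xi=0$ and zero mean. The expansion I aim for is
\[
PW_\xi=\chi_\xi\big(w_{\lambda,\xi}-\ln(8\alpha_\xi^2\lambda^{2\alpha_\xi})\big)+\varrho(\xi)\alpha_\xi H^g(\cdot,\xi)+O(\lambda^{-2\alpha_\xi}),
\]
where $H^g$ is the regular part of the Neumann Green function. I tie all scales to one parameter $\eps$ through $\lambda_\xi^{2\alpha_\xi}=d_\xi\eps^{-2}$ with $d_\xi>0$, and take the ansatz $W=\sum_{\xi\in Q_1}PW_\xi$. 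Since $Q_1$ is fixed, the only free parameters are $d=(d_\xi)_\xi$, or equivalently $\rho$ together with $d$ modulo scaling.

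The approximate kernel of the linearized operator is smaller than in the regular case. Because $\gam(\xi)\notin\N\cup\{0\}$, the linearization of $-\Delta v=|y|^{2\gam}e^{w}v$ has bounded kernel spanned only by the radial element $\partial_\lambda w$. The translation modes are absent, since they would require $\alpha_\xi\in\N$. This is exactly where the non-quantized hypothesis is used.

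The reduction then runs in three steps. First, I estimate the error $R=\Delta_g W+\rho\,Ke^W/\!\int Ke^W-\rho$ in a weighted $L^p$ norm. On each bubble region the factors $k_\xi(x)e^{\text{regular part}}$ have to be matched to constants; this fixes the relation between $d_\xi$ and $\rho_\eps$ and forces $\rho_\eps-\rho_*\to0$. Second, I prove invertibility of the linearized operator $L$ on the orthogonal complement of $\{P\partial_\lambda W_\xi\}$, uniformly in $\eps$ with a logarithmic loss, by the usual contradiction and blow-up argument. The argument rescales around each $\xi$ and uses the non-degeneracy just described; on the half-plane it applies the same classification to even extensions. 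One further difficulty is specific to the mean field setting: the nonlocal term $\int Ke^W$ produces a constant direction, and I handle it by working in $\oH$. Third, a contraction mapping gives $\phi$ with $\|\phi\|=O(\eps^{\sigma}|\ln\eps|)$ solving the projected equation.

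The reduced problem consists of $N$ scalar equations, one coefficient $c_\xi$ for each $\xi\in Q_1$. Rather than use a degree or stability argument, I choose $\rho=\rho_\eps$ and the ratios of the $d_\xi$ so as to kill all $c_\xi$. The expected mechanism is that the reduced energy has the form
\[
J_\rho(W+\phi)=F_0-(\rho-\rho_*)\ln\eps^{-2}+\sum_\xi A_\xi\ln d_\xi+o(1),
\]
and this makes the $c_\xi$ equations solvable by an intermediate value argument, or as a nondegenerate critical point in the $d$ variables. I expect this step, deriving the precise $\partial_{d_\xi}$ expansion and showing that the radial directions decouple, to be the main obstacle. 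The radial mode is sensitive to the mass constraint, and its contribution is of the same order as the $\rho-\rho_*$ correction. I would first try a single bubble with $N=1$, where $\rho_\eps$ alone absorbs the one equation, and then handle general $N$ by adjusting the $d_\xi$.

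Finally, I read off the conclusions. Weak-$*$ convergence follows because $\rho Ke^{u_\eps}/\!\int Ke^{u_\eps}$ concentrates as $\sum_\xi\varrho(\xi)\alpha_\xi\bm\delta_\xi$. The energy limit comes from evaluating $J$ on $W$: the Dirichlet term and the log term each produce a $\ln\lambda$ divergence, and these cancel at $\rho_*$. What remains are the constants $\rho_*-\rho_*\ln(\rho_*/8)+\sum 2\varrho\alpha_\xi\ln\alpha_\xi$, coming from $\ln(8\alpha_\xi^2)$ and from $\int\ln(1+|y|^{2\alpha})|y|^{2\gam}e^{w}$, together with Green function terms. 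I would verify the latter cancel or are absorbed by the normalization $K=Ve^{-h_Q}$ as stated.
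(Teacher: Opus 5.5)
There is a genuine gap, and it is exactly the step you flag as the main obstacle. After projecting out the $N$ radial directions, you must show that some $\rho$ and some ratios of the $d_\xi$ make all $N$ coefficients $c_\xi$ vanish. The proposal only offers an expected form of the reduced energy, and that form cannot do this: a function linear in the $\ln d_\xi$ has no critical point in $d$.

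Redo the leading-order computation with $s_\xi:=\ln d_\xi$ (up to $\xi$-dependent constants) and $m_\xi:=(1+\gam(\xi))\varrho(\xi)$. You get a reduced energy $\approx \sum_\xi m_\xi s_\xi-\rho\ln\sum_\xi m_\xi e^{s_\xi}+2(\rho-\rho_*)\ln\eps+\mathrm{const}$. Your $(\rho-\rho_*)\ln\eps^{-2}$ term and the linear part ($A_\xi=m_\xi$) are right. What you are missing is the coupling $-\rho\ln\sum_\xi m_\xi e^{s_\xi}$, which comes from $\rho\ln\int_\Sigma Ke^W\,\d v_g$ and is what produces critical points. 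Its $s$-gradient vanishes only for $\rho=\rho_*$, and then along the whole line of common rescalings. Modulo that line, the leading-order system in $(\rho,\text{ratios})$ is nondegenerate, so your route can probably be closed by a degree or implicit-function argument. But that argument, with control of the $\phi$-dependent remainders and their continuity in the parameters, is the existence proof itself, and it is not in the proposal.

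The idea you are missing is the one the paper uses: for $p+q=0$ no finite-dimensional problem arises at all.
\begin{itemize}
\item The paper does not prescribe $\rho$. It solves the Liouville form $-\Delta_g u=\eps^2Ke^u-\overline{\eps^2Ke^u}$ with $\eps$ as the only parameter. It fixes the scales in advance by $\lam_i^{-2(1+\gam_i)}=d_i\eps^2$ with $d_i$ from \eqref{def:d_i}, which is what makes $\|\cR\|_s$ small. At the end it reads off $\rho_\eps:=\eps^2\int_\Sigma Ke^{u_\eps}\,\d v_g\to\rho_*$.
\item In this normalization the dilations are not approximate kernel elements, since $\lam_i\partial_{\lam_i}P\delta_i\approx\chi_i\,(Z^0_i+2(1+\gam_i))$ differs from the local kernel by a constant.
\item In Lemma~\ref{lem:invertible}, the radial coefficient $a_i$ of the blow-up limit is removed in two steps. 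Testing with $PZ^0_i$ gives $\int_\Sigma\chi_i|y_{\xi_i}|^{2\gam_i}e^{-\varphi_i}e^{\delta_i}\phi\,\d v_g=o(|\ln\eps|^{-1})$. Testing with $P\delta_i$ then neutralizes its $4(1+\gam_i)\ln\lam_i$ part, and the $-2\ln(1+|y|^{2(1+\gam_i)})$ part leaves $\varrho(\xi_i)a_i=0$.
\item Together with the absence of translation modes, which you correctly trace to non-quantization, this makes $\cL$ invertible on all of $\oH$ with a $|\ln\eps|$ loss. Since $K_\xi=\{0\}$, a single contraction gives an exact solution for every small $\eps$.
\end{itemize}
The radial kernel you are fighting is an artifact of fixing $\rho$. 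In the mean-field linearization, the nonlocal term cancels the constant offset along the common rescaling $\sum_\xi(1+\gam(\xi))^{-1}\lambda_\xi\partial_{\lambda_\xi}PW_\xi$. Trading $\rho$ for the free parameter $\eps$ absorbs exactly that degeneracy.

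Two smaller points. First, the energy constant will not come out as you plan.
\begin{itemize}
\item The $K$-dependent Green-function terms cannot cancel. A solution of \eqref{eq:mf} for $K$ also solves it for $cK$ with the same $\rho$, while $J_\rho$ shifts by $-\rho\ln c$. With $p=q=0$, Lemma~\ref{lem:key_energy_app} indeed leaves the extra constant $-\frac12\ff$ evaluated at $\xi=Q_1$.
\item The $\int\ln(1+|y|^{2(1+\gam)})|y|^{2\gam}e^{w}$ term contributes $-\rho_*$, not $+\rho_*$; compare \eqref{eq:Dirichlet}.
\item Cancelling the $\ln\eps$ terms needs $(\rho_\eps-\rho_*)\ln\eps\to0$, not merely $\rho_\eps\to\rho_*$.
\end{itemize}
The paper's own proof of this theorem reaches the stated constant only by dropping the $\ff$ term, which Theorem~\ref{thm:main} keeps, and by flipping the sign of $\rho_*$ relative to \eqref{eq:Dirichlet}.

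Second, your expansion of $PW_\xi$ should subtract $\ln(8\alpha_\xi^2\lambda^{-2\alpha_\xi})$, not $\ln(8\alpha_\xi^2\lambda^{2\alpha_\xi})$. For $\gam(\xi)>0$ it also carries a constant of order $\lambda^{-2}\gg\lambda^{-2\alpha_\xi}$ (Lemma~\ref{lem:proj_bubble}).
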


	Before stating the second main result, we introduce the corresponding reduced function and define the concept of $ C^1 $-``stable'' critical points.

	For  any fixed $Q$,  we rewrite the resonant value set \eqref{eq:def_critical_g} as follows: 
	\[ \cG:=\{ n_m: m\in\N\text{ with }  n_m-n_{m'}>0 \text{ for  any } m> m'\}. \]
	Given $m\in\N$, we set $\rho_*:=4\pi n_m$, $\cP(Q):=\{ B: B \subset Q\}$ is the power set of $Q,$ and  
	\[ \cI_m:=\Big\{ (p,q, Q_1)\in \N\times \N \times\cP(Q): 4\pi n_m= 4\pi(2p+q)+ \sum_{\xi \in Q_1} (1+\gam(\xi ))\varrho(\xi)  \Big\}.\]
	Denote  for any $(p,q, Q_1)\in \cI_{m}$ with $N=|Q_1|$, 
	$$\gam_j:=\begin{cases}
		0 & \text{ for } 1\leq j\leq  p+q,\\
		\gam(\xi^*_j) &\text{ for }p+q< j\leq  p+q+N.
	\end{cases}$$


	

	\begin{definition}  A critical point $x\in M$ of a $C^1$-function $F: M \to \R$, defined on a manifold $M$, is said to be $C^1$-stable if for every neighborhood $U$ of $x$ in $M$ there exists $\eps>0$ such that the following statement holds: 
		
		{	If $G: U \to \R$ is a $C^{1}$-function with $\|F-G\|_{C^1(U)}<\eps$, then G has a critical point in $U$.}
	\end{definition}
	
	In the case where the blow-up set consists of both singular and regular points, 
	the presence of regular blow-up points produces nontrivial kernel directions in the linearized operator, 
	so that it is no longer naturally invertible. 
	To handle the effect of the regular blow-up points, 
	we therefore impose a stability condition on the reduced functional  introduced in Section \ref{sec:pre}.  
	
	A sufficient condition for the existence of blow-up solutions to the singular mean field equation is given by the following result.
	\begin{theorem}
		\label{thm:main}
		Given $m\in \N$, for any $(p,q, Q_1)\in \cI_m$ with $N=|Q_1|>0$, suppose that the reduced function $\ff$ defined by \eqref{def:reduce_fun_0} has a $C^1$-stable critical point $\xi^{0,*}:=(\xi^*_1,\ldots,\xi^*_{p+q})$ and 
		$$ \gam_*:=\max \{\gam_i: i=1,\ldots, p+q+N\}<1 .$$
		Then, 
		there exists a sequence of blow-up solutions  $v_\eps$ of \eqref{eq:s_mf} with parameter $\rho_\eps\to \rho_*:= 4\pi n_m$  and $\xi^{0,\eps}:=(\xi^\eps_1,\ldots,\xi^\eps_{p+q})\to \xi^{0,*}$ as $\eps\to 0 $, 
		which blows up exactly at   $\xi_1^*, \ldots, \xi^*_{p+q+N}$ satisfying that  $Q_1=\{ \xi_{p+q+1}^*,\ldots,\xi^*_{p+q+N}\}$. 
		
		Moreover, up to a subsequence, the following estimates hold:  	as $\eps\to 0$
		\[  \rho_\eps \frac{V e^{v_\eps} }{\int_\Sigma V e^{v_\eps} \, \d v_g}  \, \d v_g  \stackrel{*}{\rightharpoonup}   \sum_{i=1}^{p+q+N} (1+\gam_i )\varrho(\xi^*_i) \bm{\delta}_{\xi^*_i};  \]
		and  
		\[  J_{\rho^\eps}(u_\eps)\to \rho_* -\rho_* \ln  \Big(\frac {\rho_*}{8}\Big)+ 
		\sum_{i=1}^{p+q+N} 2 (1+\gam_i)\varrho(\xi_i) \ln (1+\gam_i)-\frac 1 2 \ff(\xi^{0,*}),  \] 
		where $u_\eps(x)= v_\eps(x) + h_Q(x)$ and $K(x) = V(x) e^{-h_Q(x)}$. 
	\end{theorem}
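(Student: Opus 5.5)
We will prove Theorem~\ref{thm:main} by a Lyapunov--Schmidt reduction in $\oH$, working with the equivalent problem \eqref{eq:mf} for $u=v+h_Q$ and $K=Ve^{-h_Q}$.

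\textbf{Ansatz.} We look for $u_\eps=W_{\eps,\xi}+\phi$, where
\[
W_{\eps,\xi}=\sum_{i=1}^{p+q+N}P U_i
\]
is a sum of projected singular bubbles. Near $\xi_i$, in isothermal (or boundary-flattened) coordinates $y_{\xi_i}$, each $U_i$ is
\[
U_i(x)=\ln\frac{8(1+\gam_i)^2\lam_i^{2(1+\gam_i)}}{\bigl(\lam_i^{2(1+\gam_i)}\eps^2+|y_{\xi_i}(x)|^{2(1+\gam_i)}\bigr)^2}.
\]
The map $P$ is the Neumann projection: cut off near $\xi_i$, subtract the mean, and correct by $\tfrac{\varrho(\xi_i)}{2}(1+\gam_i)\,H^g(\cdot,\xi_i)$. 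The point $\xi_i$ is free for $i\le p+q$, lying in $\intsigma$ for the first $p$ indices and on $\partial\Sigma$ for the next $q$. For $i>p+q$ the point is fixed at $\xi_i^*\in Q_1$.

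The heights $\lam_i>0$ are chosen so that the leading-order error cancels. Concretely, $\lam_i^{2(1+\gam_i)}$ should match $K$'s local profile $K(x)\sim h_i(x)|y_{\xi_i}(x)|^{2\gam_i}$ times $e^{\frac{\varrho(\xi_i)}{2}(1+\gam_i)H(\xi_i,\xi_i)+\sum_{j\neq i}\frac{\varrho(\xi_j)}{2}(1+\gam_j)G(\xi_i,\xi_j)}$. This is exactly the combination that defines $\ff$. The parameter $\rho_\eps$ is linked to $\eps$ through
\[
\rho_\eps\int_\Sigma Ke^{W}\,\d v_g\approx \rho_\eps\eps^{-2},
\]
so that $\rho_\eps-\rho_*\to0$.

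\textbf{Error estimate.} Set $R=-\Delta_g W-\rho(\tfrac{Ke^W}{\int Ke^W}-1)$. We will show $\|R\|_{L^p}\lesssim \eps^{\alpha}$ for some $\alpha>0$ and $p>1$ close to $1$. Near a bubble, the mismatch between $K$ and its model $h_i(\xi_i)|y|^{2\gam_i}$ is $O(|y|^{2\gam_i+1})$. After rescaling this yields a power of $\eps$. This is where the hypothesis $\gam_*<1$ enters: it ensures that the $|y|$-linear and $|y|^2$ Taylor terms produce errors $o(1)$ in the needed norms. It also ensures that the regular-bubble translation errors are not dominated by singular terms of order $\eps^{2/(1+\gam_i)}$-type.

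\textbf{Linear theory.} We study the linearized operator $L\phi=-\Delta_g\phi-\rho\,(\tfrac{Ke^W}{\int Ke^W}\phi-\text{mean terms})$. At a singular bubble with $\gam_i\notin\N_0$, the bounded kernel of the limiting operator $-\Delta-|y|^{2\gam}e^{U}$ is spanned by the dilation element only. The dilation direction is controlled by the mean-zero/$\rho$ normalization, exactly as in the regular mean field setting. For regular interior bubbles the kernel also contains two translation elements, and for boundary bubbles one tangential translation element (after even reflection).

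We therefore project onto the orthogonal complement of
\[
Z_{ij}=P\bigl(\chi\,\partial_{\xi_{i,j}}U_i\bigr),\qquad i\le p+q.
\]
The central step is an a priori estimate $\|\phi\|_{L^\infty}\le C|\ln\eps|\,\|L\phi\|_{*}$, proved by contradiction. We rescale around each bubble, pass to limits, and use the non-degeneracy classification for the singular Liouville equation; here $\gam_i\notin\N$ is essential, since otherwise extra angular kernel elements $\mathrm{Re}/\mathrm{Im}\,(z^{1+\gam})$-type appear. Fredholm theory then gives invertibility, and a contraction mapping yields $\phi_{\eps,\xi}$ with $\|\phi\|\lesssim\eps^{\alpha}|\ln\eps|$. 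Moreover $\phi$ depends $C^1$ on $\xi^0$.

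\textbf{Reduced problem and conclusion.} Solving the full equation reduces to the vanishing of the Lagrange multipliers $c_{ij}$. This is equivalent to $\xi^0$ being a critical point of $\tilde J(\xi^0)=J_{\rho_\eps}(W+\phi)$. We will expand
\[
\tilde J=\rho_*-\rho_*\ln\tfrac{\rho_*}{8}+\sum_i 2(1+\gam_i)\varrho(\xi_i)\ln(1+\gam_i)-\tfrac12\ff(\xi^0)+o(1)
\]
in $C^1$ uniformly on compact sets. The $C^1$-stability of $\xi^{0,*}$ then provides a critical point $\xi^{0,\eps}\to\xi^{0,*}$.

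Boundary points are handled in Fermi coordinates. The $C^1$ expansion for them requires that the tangential derivative of the boundary Robin function be well-behaved, and that the nonzero boundary curvature contribute only lower-order terms.

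Finally, the weak-$*$ convergence follows from the bubble profiles, each carrying mass $(1+\gam_i)\varrho(\xi_i)$. The energy limit is the constant term of the expansion above.

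\textbf{Main obstacle.} The hardest step will be the $C^1$ expansion of the reduced energy, specifically the derivative in $\xi$, in the presence of both singular (fixed) and regular (moving) bubbles. The interaction errors have orders $\eps^{\min(1,\dots)/(1+\gam_j)}$ that must stay smaller than the $O(1)$ gradient of $\ff$. This is where $\gam_*<1$ is needed. We will try first to estimate the derivative through the Pohozaev-type identity for $c_{ij}$, rather than differentiating the energy directly.
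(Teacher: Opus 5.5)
Your overall plan matches the paper's: projected bubbles with $\lam_i$ fixed through $d_i$, an approximate kernel coming only from translations of the $p+q$ regular bubbles (since $\gam_i\notin\N$), a blow-up proof of a linear estimate with $|\ln\eps|$ loss, a contraction with $C^1$ dependence, a $C^1$ expansion, and $C^1$-stability. The gap is in the reduction step: you never settle whether $\rho$ or $\eps$ is the free parameter, and under either choice the step fails as you state it.

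\emph{Prescribed $\rho_\eps$.} Suppose you solve the projected mean-field problem with $\rho_\eps$ fixed. Its linearization then contains the term from differentiating $\int_\Sigma Ke^u\,\d v_g$. With that term, $\sum_i(1+\gam_i)^{-1}PZ^0_i=-\eps\partial_\eps W$ is an approximate kernel element. On the $l$-th bubble the local part of the operator gives $-2\chi_l|y_{\xi_l}|^{2\gam_l}e^{-\varphi_l}e^{\delta_l}$. The nonlocal part gives $+2\rho_*^{-1}\sum_i(1+\gam_i)\varrho(\xi_i)=+2$ times the same function, so the two cancel to leading order. No mean-zero normalization removes this direction, so your linear estimate fails on it. Then $\eps$ would have to become an extra reduction variable, which is a prescribed-$\rho$ problem the theorem neither needs nor gives hypotheses for.

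\emph{$\rho$ determined by the solution.} Suppose instead $\rho(\xi^0):=\eps^2\int_\Sigma Ke^{W+\phi}\,\d v_g$. Then
\[
J_{\rho(\xi^0)}(W+\phi)=E_\eps(W+\phi)+\rho(\xi^0)-\rho(\xi^0)\ln\rho(\xi^0)+2\rho(\xi^0)\ln\eps ,
\]
so $\nabla_{\xi^0}\tilde J$ carries the extra term $(2\ln\eps-\ln\rho)\nabla_{\xi^0}\rho$, which need not vanish at solutions. Freezing $\rho_\eps$ at a constant instead leaves the term $(1-\rho_\eps/\rho(\xi^0))\nabla_{\xi^0}\rho$. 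Either way, critical points of $\tilde J$ are not the zeros of the $c_{ij}$.

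\emph{What the paper does.} It works with \eqref{eq:main_L} at fixed $\eps$. Then $\eps\partial_\eps W$ is not an approximate kernel element. The dilation coefficients $a_i$ in the blow-up argument are killed by testing with $PZ^0_i$ and $P\delta_i$: the $4(1+\gam_i)\ln\lam_i$ growth of $P\delta_i$ forces $a_i=0$, and this is what costs the factor $|\ln\eps|$. The reduced functional is $E_\eps(W+\phi)$, for which
\[
\partial_{(\xi_i)_j}E_\eps(W+\phi)=\sum_{i',j'}c^\eps_{i',j'}\langle PZ^{j'}_{i'},\partial_{(\xi_i)_j}(W+\phi)\rangle .
\]
Only after the solution exists are $\rho_\eps:=\eps^2\int_\Sigma Ke^{u_\eps}\,\d v_g$ and $J_{\rho_\eps}(u_\eps)$ computed.

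You also put the hypothesis $\gam_*<1$ in the wrong place. The error estimate does not need it: $\|\cR\|_s=O(\eps^{\iota/(1+\max\{0,\gam_*\})})$ for every $\gam_*$. Comparing errors of size $\eps^{1/(1+\gam_j)}$ with the $O(1)$ gradient of $\ff$ never produces the threshold $1$. The threshold comes from $\partial_{(\xi_i)_j}W\approx\lam_iPZ^j_i$ with $\lam_i\sim\eps^{-1}$ for the moving regular bubbles. The corrections in $\partial_{(\xi_i)_j}\tilde E_\eps-\partial_{(\xi_i)_j}E_\eps(W)$, and in the bound $c^\eps_{i,j}=O(\eps)$, have the form
\[
\lam_i\,\Big\|\eps^2Ke^W-\sum_{l}\chi_l|y_{\xi_l}|^{2\gam_l}e^{-\varphi_l}e^{\delta_l}\Big\|_s\,\|\phi\|\approx\eps^{-1}\eps^{2/(1+\max\{0,\gam_*\})}
\]
up to logarithms and small losses in the exponent. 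This is $o(1)$ exactly when $\gam_*<1$, which is the condition $\iota_0>\frac12$ in Theorem~\ref{thm2_p}. So you need $\|\phi\|=o(\eps^{1/2})$, not merely $\|\phi\|\lesssim\eps^{\alpha}|\ln\eps|$ for some $\alpha>0$. A Pohozaev-type computation of the $c_{ij}$ runs into the same constraint. You also still need the identity for $\partial_{(\xi_i)_j}E_\eps(W+\phi)$ to invoke $C^1$-stability, since that hypothesis concerns $C^1$-perturbations of the function $\ff$, not vector fields.

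Two smaller points.
\begin{enumerate}
\item The coefficient of $H^g(\cdot,\xi_i)$ in $P\delta_i$, and of $H^g,G^g$ in your choice of $\lam_i$, is $(1+\gam_i)\varrho(\xi_i)$ (resp.\ $(1+\gam_j)\varrho(\xi_j)$), not $\frac{\varrho}{2}(1+\gam)$. Each bubble carries mass $(1+\gam_i)\varrho(\xi_i)$ and $-\Delta_gG^g=\bm{\delta}_\xi-1$. With half the coefficient the constant part of the local error is not cancelled, and the reduced function is not $\ff$.
\item When you carry out the energy computation you will get the constant $-\rho_*-\rho_*\ln(\rho_*/8)$. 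This is consistent with the $(-1+3\ln 2)\rho_*$ in \eqref{eq:Dirichlet}, and for $\rho_*=8\pi$ with the classical value $-8\pi(1+\ln\pi)$. The $+\rho_*$ in the statement, and the $(1+3\ln 2)\rho_*$ in the paper's final computation, appear to be a sign slip.
\end{enumerate}
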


	\begin{remark}
		For technical reasons, we restrict to the regime $\gam_*<1 $. 
		When $\gam_*>1$, the remainder term produced in the Lyapunov-Schmidt reduction decays too slowly with respect to the small parameter $\eps>0$. 
		As a consequence, the perturbative argument used to locate a critical point of the reduced functional fails. 
	\end{remark}

	We can extend the homogeneous Neumann boundary condition in \eqref{eq:s_mf} to be a non-homogeneous one, which is related to the prescribed   Gaussian curvature and boundary geodesic curvature problem. 
	Suppose that $(\Sigma, g)$ is a compact Riemann surface with smooth boundary, where $g$ is a smooth conformal metric. 
	Define  
	$ g_0:= \rho_*  \frac{  e^{-h_Q}e^w}{\int_{\Sigma} 2\tilde{K} e^{-h_Q} e^w \, \d v_g} g,$
	where $h_Q$ is defined by \eqref{eq:hQ}, $\rho_*>0$, and $\tilde{K}$ is a given  smooth, positive-valued  function on $\Sigma$. 
	
	Suppose that $g_0$ is a conformal metric with prescribed Gaussian curvature  $\tilde{K}$ and vanishing geodesic curvature on the boundary.
	Using the Gauss-Bonnet formula (see, for instance, \cite[Proposition 1]{Troyanov1991}), we have
	$ \frac 1 {2\pi}\int_{\Sigma} K_g  \, \d v_{g} + \frac 1 {2\pi}\int_{\partial\Sigma} k_g \, \d s_{g} =\bm{\chi}(\Sigma), $
	and 
	$  \frac 1 {2\pi}\int_{\Sigma} \tilde{K}\, \d v_{g_0}= \bm{\chi}(\Sigma)-\sum_{\xi\in Q\cap\intsigma} \gam(\xi)-\sum_{\xi \in Q\cap \partial\Sigma}\frac{\gam(\xi)}{2},$
	where $\bm{\chi}(\Sigma)$ is  the Euler characteristic of $\Sigma$, $K_g$ is the constant Gaussian curvature of $(\Sigma, g)$, $k_g$ is the geodesic curvature of $\partial\Sigma$ with respect to $g$ and $\d s_g$ is the line element of $\partial\Sigma$ induced by  $g$. It follows that 
	$ \rho_*= 4\pi (   \bm{\chi}(\Sigma) - \sum_{\xi \in Q\cap\intsigma}\gam(\xi)-\sum_{\xi \in Q\cap\partial\Sigma}\frac{\gam(\xi)}{2}). $
	Following the arguments in \cite{BartolucciDeMarchisMalchiodi2011,Troyanov1991}, 
	we obtain the following mean field type equation, 
	\begin{equation}\label{eq:Gauss_mf}
		\left\{  \begin{aligned}
			-\Delta_g w &= \rho \Big( \frac{2\tilde{K}  e^{-h_Q}e^w}{\int_{\Sigma}2 \tilde{K} e^{-h_Q} e^w \d v_g} -1 \Big)+ 2\int_{\partial\Sigma} k_g \, \d s_g && \text{ in } \Sigma,  \\
			\partial_{\nu_g} w&= - 2k_g && \text{ on }\partial\Sigma, 	\\
		\end{aligned}\right.
	\end{equation}
	where the parameter $\rho>0$. 
	
	We introduce a smooth auxiliary function $h$  solving 
	\begin{equation*}
		\left\{ \begin{aligned}
			-\Delta_g h& = 2\int_\Sigma k_g \,\d s_g && \text{ in }\Sigma, \\
			\partial_{\nu_g} h&=  -2k_g && \text{ on }\partial\Sigma, 
		\end{aligned}\right.
	\end{equation*} 
	and define $v:=w-h$ and $V:= 2 \tilde{K} e^{-h_Q+h}$. Then equation \eqref{eq:Gauss_mf} reduces to a mean field equation with homogeneous Neumann boundary condition of the form \eqref{eq:s_mf}.
	
	Consequently, based on Theorem \ref{thm:main}, we have the following corollary for the geometric boundary condition problem: 
	\begin{corollary}
		Given $m\in \N$, for any $(p,q, Q_1)\in \cI_m$ with $N=|Q_1|>0$, let $K= 2 \tilde{K} e^{-h_Q+h}$. 
		Suppose that the reduced function $ \ff$ defined by \eqref{def:reduce_fun_0} has a $C^1$-stable critical point $\xi^{0,*}:=(\xi^*_1,\ldots,\xi^*_{p+q})$ and 
		$$\gam_*:= \max\{ \gam_i: i=1,\ldots, p+q+N\}<1.$$ 
		Then, 
		there exists a sequence of blow-up solutions  $w_\eps$ of \eqref{eq:Gauss_mf} with parameter $\rho_\eps\to \rho_*:= 4\pi n_m$  and $\xi^{0,\eps}:=(\xi^\eps_1,\ldots,\xi^\eps_{p+q})\to \xi^{0,*}$ as $\eps\to 0 $, 
		which blows up at   $\xi_1^*, \ldots, \xi^*_{p+q+N}$ with $Q_1=\{ \xi_{p+q+1}^*,\ldots,\xi^*_{p+q+N}\}$. 
		
		Moreover, up to a subsequence  the following estimate holds:  as $\eps\to 0$
		\[  \rho_\eps \frac{ 2 \tilde{K} e^{-h_Q}e^{w_\eps}}{\int_\Sigma  2 \tilde{K} e^{-h_Q}e^{w_\eps}\, \d v_g}  \, \d v_g  \stackrel{*}{\rightharpoonup}   \sum_{i=1}^{p+q+N} (1+\gam_i )\varrho(\xi^*_i) \bm{\delta}_{\xi^*_i}.  \]
	\end{corollary}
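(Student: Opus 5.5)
The corollary follows from Theorem \ref{thm:main} once \eqref{eq:Gauss_mf} is rewritten in the form \eqref{eq:s_mf}; the only work is to justify the change of variables and to keep track of the weights.

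\emph{Step 1: the auxiliary function $h$.} The Neumann problem defining $h$ is solvable because its compatibility condition holds. With $|\Sigma|_g=1$ and the outward normal $\nu_g$,
\[
\int_\Sigma -\Delta_g h\,\d v_g=-\int_{\partial\Sigma}\partial_{\nu_g}h\,\d s_g ,
\]
and both sides equal $2\int_{\partial\Sigma}k_g\,\d s_g$. Hence $h$ exists, is smooth by elliptic regularity, and is unique up to an additive constant. We fix the normalization $\int_\Sigma h\,\d v_g=0$.

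\emph{Step 2: transformation of the equation.} Set $v=w-h$. Subtracting the equation for $h$ from \eqref{eq:Gauss_mf}, the constant $2\int_{\partial\Sigma}k_g\,\d s_g$ cancels in the interior and $\partial_{\nu_g}v=0$ on $\partial\Sigma$. Since $e^{w}=e^{h}e^{v}$, the exponential term becomes
\[
\frac{2\tilde K e^{-h_Q}e^{w}}{\int_\Sigma 2\tilde K e^{-h_Q}e^{w}\,\d v_g}=\frac{K e^{v}}{\int_\Sigma K e^{v}\,\d v_g},\qquad K=2\tilde K e^{-h_Q+h},
\]
and this is exactly the weight defined in the statement. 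So $v$ solves \eqref{eq:mf} with this $K$.

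$K$ is smooth, nonnegative, and vanishes exactly on $Q$, with the prescribed local behavior $|x-\xi|^{2\gam(\xi)}$ (interior) or the corresponding boundary behavior. Indeed, $h_Q$ carries all the singular behavior, while $2\tilde K e^{h}$ is a smooth positive function. Define $V:=2\tilde K e^{h}$, the smooth positive factor, so that $K=Ve^{-h_Q}$. Then $\tilde v:=v-h_Q$ solves \eqref{eq:s_mf} with this $V$, which is the setting of Theorem \ref{thm:main}.

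\emph{Step 3: applying Theorem \ref{thm:main}.} All hypotheses are satisfied: the reduced function $\ff$ built from this $K$ has a $C^1$-stable critical point, and $\gam_*<1$. The theorem therefore gives solutions $\tilde v_\eps$ with $\rho_\eps\to\rho_*=4\pi n_m$, concentrating at $\xi_1^*,\dots,\xi^*_{p+q+N}$, and with the stated weak-$*$ limit of $\rho_\eps Ve^{\tilde v_\eps}/\int_\Sigma Ve^{\tilde v_\eps}\,\d v_g$.

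Set $w_\eps:=\tilde v_\eps+h_Q+h$. Then $w_\eps$ solves \eqref{eq:Gauss_mf}, and
\[
Ve^{\tilde v_\eps}=2\tilde K e^{-h_Q}e^{w_\eps}.
\]
The normalized measures therefore coincide, and the convergence statement transfers directly. Blow-up of $w_\eps$ at the same points follows because $h_Q+h$ is bounded away from $Q$. At points of $Q_1$, blow-up is equivalent to concentration of the measure, which is already given.

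\emph{Main obstacle.} The only delicate point is the bookkeeping. One must check that the boundary term and the curvature constant cancel exactly, and that $V$ is smooth and positive even though $h_Q$ has the $\tfrac12$-weighted boundary Green function behavior. This holds because $h_Q$ is absorbed entirely into $K$, and the formula $K=Ve^{-h_Q}$ is precisely the structure assumed in \eqref{eq:mf}.
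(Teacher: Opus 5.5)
Your proposal is correct and follows the paper's route: homogenize the Neumann condition by subtracting the auxiliary function $h$, then apply Theorem~\ref{thm:main} and transfer the conclusions back through $w_\eps=\tilde v_\eps+h_Q+h$. Your bookkeeping is actually cleaner than the paper's, which calls $2\tilde K e^{-h_Q+h}$ itself ``$V$''; you correctly treat it as the weight $K$ of \eqref{eq:mf}, with $V=2\tilde K e^{h}$ the smooth positive weight of \eqref{eq:s_mf}. One side remark is inaccurate: $K$ is not smooth and does not vanish on $Q$ when some $\gam(\xi)$ is negative or non-integer. Nothing in your argument uses that remark, though; only the local factorization $K=K_i|y_{\xi_i}|^{2\gam_i}$ with $K_i$ smooth and positive matters.
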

	
	The remainder of this paper is organized as follows.
	
	In Section \ref{sec:pre}, we present the necessary preliminaries, including isothermal coordinates, Green’s functions, projected bubbles,  the Moser-Trudinger inequality and the approximate solution employed in the analysis. Section \ref{sec:3} is devoted to a finite-dimensional reduction of the problem using these approximate solutions. We first analyze the linearized problem and establish its partial invertibility, and then address the infinite-dimensional problem using a fixed-point theorem. In Section \ref{sec:4}, we expand the reduced function and analyze its properties. Section  \ref{sec:5} contains the proof of the main results using variational methods. Finally, Appendices \ref{app:a} and \ref{app:b} provide technical details on the linearized problem and on the asymptotic behavior of the projected bubbles, respectively.\\

	\noindent	{\bf Acknowledgement.} M. Ahmedou acknowledges support from the DFG grant AH 156/2-1.\\
	\section{Preliminaries}\label{sec:pre}
	\subsection*{Notation}
	Throughout this paper, we use the terms ``sequence" and ``subsequence" interchangeably, as the distinction is not crucial to the context of our analysis. The constant denoted by $C$ in our arguments may assume different values across various equations or even within different lines of equations.   
	
	We use the standard asymptotic notation: $f(x) \asymp g(x)  \text{ as } x \to a
	$ means there exists a constant $C>0$ such that   $
	\frac 1 C |g(x)|\leq |f(x)|\leq C|g(x)|
	$; $f(x) = O(g(x))  \text{ as } x \to a$
	means there exist constants $C > 0 $   such that
	$
	|f(x)|   \leq C|g(x)|
	$;
	$	f(x) = o(g(x))  \text{ as } x \to a
	$ means 
	$
	\lim_{x \to a}  |f(x)|/ |g(x)| = 0.$

	\subsection{Isothermal coordinates and Green functions}\label{sec:iso}
	Every Riemann surface $(\Sigma, g)$ is conformally flat in a local sense, with isothermal coordinates (see \cite{C1955, HW1955, V1955, L1957}) where $g$ is conformal to the Euclidean metric. This paper uses a family of isothermal coordinates  from \cite{EF2014, YZ2021}, mapping an open neighborhood of $\xi$ to a disk or half-disk in $\mathbb{R}^2$, as outlined below:
	
	For any $\xi\in\intsigma$,  there exists an isothermal coordinate system $(U(\xi), y_{\xi})$ such that $y_{\xi}$ maps an open neighborhood $U(\xi)$ around $\xi$ onto an open disk $B^{\xi}$ with radius $2 r_{\xi}$ in which $g=e^{\hat{\varphi}_{\xi}}\langle\cdot,\cdot\rangle_{\R^2}$ satisfying that  $y_{\xi}(\xi)=(0,0)$ and $\overline{U(\xi)} \subset \intsigma$.
	
	For $\xi \in \partial\Sigma$, there exists an isothermal coordinate system $(U(\xi), y_{\xi})$ around $\xi$ such that the image of $y_{\xi}$ is a half disk $B^{\xi}:=\{y=(y_1, y_2) \in  \R^2:|y|<2 r_{\xi}, y_2 \geq 0\}$ of a radius $2 r_{\xi}$ and $y_{\xi}(U(\xi) \cap \partial\Sigma)=\{y= (y_1, y_2) \in \R^2:|y|<2 r_{\xi}, y_2=0\}$ with $g=e^{\hat{\varphi}_{\xi}}\langle\cdot,\cdot\rangle_{\R^2}$ satisfying that $y_{\xi}(\xi)=(0,0)$.
	
	Moreover, $\hat{\varphi}_{\xi}: B^{\xi} \to \R$ is related to  the Gaussian curvature $K_g$ of $\Sigma$ by the following equation: 
	$$
	-\Delta \hat{\varphi}_{\xi}(y)=2 K_g(y_{\xi}^{-1}(y)) e^{\hat{\varphi}_{\xi}(y)} \quad \text { for all }\, y \in B^{\xi}.
	$$
	Moreover, if $\xi\in \partial\Sigma $, then
	$$\frac{\partial}{\partial y_2}\hat{\varphi}_{\xi}(y)=-2k_g(y)e^{\frac{\hat{\varphi}_{\xi}(y)}{2} } \quad \text { for all }\, y \in \partial B^{\xi}\cap \partial\R_+^{2},$$
	where $k_g$ is the geodesic curvature of the boundary $\partial\Sigma$ with respect to the metric $g$. 
	Both $y_{\xi}$ and $\hat{\varphi}_{\xi}$ are assumed to depend smoothly on $\xi$  as in \cite{EF2014}. In addition, $\hat{\varphi}_{\xi}$ satisfies   
	\begin{equation}\label{varphixi}
		\hat{\varphi}_{\xi}(0)=0  \text{ and } \nabla\hat{\varphi}_{\xi}(0)=\left\{\begin{aligned}
			&0& & \text{ if } \xi\in \intsigma, \\
			&(0,-2k_g(\xi)) && \text{ if } \xi\in \partial \Sigma.
		\end{aligned}\right. 
	\end{equation}
	Set $\varphi_{\xi}(x)=
		\hat{\varphi}_\xi(y_{\xi}(x)) $  for $ x\in U(\xi)$.
	The transformation law for $\Delta_g$ under conformal maps is given as follows: for
	$\tilde{g}=e^{\varphi} g$,  
	$
	\Delta_{\tilde{g}}=e^{-\varphi} \Delta_g.
	$
	Moreover, the isothermal coordinates preserve the Neumann boundary conditions, i.e.  for any $x \in U(\xi)\cap \partial\Sigma$, we have
	$$
	(y_{\xi})_*(\nu_g(x))= -e^{ -\frac{\hat{\varphi}_{\xi}(y)}2}\frac {\partial} { \partial y_2 }\Big|_{	y=y_{\xi}(x)}.
	$$
	For $\xi \in \Sigma$ and $0<r \leq 2 r_{\xi}$, we set
	$
	B_r^{\xi}:=B^{\xi} \cap\{y \in \R^2:|y|<r\} $ and $U_r(\xi):=y_{\xi}^{-1}(B_r^{\xi}).
	$
	
	For any $\xi\in \Sigma $, we define the Green function associated with the Neumann Laplacian  $G^g(\cdot,\xi):=G^g_\xi$ for \eqref{eq:mf} by following equations:
	\begin{equation}\label{Green}
		\left\{\begin{aligned}
			-\Delta_g G^g(x,\xi)  &=\bm{\delta}_\xi -1, & & x\in \Sigma,\\
			\partial_{ \nu_g } G^g(x,\xi) &=0, & &x\in \partial \Sigma, \\
			\int_{\Sigma } G^g(x,\xi)  \, \d v_g(x)& =0.&&
		\end{aligned}
		\right.
	\end{equation}
	Let $\chi$ be a radial cut-off function in  $C^{\infty}(\R,[0,1])$ such that
	\begin{equation}
		\label{eq:cut_off} \chi(s)=\begin{cases}
			1 &\text{ if }\,|s|\leq 1,\\
			0 & \text{ if }\, |s|\geq 2 .
		\end{cases}
	\end{equation}	
	Set $\chi_{\xi}(x):=\chi(4|y_{\xi}(x)|/r_{\xi})$.  Define 
	$
	\Gamma^g(x,\xi)= \Gamma^g_{\xi}(x)= -\frac{4}{\varrho( \xi)} \chi_{\xi}(x) \ln  |y_{\xi}(x)|.
	$
	The regular part of the Green function is defined by 
	$
	H^g(x,\xi)=H^g_{\xi}(x):= G^g(x,\xi)-\Gamma^g(x,\xi), 
	$
	which solves the following equations: 
	\begin{equation}\label{eq:H_xi}
		\left\{\begin{aligned}
			-\Delta_g H^g_\xi & =- \frac{4}{\varrho( \xi)}(\Delta_g \chi_{\xi}) \ln   |y_{\xi}|-\frac{8}{\varrho( \xi)}\langle\nabla \chi_{\xi}, \nabla \ln   |y_{\xi}|\rangle_g-1 &&\text { in }\, \Sigma,\\ 
			\partial_{\nu_g} H^g_\xi&=\frac{4}{\varrho( \xi)}(\partial_{\nu_g} \chi_{\xi}) \ln   |y_{\xi}|+\frac{4}{\varrho( \xi)} \chi_{\xi} \partial_{\nu_g} \ln   |y_{\xi}| &&\text { on }\, \partial \Sigma  , \\
			\int_{\Sigma } H^g_\xi \, \d v_g  &=\frac{4}{\varrho( \xi)} \int_{\Sigma } \chi_{\xi} \ln   |y_{\xi}| \, \d v_g.& &
		\end{aligned}\right.
	\end{equation}
	By the regularity of the elliptic equations (refer to \cite{N2014,ABH2024MF,A1959,W2004}, for instance), there is a unique  solution $H^g(x, \xi)$ of  \eqref{eq:H_xi} in $C^{2, \al}(\Sigma)$ for any $\alpha\in (0,1)$. $H^g(x, \xi)$ is the regular part of $G^g(x, \xi)$ and $R^g(\xi):=H^g(\xi, \xi)$ is the Robin function on $\Sigma$. It is evident that   $H^g(\xi, \xi)$  does not depend on the cut-off function $\chi$ and the local chart.
	
	For $p,q\in \mathbb{N}$,   
	set $\Delta_{p,q}(\Sigma):=\{\xi=(\xi_1,\ldots,\xi_{p+q})\in \intsigma^p \times (\partial\Sigma)^{q}: \xi_i=\xi_j \text{ for some } i\neq j\}$, 
	$ \Xi_{p,q}:=\intsigma^p \times (\partial\Sigma )^{q}\setminus \Delta_{p,q}(\Sigma),
	$
	and 
	$
	\Xi^\prime_{p,q}:=
	\{ \xi=(\xi_1,\ldots,\xi_{p+q}):   \xi_i \notin Q  \text{ for all } i=1,\ldots, p+q\}. 
	$
	Denote $\R_+=(0,+\infty)$. Given $m\in \N$ and $(p,q,Q_1)\in \cI_m$, denote  $Q_1=\{\xi^*_{p+q+1}, \ldots, \xi^*_{p+q+N}\}$. 
	For $i=1,\ldots,  p+q+N$, there exists a  smooth positive function  $K_i\in C^\infty(\Sigma, \R_+)$ such that  in a small neighborhood of $\xi_i:=\xi^*_i$,
	\begin{equation}
		\label{def:Ki} K(x)= K_i(x)|y_{\xi_i}(x)|^{2\gam_i}. 
	\end{equation}
	In particular, for $i=1,\ldots, p+q$,  we take $K_i(x)=K(x)$. 
	Define  the reduced function on the configuration set  $\Xi^\prime_{p,q}$ as follows: 
	\begin{align}  \label{def:reduce_fun_0}
		\ff(\xi^0)&=\sum_{i=1}^{p+q+N} 2(1+\gam_i)\varrho( \xi_i)\ln   K_i(\xi_i) +\sum_{i=1}^{p+q+N}(1+\gam_i)^2  \varrho( \xi_i)^2 R^g(\xi_i) \\
		&+ \sum_{\mytop{
				i,j=1,\ldots,p+q+N}{
				i\neq j} }(1+\gam_i)(1+\gam_j)\varrho( \xi_i)\varrho( \xi_j) G^g(\xi_i,\xi_j), 
		\nonumber
	\end{align}
	where $R^g$ is  the Robin function defined in Section~\ref{sec:iso}, $\xi^0:=(\xi_1,\ldots,\xi_{p+q})\in \Xi^{\prime}_{p,q}, N:=|Q_1|$, $Q_1:=\{ \xi^*_{p+q+1}, \ldots, \xi^*_{p+q+N}\}$, and  $\xi:=(\xi_1,\ldots,\xi_{p+q+N})=(\xi^0,Q_1)$. 
	
	
	\subsection{Projected bubbles}
	The problem \eqref{eq:s_mf} has the following  local limit profile: with   scaling center $\xi$ in $\intsigma$
	\begin{equation*}\label{eq:Liou_1}
		\left\{	\begin{aligned}
			-\Delta \tilde{\delta}_{\lam}^\gam&=|y|^{2 \gam} e^{\tilde{\delta}_{\lam}^\gam} &&\text { in } \mathbb{R}^2, \\
			\int_{\R^2}|y|^{2 \gam} e^{\tilde{\delta}_{\lam}^\gam} \d y&=8 \pi(1+\gam),  &&
		\end{aligned} \right. 
	\end{equation*}
	with scaling center $\xi$ on $\partial\Sigma$, 
	\begin{equation}\label{eq:Liou_2}
		\left\{ \begin{aligned}
			-\Delta \tilde{\delta}_{\lam}^\gam&=|y|^{2 \gam} e^{\tilde{\delta}_{\lam}^\gam} &&\text { in } \R_+^2, \\
			\partial_{y_2} \tilde{\delta}_{\lam}^\gam&=0& &\text{ on }\partial \R_+^2 ,  \\
			\int_{\R^2_+}|y|^{2 \gam} e^{\tilde{\delta}_{\lam}^\gam} \d y&=4 \pi(1+\gam), &&
		\end{aligned} \right. 
	\end{equation}
	for  $\gam>-1$.
	All solutions of the Liouville-type equations above are completely classified 
	by Chen and Li \cite{CL1991} for the case $\gam = 0$, and by Prajapat and 
	Tarantello \cite{PT2001} for $\gam \neq 0$. 
	
	For  $\gam\in (-1,\infty) \setminus \N,$
	any solution $\tilde{\delta}_{\lam}^{\gam}$ of \eqref{eq:Liou_1} or 
	\eqref{eq:Liou_2} can be written in the following form:
	\[ \tilde{\delta}^\gam_{\lam}(y):= \tilde{\delta}^\gam(\lam y)+\ln (\lam^{2(1+\gam)}) =\ln  \frac{ 8(1+\gam)^2 \lam^{2(1+\gam)}}{( 1+ \lam^{2(1+\gam)}|y|^{2(1+\gam)})^2} \quad \text{ for } y\in \R_{\xi}, \]
	where 
	$ \tilde{\delta}^\gam(y):=\ln  \frac{ 8(1+\gam)^2 }{( 1+ |y|^{2(1+\gam)})^2},
	$
	$\lam>0,$ and $ \R_{\xi}=\R^2$ when $\xi\in\intsigma$; $\R_{\xi}= \R_+^2$ when $\xi\in \partial\Sigma.$ 
	Denote that 
	\[\delta^\gam_{\lam,\xi}(x):=\tilde{\delta}^\gam_{\lam}( y_{\xi}(x))=\ln \frac{8(1+\gam)^2\lam^{2(1+\gam)}}{(1+\lam^{2(1+\gam)}|y_{\xi}(x)|^{2(1+\gam)})^2}, \]
	for all $x\in U(\xi)$, and $\delta^\gam_{\lam,\xi}(x)=0$ for all $x\in \Sigma\setminus U(\xi).$
	Then, we define the projected bubbles by the following equations: 
	\begin{equation}\label{eq:proj_bubble}
		\left\{\begin{aligned}
			-\Delta_g P  \delta^\gam_{\lam,\xi}&=   \chi_{\xi} e^{-\varphi_{\xi}}|y_{\xi}|^{2\gam}e^{\delta^\gam_{\lam,\xi}}-\overline{  \chi_{\xi} e^{-\varphi_{\xi}} |y_{\xi}|^{2\gam}   e^{\delta^\gam_{\lam,\xi}}  } &&  \text{ in }\, \Sigma,\\
			\partial_{ \nu_g } P  \delta^\gam_{\lam,\xi}&=0 && \text{ on }\, \partial \Sigma , \\
			\int_{\Sigma } P  \delta^\gam_{\lam,\xi}  \, \d v_g&=0, &&
		\end{aligned}\right.
	\end{equation}
	where $\overline{f}:=\int_{\Sigma} f \, \d v_g$ for any $f\in L^1(\Sigma).$ Using the regularity theory in \cite{W2004, A1959}, $P\delta^\gam_{\lam,\xi}$ is well-defined and smooth.

	\subsection{The Moser-Trudinger inequality}
	The Moser-Trudinger inequality is an important tool for studying Liouville-type equations.
	
In the regular case, the classical Moser-Trudinger inequality holds; see, for instance,
\cite{yang2006extremal,MalchiodiRuiz2011,Moser1971,ChangYang1988}. We recall it in the following lemma.
	\begin{lemma} \label{lem:M-T}
		For a closed compact Riemann surface, $\Sigma$, there exist positive constants $C$ such that 
		\[ \ln \int_\Sigma K e^{u} \, \d v_g\leq \frac{1 }{16\pi } \int_\Sigma  |\nabla u|^2\, \d v_g + 
		C, \]
		for  all 
		$u \in \oH$;
		for a compact Riemann surface $\Sigma$ with boundary 
		there exist positive constants $C$ such that 
		\[ \ln \int_\Sigma K e^{u} \, \d v_g\leq \frac{1 }{8\pi } \int_\Sigma  |\nabla u|^2\, \d v_g + 
		C, \]
		for  all 
		$u \in \oH$;
	\end{lemma}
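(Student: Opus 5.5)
The plan is to derive both inequalities from the corresponding sharp Euclidean and half-space Moser–Trudinger estimates, using a partition of unity subordinate to the isothermal charts of Section~\ref{sec:iso}.

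\textbf{Reductions.} Since $K$ is smooth and nonnegative on the compact surface $\Sigma$, we have $K\le \|K\|_\infty$. Hence it suffices to prove
\[
\ln\int_\Sigma e^{u}\,\d v_g\le \frac{1}{16\pi}\|\nabla u\|_2^2+C
\]
in the closed case, and the same bound with $\frac{1}{8\pi}$ in the boundary case, for all $u\in\oH$. The key tool is the sharp Moser–Fontana inequality: there is $C_0$ such that for every $w\in H^1(\Sigma)$ with $\int_\Sigma w\,\d v_g=0$ and $\|\nabla w\|_2\le1$,
\[
\int_\Sigma e^{\beta w^2}\,\d v_g\le C_0 ,
\]
where $\beta=4\pi$ if $\partial\Sigma=\emptyset$ and $\beta=2\pi$ if $\partial\Sigma\neq\emptyset$. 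Boundary points only see half a disc, which halves the admissible constant. Given this, Young's inequality $u\le \frac{\beta u^2}{\|\nabla u\|_2^2}+\frac{\|\nabla u\|_2^2}{4\beta}$ yields
\[
\int_\Sigma e^{u}\,\d v_g\le C_0\, e^{\|\nabla u\|_2^2/(4\beta)} .
\]
Since $\frac1{4\beta}=\frac1{16\pi}$ for closed surfaces and $\frac1{8\pi}$ for surfaces with boundary, this gives both statements. Because the lemma allows a non-sharp additive constant $C$, it is enough to prove the exponential integrability estimate up to a multiplicative constant.

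\textbf{Proving the exponential bound.} Cover $\Sigma$ by finitely many charts $U(\xi_k)$ and take a subordinate partition of unity $\{\psi_k\}$. In each chart we have $g=e^{\hat\varphi}\langle\cdot,\cdot\rangle$, and the Dirichlet integral is conformally invariant. On interior charts, apply Moser's inequality on the disc to $\psi_k w$. On boundary charts, apply the half-disc version obtained by even reflection across $\{y_2=0\}$: reflection doubles the Dirichlet energy, so the constant drops from $4\pi$ to $2\pi$. The area densities $e^{\hat\varphi}$ are bounded above and below, so the measures are comparable. The cross terms $\|\nabla\psi_k\|\,\|w\|_2$ are controlled via Poincaré's inequality on $\oH$.

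\textbf{Main obstacle.} The partition-of-unity step loses sharpness, because $\|\nabla(\psi_k w)\|_2^2\le(1+\epsilon)\|\nabla w\|_2^2+C_\epsilon\|w\|_2^2$. I would handle this by the standard concentration–compactness/blow-up argument of Fontana and Chang–Yang. Assume $\int e^{\beta w_n^2}\to\infty$ along a sequence. Then $|\nabla w_n|^2\rightharpoonup\delta_{x_0}$. If $x_0$ is interior, localize to obtain a contradiction with the sharp $4\pi$ disc inequality. If $x_0\in\partial\Sigma$, reflect evenly to get a contradiction with the disc inequality for energy $\le2$, which is exactly the $2\pi$ constant.

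In practice, since the paper explicitly recalls this lemma from \cite{yang2006extremal,MalchiodiRuiz2011,Moser1971,ChangYang1988}, I would simply cite it, recording only the Young-inequality reduction above and the remark that boundary concentration halves the constant.
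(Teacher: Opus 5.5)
Your proposal is correct and follows the paper, which gives no argument beyond citing the sharp inequalities of Fontana, Chang--Yang and Yang. Bounding $K\le\|K\|_\infty$ and applying Young's inequality to $\int_\Sigma e^{\beta w^2}\le C_0$, with $\beta=4\pi$ (closed case) resp.\ $\beta=2\pi$ (boundary case), gives exactly the coefficients $\frac{1}{16\pi}$ and $\frac{1}{8\pi}$; the bound $K\le\|K\|_\infty$ is legitimate because the lemma concerns bounded $K$, which is the $\gamma\ge 0$ case the paper mentions.

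Keep the citation rather than your concentration--compactness sketch, which is not a proof as written. Cutting off near $x_0$ only gives Dirichlet energy $1+o(1)$, with no rate on the $o(1)$. The sharp disc inequality does not control $\int e^{4\pi(1+\epsilon_n)v_n^2}$ along such sequences. Moreover, concentration of mass one at a single point is by itself compatible with Moser's bound, so no contradiction follows. The sharp constant genuinely needs Fontana's Adams-type argument or the arguments in the cited works of Chang--Yang and Yang.
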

	
	In the singular case with negative singular sources, the sharp Moser-Trudinger
	constant is not yet fully understood.
	Nevertheless, for the purposes of the present paper, it suffices to use a
	non-optimal version of the inequality, stated below.
	\begin{lemma} 	\label{lem:M-T2}
		There exist positive constants $C,\tau^*>0$ such that 
		\[ \ln \int_\Sigma K e^{u} \, \d v_g\leq \frac{1 }{4\mathrm{b}} \int_\Sigma  |\nabla u|^2\, \d v_g + 
		C, \]
		for  all 
		$u \in \oH$ and $\mathrm{b}\in (0, 2\pi \tau^*)$,
	\end{lemma}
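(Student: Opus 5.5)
The plan is to reduce Lemma~\ref{lem:M-T2} to the regular Moser--Trudinger inequality of Lemma~\ref{lem:M-T} by H\"older's inequality, paying for the possible singularity of $K$ with a worse constant. The key observation is an integrability property of $K=Ve^{-h_Q}$. Near each $\xi\in Q$, in the isothermal chart $y_\xi$, we have $K(x)=K_\xi(x)|y_\xi(x)|^{2\gam(\xi)}$ with $K_\xi$ smooth and positive, as in \eqref{def:Ki}. Away from $Q$, $K$ is smooth and positive.

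First I would show that $K\in L^s(\Sigma)$ for some $s>1$. If all $\gam(\xi)\ge 0$, then $K$ is bounded and any $s\in(1,\infty)$ works. If some weights are negative, put $\gam_-:=\max\{-\gam(\xi):\xi\in Q,\ \gam(\xi)<0\}\in(0,1)$. Then $|y|^{2\gam(\xi)s}$ is integrable on a disk or half-disk in $\R^2$ as soon as $2\gam(\xi)s>-2$, that is, $s<1/\gam_-$. Since the conformal factor $e^{\hat\varphi_\xi}$ is bounded above and below on the chart, this gives $K\in L^s(\Sigma)$ for every $s\in(1,1/\gam_-)$. I fix one such $s$ and let $s'=s/(s-1)$ be its conjugate exponent.

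Next, for $u\in\oH$, H\"older's inequality gives
\[
\int_\Sigma Ke^{u}\,\d v_g\le \|K\|_{L^s(\Sigma)}\Big(\int_\Sigma e^{s'u}\,\d v_g\Big)^{1/s'}.
\]
Since $s'u\in\oH$, I apply Lemma~\ref{lem:M-T} to $s'u$ with weight $1$ in place of $K$, using the boundary version, which is the weaker one and also covers the closed case:
\[
\ln\int_\Sigma e^{s'u}\,\d v_g\le \frac{s'^2}{8\pi}\int_\Sigma|\nabla u|^2\,\d v_g+C.
\]
Taking logarithms in the H\"older bound then yields
\[
\ln\int_\Sigma Ke^u\,\d v_g\le \frac{s'}{8\pi}\int_\Sigma|\nabla u|^2\,\d v_g+C+\ln\|K\|_{L^s}.
\]
This is the claimed inequality with $\mathrm{b}_0=2\pi/s'$. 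I therefore set $\tau^*:=1/s'=1-1/s\in(0,1)$, so that $\mathrm{b}_0=2\pi\tau^*$. For any $\mathrm{b}\in(0,2\pi\tau^*)$ we have $\frac1{4\mathrm{b}}>\frac1{4\mathrm{b}_0}$, so the inequality with constant $\frac1{4\mathrm{b}}$ follows immediately, with $C$ independent of $\mathrm{b}$ and $u$.

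The only point requiring care is the first step: showing that $K\in L^s$ for some $s>1$, uniformly over the finitely many singular points, including those on $\partial\Sigma$ where the chart is a half-disk. This follows from the local form \eqref{def:Ki}, finiteness of $Q$, and the condition $\gam(\xi)>-1$. I do not expect any real obstacle, since optimality of the constant is not claimed.
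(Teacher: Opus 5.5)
Your proof is correct, but it takes a genuinely different and more elementary route than the one the paper indicates.

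The paper does not actually prove the lemma. For nonnegative weights it cites Yang's extremal-function result. For negative weights it invokes Troyanov's Moser--Trudinger argument for surfaces with conical singularities, combined with a doubling of $\Sigma$ across $\partial\Sigma$, and omits the details. In effect it reduces to a closed surface with conical points and then halves the resulting constant.

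You use only two ingredients:
\begin{itemize}
\item $K=Ve^{-h_Q}\in L^s(\Sigma)$ for some $s>1$. Any $s<1/|\gam(\xi)|$ over the negative weights works, and $s=\infty$ works if all weights are nonnegative.
\item H\"older's inequality, followed by the regular boundary inequality of Lemma~\ref{lem:M-T} applied to $s'u\in\oH$.
\end{itemize}
Together these give the claim with $\tau^*=1-1/s$ and $C$ independent of both $\mathrm{b}$ and $u$.

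Your argument treats interior and boundary singular points, and both signs of the weights, in a single step. It also avoids the technicalities of the doubling: reflecting the metric and the Neumann functions, and applying a closed-surface conical inequality to a metric that is only Lipschitz across the seam.

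Letting $s\uparrow 1/\max_{\gam(\xi)<0}|\gam(\xi)|$, you obtain every $\tau^*<\min\{1,1+\min_{\xi\in Q}\gam(\xi)\}$. This is the same range the conical route gives, up to the endpoint. The heavier route is needed only for sharp information, such as endpoint constants or distinct local constants at interior versus boundary singularities. The lemma asks only for some $\tau^*>0$ and needs none of this.
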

	If $\gam(\xi) \ge 0$ for all $\xi \in  Q$, Lemma~\ref{lem:M-T} follows from \cite{yang2006extremal}.  
	In the presence of negative conical singularities, the inequality follows by combining the
	argument of \cite{Troyanov1991} with a doubling construction.  
	We omit the details.

	\subsection{Approximate solutions}
	Suppose that $\{u_n\}$ is a sequence of solutions of the following homogeneous Neumann boundary problem: 
	\[   	\left\{ \begin{aligned}
		-\Delta_g u_n &=\rho_n  \Big( \frac{K^n e^{u_n}}{\int_{\Sigma} K^n e^{u_n}\, \d v_g} -1\Big)  && \text{ in } \Sigma, \\
		\partial_{\nu_g} u_n&=0 & &\text{ on } \partial\Sigma, 
	\end{aligned}\right.  \]
	with $K^n \to K$ and $\rho_n\to\rho.$
	We have the following concentration-compactness alternative for singular mean field equations.
	\begin{lemma}
		\label{thm:C-C}
		One of the following alternatives occurs:
		\begin{itemize}
			\item [a.] (Compactness) $\{u_n\}$ is uniformly bounded in $H^1(\Sigma)$. 
			\item [b.](Concentration) The blow-up point set $\cS:=\{ x\in \Sigma: \exists x_n \to x \text{ s.t. } u_n(x_n)\to+\infty\}$ is non-empty and finite, and $u_n -\ln  \int_\Sigma K^n e^{u_n} \, \d v_g\to -\infty $ is locally uniform in $\Sigma\setminus \cS.$ Moreover, 
			\[  \rho_n  \frac{K^n e^{u_n}}{\int_\Sigma K^n e^{u_n} \, \d v_g} \, \d v_g \stackrel{*}{\rightharpoonup}  \sum_{x\in \cS} (1+\gam(x)) \bm{\delta}_x, \]
			where $\bm{\delta}_x$ is the Dirac mass concentrated at point $x$ and 	$
			\gam(x) = 
			\begin{cases} 
				0 & \text{ if } x \notin Q\\
				\gam(x)& \text{ if } x\in Q
			\end{cases}. $
		\end{itemize} 
	\end{lemma}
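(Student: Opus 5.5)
This is the concentration--compactness alternative of Brezis--Merle type, adapted to Neumann boundaries and conical singularities. I will follow the classical route of \cite{brezis_uniform_1991, BartolucciTarantello2002, Bartolucci2002Liouville}, localized in the isothermal charts of Section~\ref{sec:iso}. (The Dirac weights in alternative b are to be read as $(1+\gam(x))\varrho(x)$, consistent with Theorem~\ref{thm:main}.)

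\textbf{Reduction to a local problem.} Normalize $w_n:=u_n-\ln\int_\Sigma K^ne^{u_n}\,\d v_g$, so that $-\Delta_g w_n=\rho_n(K^ne^{w_n}-1)$ with $\int_\Sigma K^ne^{w_n}\,\d v_g=1$. In a chart $y_\xi$, write $\tilde w_n:=w_n\circ y_\xi^{-1}$. For $\xi\in\partial\Sigma$, extend $\tilde w_n$ evenly across $\{y_2=0\}$. The Neumann condition $\partial_{y_2}\tilde w_n=0$ makes the reflected function a weak solution on the full disk of
\[
-\Delta\tilde w_n=W_n|y|^{2\gam}e^{\tilde w_n}-f_n,
\]
with $W_n$ bounded above and below and $f_n$ bounded. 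The reflected $L^1$ mass doubles, which is why $\varrho=4\pi$ on the boundary versus $8\pi$ inside.

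\textbf{Brezis--Merle dichotomy.} Define the blow-up set via the measures $\mu_n:=\rho_nK^ne^{w_n}\,\d v_g$, which have mass $\rho_n$. By Brezis--Merle, including the weighted version with $|y|^{2\gam}$, $\gam>-1$, handled as in \cite{BartolucciTarantello2002}, a point $x$ with $\mu_n(U_r(x))<\varrho(x)/2-\eta$ for some $r$ has $w_n^+$ locally bounded near $x$. Call $x$ a blow-up point if $\lim_{r\to0}\limsup_n\mu_n(U_r(x))\ge\varrho(x)/2$. The total mass bound then gives $|\cS|<\infty$. If $\cS=\emptyset$, the $w_n^+$ are uniformly bounded. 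Green's representation with $G^g$ and elliptic estimates then bound $u_n$ (mean zero) in $W^{1,p}$ for $p<2$, and via the equation in $H^1$: this is alternative a. If $\cS\neq\emptyset$, a Harnack-type argument gives the usual trichotomy away from $\cS$: $w_n$ is either locally bounded or tends to $-\infty$. The bounded case is excluded because $\sup w_n\to+\infty$ near $\cS$ forces the mass at $\cS$, and a bounded $w_n$ off $\cS$ would contradict the Green representation near a blow-up point. Hence $w_n\to-\infty$ locally uniformly on $\Sigma\setminus\cS$, and $\mu_n\stackrel{*}{\rightharpoonup}\sum_{x\in\cS}m(x)\bm\delta_x$.

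\textbf{Quantization of $m(x)$.} This is the main obstacle. I would use a Pohozaev identity on $B_r$ for the (reflected) local equation. Since $\tilde w_n\to-\infty$ away from $0$, the Green representation gives $\tilde w_n\approx-\frac{m(x)}{2\pi}\cdot\frac{\varrho(x)}{8\pi}\ln|y|\cdot(\text{doubling factor})+O(1)$ on $\partial B_r$. Inserting this into the Pohozaev identity yields $m(x)^2=2(1+\gam(x))\varrho(x)\,m(x)$ after accounting for the doubling, i.e.\ $m(x)=(1+\gam(x))\varrho(x)$. Two points need care. At a boundary point, the term coming from $\nabla\hat\varphi_\xi(0)=(0,-2k_g)$ and from the non-even part of $W_n$ must be shown to be $o(1)$ as $r\to0$; it contributes only $O(r)$ times the mass. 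The weight $y\cdot\nabla(|y|^{2\gam}W_n)$ produces exactly the $2\gam$ correction. The residual-mass ``neck'' contribution vanishes because $w_n\to-\infty$ uniformly on annuli, as in \cite{Bartolucci2002Liouville}. Summing over $x\in\cS$ gives the stated weak-$*$ limit.
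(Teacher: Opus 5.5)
Your route is the standard one the paper itself only gestures at. The paper gives no proof; it cites \cite{LSY2023} ``with suitable modifications'' for singular sources. Your ingredients (Brezis--Merle localization in isothermal charts, even reflection across $\partial\Sigma$, Pohozaev quantization) are the right ones, and you are right that the Dirac weights must read $(1+\gam(x))\varrho(x)$. However, the two places where singular sources genuinely require modification are exactly where your outline breaks.

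\emph{First gap: the $\eps$-regularity threshold.} The threshold $\varrho(x)/2$ is false at points of $Q$ with $\gam(x)<0$, because $|y|^{2\gam}\notin L^\infty$. The bubble $\tilde\delta^\gam_\lam$ is itself a counterexample: for $\gam<-\frac12$ its total mass is $8\pi(1+\gam)<4\pi$, yet $\tilde\delta^\gam_\lam(0)\to+\infty$ as $\lam\to\infty$. So your mass-defined $\cS$ misses such blow-up points. The step ``$\cS=\emptyset\Rightarrow w_n^+$ bounded $\Rightarrow$ alternative a'' then fails, since genuinely blowing-up sequences would be classified as compact. Combining Brezis--Merle with H\"older, using $|y|^{2\gam}\in L^t$ for $t<1/|\gam|$, gives the correct threshold $\frac{\varrho(x)}{2}\min\{1,1+\gam(x)\}$.

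\emph{Second gap: excluding ``$w_n$ locally bounded off $\cS$''.} Your Green-representation argument needs $|y|^{2\gam}e^{w}\sim|y|^{2\gam-4m(x)/\varrho(x)}$ to be non-integrable near $x$, i.e.\ $m(x)\ge\frac{\varrho(x)}{2}(1+\gam(x))$. For $\gam(x)>0$, $\eps$-regularity only gives $m(x)\ge\varrho(x)/2$, so there is no contradiction. Hence ``$w_n\to-\infty$ off $\cS$'' is unproved at such points, and your quantization step relies on it. The fix is to run your Pohozaev identity in this scenario as well. Write $w=-\frac{4m}{\varrho}\ln|y|+v$ with $|y|^{2\gam}e^{w}$ integrable; then all boundary terms except the logarithmic one vanish as $r\to0$. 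This again forces $m=(1+\gam)\varrho$, contradicting the integrability condition $m<\frac{\varrho}{2}(1+\gam)$. This is essentially how \cite{BartolucciTarantello2002} handle it.

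\emph{Minor points in the quantization step.} Since $\tilde w_n\to-\infty$, $\tilde w_n$ is not ``log $+\,O(1)$''. What converges is $\nabla w_n\to\sum_{x\in\cS}m(x)\nabla G^g(\cdot,x)$ locally off $\cS$. The local coefficient of $\ln|y|$ is $\frac{4m(x)}{\varrho(x)}$; your $\frac{m}{2\pi}\cdot\frac{\varrho}{8\pi}$ is wrong at boundary points. You also still need to match your mass-defined $\cS$ to the lemma's pointwise definition via $u_n(x_n)\to+\infty$. With these corrections the Pohozaev computation does yield $m(x)=(1+\gam(x))\varrho(x)$.
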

	
	The concentration-compactness alternative for Liouville-type equations is a 
	well-studied result, initially formulated by Brezis and Merle 
	\cite{brezis_uniform_1991}. Subsequent developments have extended this 
	framework to various settings.
	In the case of regular problems, we refer to 
	\cite{Lucia2002Vortex, suzuki1992emden} for the case without boundary 
	blow-up, and to \cite{LSY2023, WW2002steady, Battaglia2019stationary} for 
	those involving boundary blow-up. For singular problems, the case without 
	boundary blow-up has been treated in 
	\cite{Bartolucci2002Liouville, BartolucciTarantello2002} and the references 
	therein.
	Following the argument in \cite{LSY2023}, and with suitable modifications to 
	account for singular sources, Theorem~\ref{thm:C-C} can be established by
	an analogous approach.

	Using Lemma~\ref{thm:C-C} together with the Moser-Trudinger inequality in Lemma \ref{lem:M-T}, we obtain that  
	\begin{itemize}[noitemsep, topsep=0pt, leftmargin=*]
		\item[] \emph{ $\cS \neq \emptyset$ if and only if 
			$
			\displaystyle \int_{\Sigma} K^{n} e^{u_{n}}\, \d v_{g} \to +\infty.
			$}
	\end{itemize}

	To construct a sequence of blow-up solutions to \eqref{eq:mf} as the parameter $\rho\to 4\pi n_m$, it is equivalent to finding blow-up solutions of the following Liouville-type equations:
	\begin{equation}
		\label{eq:main_L}
		\left\{\begin{aligned}
			-\Delta_g u&= \eps^2 Ke^u -\overline{\eps^2 K e^u}  && \text{ in } \Sigma\\
			\partial_{\nu_g} u&=0 & &\text{ on }\partial\Sigma
		\end{aligned}, \right. 
	\end{equation}
	by   Lemma \ref{thm:C-C}  and the change of the variable $\eps^2=\frac{\rho}{\int_\Sigma K e^u \, \d v_g} \to 0.$
	The Euler-Lagrange functional associated with \eqref{eq:main_L} is given by
	\begin{equation*}
		E_\eps(u)=\frac 1 2 \int_\Sigma |\nabla u|_g^2 \, \d v_g - \eps^2 \int_\Sigma K e^u \, \d v_g \qquad \text{ for }  u\in \oH. 
	\end{equation*}

	Next, for any fixed $m\in\N$ and $(p,q,Q_{1})\in\cI_{m}$, we  construct a family of blow-up solutions 
	$\{u_{\eps}\}$ to \eqref{eq:main_L} as $\eps\to0$.  
	We then prove that the same family $\{u_{\eps}\}$ also yields a blow-up sequence for the mean field equation \eqref{eq:mf} as the parameter $\rho \to 4\pi n_{m}$.

	Suppose that $\xi^{0,*}:=(\xi_1^*,\ldots,\xi_{p+q}^*)$ is a $C^1$-stable critical point of $\ff$.  
	For each $\xi_i^*$, let $(y_{\xi_i^*},U(\xi_i^*))$ be an isothermal coordinate chart such that  
	$U_\sigma(\xi_i^*) \subset U_{r_{\xi_i^*}}(\xi_i^*)$ for all sufficiently small $\sigma>0$.  
	Then, for any $\xi_i\in U_{\sigma}(\xi_i^*)$, there exists an isothermal coordinate system 
	$(y_{\xi_i},U(\xi_i))$ as in \cite[Claim~3.1]{ABH2024MF}.  
	In particular, both $y_{\xi_i}$ and $\hat{\varphi}_{\xi_i}$ depend smoothly on $\xi_i$ for 
	$\xi_i \in U_{\sigma}(\xi_i^*)$, $i=1,\ldots,p+q+N$.
	
	Define
	\begin{equation}\label{eq:def_M_xi_delta}
		M_{\sigma,\xi^*}
		:=\bigl\{\,\xi=(\xi_1,\ldots,\xi_{p+q},Q_1)\in\Xi_{p,q}' : 
		\xi_i\in U_\sigma(\xi_i^*) \text{ for } i=1,\ldots,p+q \,\bigr\},
	\end{equation}
	for $\sigma>0$, 
	and
	$
	M_{\sigma,\xi^*}^0
	:=\bigl\{\,\xi^0 \in \Xi_{p,q}' : (\xi^0,Q_1)\in M_{\sigma,\xi^*} \,\bigr\},
	$
	where $\xi^*=(\xi^{0,*},Q_1)$.
	
	For $(p,q,Q_1)\in\cI_m$, set $N:=|Q_1|>0$,  
	$
	Q_1=\{\xi_{p+q+1}^*,\ldots,\xi_{p+q+N}^*\},
	$ $
	\xi=(\xi_1,\ldots,\xi_{p+q+N})=(\xi^0,Q_1).
	$
	For any $\xi\in M_{\sigma,\xi^*}$, we may choose a uniform constant $r_0>0$, depending only on $\sigma$,  
	such that $r_{\xi_i}\ge 4r_0$ for all $i=1,\ldots,p+q+N$.  
	We further assume that
	\[
	U_{4r_0}(\xi_i)\cap U_{4r_0}(\xi_j)=\emptyset\quad\text{for } i\neq j,
	\qquad
	U_{2r_0}(\xi_i)\cap\partial\Sigma=\emptyset\quad\text{for } i=1,\ldots,p.
	\]
	We also denote
	$
	\ii(\xi) = 2, \text{ for } \xi\in\intsigma; \ii(\xi)=
	1, \text{ for }\xi\in\partial\Sigma.
	$

	We define the scaling functions in $\Sigma\setminus\{ \xi_j: j\neq i\}$  for $i=1,\ldots, p+q+N$ by 
	\begin{equation}
		\label{def:d_i}
		d_i(x):= \frac  1{ 8(1+\gam_i)^2} e^{(1+\gam_i) \varrho(\xi_i)H^g(x, \xi_i) +\sum^{p+q+N}_{\mytop{j=1}{j\neq i}}(1+\gam_j)\varrho(\xi_j) G^g(x,\xi_j) +\ln  K_i(x)},
	\end{equation}
	where $K_i$ is given by \eqref{def:Ki}. 
	Let 
	\begin{equation}\label{def:lam_i}
		\lam_i^{-2(1+\gam_i)}=d_i \eps^2,
	\end{equation}
	where $d_i:=d_i(\xi_i).$
	For simplicity, we use the notation   $\chi_i:=\chi_{\xi_i}$, $\varphi_{\xi_i}:=\varphi_i$, $\hat{\varphi}_{\xi_i}:=\hat{\varphi}_i$,  
	$\delta_i=
	\delta^{\gam_i}_{\lam_i,\xi_i} $
	and $P \delta_i= P\delta_{\lam_i,\xi_i}^{\gam_i}$
	for $i=1,\ldots, p+q+N$. 
	

	We consider the following approximate manifold:
	\begin{align*}
		\quad \cM^\eps_{p,q,Q_1}&=\Big\{\sum_{i=1}^{p+q+N} P\delta_i: \xi^0\in M^0_{\sigma,\xi^*}, \xi_{p+q+j}=\xi^*_{p+q+j} \text{ for } j=1,\ldots, N; \lam_i \text{ is  given by } \eqref{def:lam_i} \Big\}, 
	\end{align*}
	where $\eps>0$ is a parameter and  $Q_1=\{ \xi^*_{p+q+1},\ldots,\xi^*_{p+q+ N}\}\subset Q. $
	It is clear that $\cM^\eps_{p,q,Q_1}$ is a $(2p+q)$-dimensional manifold. The tangent space is generated by 
	\begin{equation*}
		\partial_{(\xi_i)_j} P\delta_i, \quad\forall i=1,\ldots, p+q, j=1,\ldots,\ii(\xi_i).
	\end{equation*}
	
	Given $(p,q,Q_1)\in \cI_m$ with $|Q_1|=N$, set 
	$\xi=(\xi_1,\ldots,\xi_{p+q+N})=(\xi^0,Q_1)$.  
	Define
	\[
	W := \sum_{i=1}^{p+q+N} P\delta_i,
	\]
	which serves as the approximate solution, depending on the variable 
	$\xi^0 \in \Xi_{p,q}'$.  
	We then look for a solution of the form
	$
	u = W + \phi,
	$
	where $\phi$ is the remainder term and satisfies $\|\phi\| =
(	\int_\Sigma |\nabla \phi|_g^2 \, \d v_g)^{\frac 1 2 } = o(1)$ as 
	$\eps \to 0$.

	\section{Finite-dimensional reduction}\label{sec:3}
	In this section, we employ the ansatz $W=\sum_{i=1}^{p+q+N}P\delta_i$ as an approximate solution to
	problem~\eqref{eq:main_L} in order to perform a finite-dimensional reduction.  
	We begin by establishing a key lemma concerning the partial invertibility of the linearized
	operator and by identifying an appropriate finite-dimensional subspace of $\oH$ for the
	reduction scheme.  
	Subsequently, we estimate the corresponding error term and apply a fixed-point argument to
	reduce the original problem to a finite-dimensional one.

	\subsection{Linearized problem}

	Utilizing the Moser-Trudinger  inequality from Lemma \ref{lem:M-T} on compact Riemann surfaces,  the map $$\overline{\mathrm{H}}^1\rightarrow L^s(\Sigma ),\quad  u\mapsto e^u$$ is continuous.  	For any $s>1$, let $\tilde{i}^*_s: L^s(\Sigma)\rightarrow \overline{\mathrm{H}}^1$  be the adjoint operator corresponding to the immersion $i: \overline{\mathrm{H}}^1 \rightarrow L^{\frac{s}{s-1}}(\Sigma)$ and $\tilde{i}^* : \cup_{s>1} L^s(\Sigma)\rightarrow \overline{\mathrm{H}}^1$. For any $f\in L^s(\Sigma)$, we define that $i^*(f):=\tilde{i}^*( f-\bar{f})$, i.e.  for any $h\in \overline{\mathrm{H}}^1 $, 	$$\langle i^*(f), h\rangle =\int_{\Sigma}K (f-\bar{f}) h \, \d v_g.$$

	By substituting  $W+\phi$ into \eqref{eq:main_L}, we have 
	\begin{equation}
		\label{eq:main_L2} \left\{   \begin{aligned}
			\cL(\phi)&=S(\phi)+ N(\phi)+ \cR,&& \text{ in } \Sigma,\\
			\partial_{\nu_g}\phi &=0, & &\text{ on } \partial\Sigma, 
		\end{aligned}\right. 
	\end{equation}
	where $$\cL(\phi):= -\Delta_g \phi -\sum_{i=1}^{p+q+N} \chi_i  e^{-\varphi_i}  e^{\delta_i} \phi  + \overline{\sum_{i=1}^{p+q+N} \chi_i  e^{-\varphi_i}  e^{\delta_i} \phi} $$ is the first linear term, 
	\begin{equation}
		\label{def:S} S(\phi):= \eps^2 K e^{W}\phi-\sum_{i=1}^{p+q+N} \chi_i  e^{-\varphi_i}  e^{\delta_i} \phi  + \overline{ \eps^2 K e^{W}\phi-\sum_{i=1}^{p+q+N} \chi_i  e^{-\varphi_i}  e^{\delta_i} \phi }
	\end{equation}
	is the higher linear term, 
	\begin{equation}
		\label{def:nonlinear} N(\phi):= \eps^2 e^W( e^\phi-\phi-1) -\overline{ \eps^2 e^W( e^\phi-\phi-1) }
	\end{equation}
	and 
	\begin{equation}
		\label{def:R}   \cR:= \Delta_g W+ \eps^2 K e^W -\overline{\eps^2 K e^W }
	\end{equation}
	is the remainder term. 
	To find a sequence of blow-up solutions of \eqref{eq:main_L} with the form $W+\phi$, it is sufficient to find a proper $\xi\in \Xi_{p,q}^\prime\times Q_1^N$ and $\phi$ solving the nonlinear problem \eqref{eq:main_L2} for $\eps>0$ sufficiently small.  
	
	From the construction, we know
	\[ \int_\Sigma \cL(\phi)\, \d v_g =\int_\Sigma S(\phi) \, \d v_g=  \int_\Sigma N(\phi) \, \d v_g = \int_\Sigma \cR \, \d v_g. \]
	Next, we proceed to show the partial invertibility of the linearized operator on a subspace of $\oH$ with finite co-dimension, which is the key lemma for the  Lyapunov-Schmidt reduction process. 
	
	Let 	$$z^0_0(y)=2\frac{ 1-|y|^2}{1+|y|^2} \quad \text{ and }\quad z^0_{j}(y)=\frac{4y_j}{1+|y|^2} \text{ for }j=1,\ldots,\ii(\xi),$$
	which  generate the kernel of the following linear problem (see \cite{CL2002,Esposito2005}):
	\begin{equation*}
		\left\{   \begin{aligned}
			-\Delta \phi &= \frac{8}{(1+|y|^2)^2}   \phi  & &\text{ in } \R_\xi\\
			\partial_{y_2}\phi &=0 &  &\text{ on } \partial \R_\xi
		\end{aligned}\right. ,
	\end{equation*}
	where $\R_\xi=\R^2$ for $\xi\in\intsigma$ and $\R^2_+$ for $\xi\in \partial \Sigma$.  Furthermore, for $\gam\in (-1, +\infty)\setminus (\N\cup \{ 0\})$,   we set 
	$$z_0^{\gam}= 2(1+\gam) \frac{1-|y|^{2(1+\gam)}}{1+|y|^{2(1+\gam)}},$$ 
	which generates the kernel of the following linear problem (see Lemma \ref{lem:profile_lin}):
	\begin{equation*}
		\left\{   \begin{aligned}
			-\Delta \phi &=   |y|^{2(1+\gam )}\frac{8}{(1+|y|^{2(1+\gam)})^2}   \phi  && \text{ in } \R_\xi\\
			\partial_{y_2}\phi &=0 &&  \text{ on } \partial \R_\xi
		\end{aligned}\right. . 
	\end{equation*}
	Then, we pull back the function $z^{\gam_i}_j$ to the Riemann surface by isothermal coordinates, 
	\[ Z^j_i(x)= \left\{   \begin{aligned}
		&	z^{\gam_i}_j (\lam_i |y_{\xi_i}|)  && x\in U(\xi_i)\\
		&	0&  &x\in \Sigma\setminus U(\xi_i)
	\end{aligned}\right. , \]
	for $i=1,\ldots, p+q+N$  and $j= 0,\ldots,\ii(\xi_i)$ for $i=1,\ldots, p+q$ and $j=0$ for $i=p+q+1, \ldots, p+q+N$. 
	Then, we projected the function $Z^j_i$ into the space $\oH$ by following equations 
	\begin{equation}\label{def:PZ}
		\left\{  \begin{aligned}
			-\Delta_g PZ^j_i&= \chi_i  |y_{\xi_i}|^{2\gam_i} e^{-\varphi_i} e^{\delta_i}  Z^j_i -\overline{\chi_i  |y_{\xi_i}|^{2\gam_i} e^{-\varphi_i}   e^{\delta_i}  Z^j_i } & &\text{ in } \Sigma  \\
			\partial_{\nu_g} PZ^j_i &=0 & &\text{ on } \partial \Sigma
		\end{aligned}.   \right. 
	\end{equation}
	Then, we will consider the linearized problem on 
	the orthogonal space of 
	\[ K_{\xi}:= \la  PZ^j_i: i=1,\ldots, p+q, j=1, \ldots, \ii(\xi_i)  \ra, \]
	and its orthogonal space
	\[  K_{\xi}^{\perp}:=\left\{ \phi \in\oH: \la \phi, h\ra =0, \forall h \in K_{\xi}\right\}. \]
	For any $s>1$, given $h\in L^{s}(\Sigma):=\{h: \|h\|_{s}:=(\int_\Sigma |h|^s \, \d v_g )^{\frac 1 s }<+\infty\}$, find $\phi\in K_{\xi}^\perp\cap W^{2,s}(\Sigma)$  such that 
	\begin{equation}~\label{eq:linprob}
		\left\{   \begin{aligned}
			\cL(\phi)&= h & &\text{ in }\Sigma\\
			\partial_{ \nu_g }\phi&= 0&& \text{ on }\partial \Sigma\\
		\end{aligned}\right. . 
	\end{equation}
	
We first obtain the key lemma for the Lyapunov-Schmidt reduction process. It is rather standard to deduce it following the argument in \cite{EF2014,del_pino_singular_2005}. For brevity, we provide a brief outline of the proof.   
	\begin{lemma}
		\label{lem:invertible}
		Let $\Dc$ be a compact subset of $M_{\sigma, \xi^*}$. 
		For any $s>1$,  
		there exist $\eps_0 > 0$ and $C > 0$ such that for any $\eps \in (0, \eps_0)$, $\xi\in \Dc$,  $h\in L^s(\Sigma)$ and   $\phi \in \oH\cap K_{\xi}^{\perp}$ is the unique solution of \eqref{eq:linprob}, the following estimate holds true: 
		\[
		\|\phi\| \leq C |\ln  \eps|\,  \|h\|_s,
		\]
		where $\|h\|_s:=\|h\|_{L^s(\Sigma)}$.
	\end{lemma}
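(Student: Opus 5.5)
We argue by contradiction, following the scheme of \cite{del_pino_singular_2005,EF2014}. Suppose there are $\eps_n\to0$, $\xi^n\in\Dc$, $h_n\in L^s(\Sigma)$ and $\phi_n\in \oH\cap K_{\xi^n}^\perp$ solving \eqref{eq:linprob} with $\|\phi_n\|=1$ and $|\ln\eps_n|\,\|h_n\|_s\to0$. The linear equation is really solved modulo the kernel directions, so we write $\cL(\phi_n)=h_n+\sum_{i\le p+q}\sum_{j\ge1} c^n_{ij}(-\Delta_g PZ^j_i)$. A first step shows $c^n_{ij}\to0$ fast enough by testing against $PZ^j_i$. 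For this we use the expansions $PZ^j_i=Z^j_i+O(1/\lam_i)$ for $j\ge1$ (Appendix \ref{app:b}), the almost-orthogonality of the $PZ^j_i$, and the fact that $\cL(PZ^j_i)$ is small in $L^s$.

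Next we blow up around each $\xi_i$. Define $\tilde\phi_{n,i}(y)=\phi_n(y_{\xi_i}^{-1}(y/\lam_i))$ on the dilated domain $\lam_i B^{\xi_i}_{2r_0}$. The weight $\chi_i e^{-\varphi_i}e^{\delta_i}$ rescales to $|y|^{2\gam_i}\frac{8(1+\gam_i)^2}{(1+|y|^{2(1+\gam_i)})^2}$ times $\lam_i^{-2}$, and the Dirichlet energy is conformally invariant, so $\tilde\phi_{n,i}$ is bounded in $H^1_{loc}$. It also satisfies a weighted $L^2$ bound obtained from the equation. Hence $\tilde\phi_{n,i}\to\tilde\phi_i$ weakly, where $\tilde\phi_i$ solves the limiting linear problem in $\R_{\xi_i}$ with Neumann condition on $\partial\R^2_+$ when $\xi_i\in\partial\Sigma$.

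By the classification of the kernel (Lemma \ref{lem:profile_lin} for $\gam_i\notin\N\cup\{0\}$, and \cite{CL2002,Esposito2005} for $\gam_i=0$), $\tilde\phi_i$ is a combination of $z^{\gam_i}_0$ and, for regular points, of $z^0_j$. The orthogonality conditions $\phi_n\in K_\xi^\perp$ pass to the limit and kill the translation components. The key feature of the non-quantized singular points is that there are no translation modes at all, since $\gam_i\notin\N$. Thus $\tilde\phi_i=a_i z_0^{\gam_i}$ for every $i$.

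The main obstacle is showing that all the coefficients $a_i$ vanish; this is the source of the factor $|\ln\eps|$, since no orthogonality to $PZ^0_i$ is imposed. Following \cite{del_pino_singular_2005}, we will test the equation against $PZ^0_i$ and against a suitable modified function $P\tilde Z_i$. Here $\tilde Z_i$ is built from the radial solution of $\Delta w+|y|^{2\gam_i}e^{\tilde\delta^{\gam_i}}w=z^{\gam_i}_0|y|^{2\gam_i}e^{\tilde\delta^{\gam_i}}$, or equivalently from $\partial_\lam\delta_i$, which behaves like $\ln|y|$ at infinity. The asymptotics of the projected functions give $PZ^0_i=Z^0_i+2(1+\gam_i)+O(\lam_i^{-2(1+\gam_i)})$ away from $\xi_i$. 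Combined with the global mean-zero structure, this produces a linear system for $(a_1,\dots,a_{p+q+N})$. Its coefficients are of order $\ln\lam_i\sim|\ln\eps|$ on the diagonal with bounded coupling, and the right-hand side is $O(|\ln\eps|\,\|h_n\|_s)+o(1)$. We therefore conclude $a_i=0$.

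Finally, we upgrade $\tilde\phi_i=0$ to $\|\phi_n\|\to0$. Testing the equation against $\phi_n$ itself gives
\[
1=\|\phi_n\|^2=\sum_i\int\chi_ie^{-\varphi_i}e^{\delta_i}\phi_n^2+\int h_n\phi_n+o(1).
\]
The weighted integral equals $\sum_i\int|y|^{2\gam_i}e^{\tilde\delta^{\gam_i}}\tilde\phi_{n,i}^2\,\d y+o(1)$. This tends to zero because the weight is integrable and decays like $|y|^{-4-2\gam_i}$ at infinity, which gives uniform tail control from the $H^1$ bound. For $\gam_i<0$, Hölder's inequality near the origin handles the singular weight. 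Also $\int h_n\phi_n\le C\|h_n\|_s\to0$ by the Moser–Trudinger embedding (Lemma \ref{lem:M-T2}). This is a contradiction. Existence and uniqueness for each fixed $\eps$ then follow from the a priori bound and Fredholm theory, since $\cL$ is a compact perturbation of $-\Delta_g$ on $K_\xi^\perp$.
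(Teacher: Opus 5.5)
Your outline follows the paper's proof in all essentials. The shared structure is: contradiction with $\|\phi_n\|=1$ and $|\ln\eps_n|\,\|h_n\|_s\to0$; blow-up at each $\xi_i$; Lemma~\ref{lem:profile_lin} (and \cite{Esposito2005} for $\gam_i=0$) plus orthogonality to $PZ^j_i$ to get $\tilde\phi_i=a_iz_0^{\gam_i}$; two tests to force $a_i=0$; and the final contradiction from testing with $\phi_n$.

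The differences are secondary. You carry multipliers $c_{ij}$, whereas \eqref{eq:linprob} is stated without them, so the paper has no analogue of your first step. Your version is the form in which the operator is actually inverted in the reduction (cf.\ \eqref{eq:solution_inf_dim}), at the cost of that extra, easy step. For the logarithmically growing test function you take $P\tilde Z_i$ from the radial solution of $\Delta w+|y|^{2\gam_i}e^{\tilde\delta^{\gam_i}}w=z_0^{\gam_i}|y|^{2\gam_i}e^{\tilde\delta^{\gam_i}}$. The paper instead uses $P\delta_i$, whose expansion is already given by Lemma~\ref{lem:proj_bubble}. Your choice makes the coefficient of $a_i$ automatically nonzero (a multiple of $\int|y|^{2\gam_i}e^{\tilde\delta^{\gam_i}}(z_0^{\gam_i})^2\,\d y$), but you must build and expand a new projected function. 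The paper's choice needs only the explicit integral of $|y|^{2\gam}e^{\tilde\delta^{\gam}}\frac{1-|y|^{2(1+\gam)}}{1+|y|^{2(1+\gam)}}\ln(1+|y|^{2(1+\gam)})$.

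Two statements in your sketch of that step need correcting.

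First, $\partial_\lam\delta_i$ is not an equivalent choice. By Remark~\ref{rk:B.1}, $\lam_i\partial_{\lam_i}P\delta_i=PZ^0_i$ is bounded, so testing with it just repeats the $PZ^0_i$ test. The function with logarithmic growth is $P\delta_i$ itself.

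Second, $\ln\lam_i$ is not a diagonal coefficient in front of $a_i$. The $PZ^0_i$ test uses $PZ^0_i=\chi_i(Z^0_i+2(1+\gam_i))+o(|\ln\eps|^{-1})$ in $C(\Sigma)$ and $\la\phi,PZ^0_i\ra=\int_\Sigma\chi_i|y_{\xi_i}|^{2\gam_i}e^{-\varphi_i}e^{\delta_i}Z^0_i\phi\,\d v_g$. It yields $\int_\Sigma\chi_i|y_{\xi_i}|^{2\gam_i}e^{-\varphi_i}e^{\delta_i}\phi\,\d v_g=O(\|h_n\|_s)+o(|\ln\eps|^{-1})=o(|\ln\eps|^{-1})$. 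In the second test, $\ln\lam_i$ multiplies exactly this integral and so contributes only $o(1)$. What remains is an order-one nonzero constant times $a_i$, equal to $o(1)$. This is the paper's Steps~2--3, and it is exactly where the factor $|\ln\eps|$ is used.
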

	
	\begin{proof}
		We will prove it by contradiction. Suppose Lemma~\ref{lem:invertible} fails, i.e.  there exist $s>1$, a sequence of $\eps_n\rightarrow 0, \xi^n\rightarrow \xi^*$, $h_n\in L^s(\Sigma)$ and $\phi_n\in  \oH \cap K_{\xi}^{\perp}$ solves \eqref{eq:linprob} for $h_n$ satisfying that 	as $n\rightarrow +\infty,$
		\begin{equation}
			\label{eq:asumme_linear} \|\phi_n\|=1\text{ and } |\ln  \eps_n|\|h_n\|_s\to 0. 
		\end{equation}  
		To simplify the notation, we still use the notations $\phi, h,\xi,\eps$ instead of $\phi_{n},h_{n},\xi^n, \eps_n$. 
		
		Define that for $ i=1,\ldots,p+q+N$ , 
		\[ \tilde{\phi}_{i}(y)= \begin{cases}
			\chi\left(\frac{|y|}{ \lam_i r_0}\right)\phi\circ y_{\xi_i}^{-1}(\lam_i y),&  y \in \Omega_i:= \lam_i  B^{\xi_i}_{2r_0}\\
			0&  y\in\R_{\xi_i}\setminus\Omega_i
		\end{cases}. \]
		Then we consider the following weighted spaces for  $i=1,\ldots, p+q+N$
		\[ \rL_{i}:=\Big\{ u: \Big\|  \frac{ 2\sqrt{2}(1+\gam_i) |\cdot|^{\gam_i}}{1+|\cdot|^{2(1+\gam_i)}}  u  \Big\|_{L^2(\R_{\xi_i})} <+\infty\Big\}\]
		and 
		\[ \rH_{i}:=\Big \{u: \|\nabla u\|_{L^2(\R_{\xi_i})}+\Big\| \frac{ 2\sqrt{2}(1+\gam_i) |\cdot|^{\gam_i}}{1+|\cdot|^{2(1+\gam_i)}}  u \Big\|_{L^2(\R_{\xi_i})}<+\infty \Big\}.\]
		\begin{itemize}
			\item[Step 1.]\label{item:step1} {\it For any $i=1,\ldots,p+q+N,$  $\tilde{\phi}_{i}\rightarrow a_{i} \frac{1-|y|^{2(1+\gam_i)}}{1+|y|^{2(1+\gam_i)}} $ for some $a_i \in\R$,  
				weakly in $\rH_{i}$ and strongly in $\rL_{i}$.}
		\end{itemize}
		Applying $\phi$ as a test function for $\cL(\phi)= h$, in view of  $\int_{\Sigma}\phi \, \d v_g=0,$ we deduce that 
		\begin{equation}\label{eq:diff_h}
			\int_{\Sigma} h  \phi  \, \d v_g 	=\la \phi,\phi \ra -\sum_{i=1}^{p+q+N} \int_{\Sigma} \chi_ie^{-\varphi_i} |y_{\xi_i}|^{2\gam_i}  e^{\delta_i} \phi^2 \, \d v_g. 
		\end{equation}
		Considering that 
		$ |\int_\Sigma h\phi \, \d v_g|\leq \|h\|_s\|\phi\|=o(|\ln \eps|^{-1})$ and $\|\phi\|=1,$ we have 
		$\sum_{i=1}^{p+q+N} \| \tilde{\phi}_i\|^2_{\rL_{i}} =\cO(1),$
		as $\eps\to 0.$
		Using the assumption $\|\phi\|=1$ again,   $\|\tilde{\phi}_i\|_{\rH_{i}}^2 \leq \|\phi\|^2\leq 1$. It follows that
		$\tilde{\phi}_i\rightharpoonup \tilde{\phi}^0_i$ in $\rH_{i}$ for some $\tilde{\phi}_i^0\in \rH_{i}$, up to a subsequence. Then, the compact embedding $ \rH_{i} \hookrightarrow  \rL_{i} $ from \cite[Proposition 6.1]{GP2013} 
		implies that $ \tilde{\phi}_i\to \tilde{\phi}_i^0$ strongly in $\rL_{i}.$

		For any $\varphi\in C_c^{\infty}(\R_{\xi_i})$, assume that  $\text{supp }\varphi\subset \B_{R_0}.$
		If $\frac 1 {\lam_i} <\frac {r_0}{R_0}$, then 
		$\text{supp }\nabla \chi(\frac{|y|}{r_0}) \cap \text{supp } \varphi \left(\lam_i  y\right) =\emptyset. $
		By direct calculation, we have 
		\begin{align*}
			\int_{\R_{\xi_i}} \nabla\tilde{\phi}_{i} \nabla \varphi  \, \d y&=
			\int_{B_{2r_0}^{\xi_i}} \nabla( \chi({|y|}/{r_0}) \phi\circ y_{\xi_i}^{-1}(y))\cdot\nabla \varphi\left(\lam_i y\right) \, \d y= \int_{\Sigma}  \la \nabla \phi, \nabla ( \chi_i \varphi(\lam_i y_{\xi_i})) \ra_g \, \d v_g \\
			&=  \int_{\Sigma} \chi_i e^{-\varphi_i} |y_{\xi_i}|^{2\gam_i}e^{\delta_i} \phi\varphi \left(\lam_iy_{\xi_i} \right)\, \d v_g+ \int_{\Sigma}  \chi_i h  \varphi\left(\lam_i y_{\xi_i}\right) \, \d v_g +o(1)\\
			&=\int_{\R_{\xi_i}} \frac { 8(1+\gam_i)^2|y|^{2\gam_i} }{(1+|y|^{2(1+\gam_i)})^2}\tilde{\phi}_{i}(y)\varphi(y) \, \d y+o(1). 
		\end{align*}
		Then, we deduce that  
		$\tilde{\phi}_{i}$ converges to $\phi^0_{i}$, which is a solution of the following linear problem:
		\begin{equation}
			\label{eq:limit_linear}
			\left\{   \begin{aligned}
				-\Delta \phi&= \frac{8(1+\gam_i)^2 |y|^{2\gam_i }}{\left(1+|y|^{2(1+\gam_i)}\right)^2} \phi & & \text { in }\R_{\xi_i}, \\
				\partial_{y_2} \phi&=0 &&\text{ on } \partial\R_{\xi_i}, \\
				\int_{\R_{\xi_i}}&|\nabla \phi(y)|^2 d y<+\infty, &&
			\end{aligned}\right.
		\end{equation}
		in the distributional sense. 
		Using the regularity theory,
		$\tilde{\phi}^0_{i}$ is a $C^2$-smooth solution of \eqref{eq:limit_linear}.
		By Lemma \ref{lem:profile_lin} and \cite[Lemma D.1]{Esposito2005}, we have 
		\[ \tilde{\phi}^0_i(y)= \begin{cases}
			\sum_{j=1}^{\ii(\xi_i)} b_{ij}\frac{4 y_j}{1+|y|^2}+ a_{i} \frac{1-|y|^2}{1+|y|^2} & \text{ for } i=1,\ldots, p+q, \\
			a_{i}\frac{1-|y|^{2(1+\gam_i)}}{1+|y|^{2(1+\gam_i)}} & \text{ for } i=p+q+1,\ldots, p+q+N, 
		\end{cases}\]
		where $a_{i}$ and $b_{ij}\in \R$ are coefficients.
		Then, using  Remark \ref{rk:B.1} and the assumption $\la \phi, PZ^j_i\ra=0$ , we can deduce that $b_{ij}=0$ for $i=1,\ldots, p+q,$ and $j=1,\ldots, \ii(\xi_i)$.
		\begin{itemize}
			\item [Step 2.]\label{item:step2} {\it For $i=1,\ldots
				,p+q+N$, the following estimate holds:  as $\eps\rightarrow 0$
				\begin{equation}
					\label{eq:st2-1} \int_{\Omega_i} \frac{ 8(1+\gam_i)^2|y|^{2\gam_i}}{(1+|y|^{2(1+\gam_i)})^2}\tilde{\phi}_{i}(y) \, \d y =o(|\ln \eps|^{-1}). 
				\end{equation}
			} 
		\end{itemize}
		Applying that $PZ^0_{i}$ as a test function of \eqref{eq:linprob}, we have
		\begin{equation*}
			\int_\Sigma \chi_i |y_{\xi_i}|^{2\gam_i} e^{-\varphi_i}e^{\delta_i} Z^0_i \phi \, \d v_g= \la \phi, PZ^0_i\ra = \sum_j \int_{\Sigma} \chi_j e^{-\varphi_i}|y_{\xi_i}|^{2\gam_j} e^{\delta_j} \phi  PZ^0_i \, \d v_g +\int_\Sigma h PZ^0_j \, \d v_g. 
		\end{equation*}
		Remark \ref{rk:B.1} yields  that 
		$$
		\int_{\Sigma}  \chi_i |y_{\xi_i}|^{2\gam_i} e^{-\varphi_i} e^{\delta_i}  \phi \left( PZ^0_i-Z^0_i\right)  \, \d v_g  =-\int_{\Sigma}h PZ^0_i\, \d v_g + \cO(\eps^{\frac 2 {1+\max\{ 0,  \gam_* \}}} |\ln \eps|)= o(|\ln \eps|^{-1}).  
		$$
		Applying Remark~\ref{rk:B.1}   we derive that 
		\begin{align*}
			&	\quad(\ln  \eps)\int_{\Sigma} \chi_i |y_{\xi_i}|^{2\gam_i}  e^{-\varphi_i}e^{\delta_i}  \phi( PZ^0_i-Z^0_i)  \, \d v_g\\
			&= (\ln  \eps)
			\int_{\Sigma} \chi_i |y_{\xi_i}|^{2\gam_i} e^{-\varphi_i}e^{\delta_i}  \phi(1 + \cO( \eps^{\frac 2 {1+\max\{ 0,  \gam_* \}}} |\ln \eps| ) )  \, \d v_g\\
			&=   (\ln \eps)\int_{\Omega_i}  \frac{16(1+\gam_i)^2 |y|^{2\gam_i}}{\left(1+|y|^{2(1+\gam_i)}\right)^2} \tilde{\phi}_{i}(y)  \, \d y+o(1), 
		\end{align*}
		as $\eps\rightarrow 0.$
		Thus, \eqref{eq:st2-1} follows.
		\begin{itemize}
			\item[Step 3.]\label{item:step3} {\it  Construct the contradiction.}
		\end{itemize}	
		Using $P\delta_i$ as a test function for~\eqref{eq:linprob}, we derive that 
		\begin{equation}\label{eq:test_PU_i}
			\int_{\Sigma} \chi_i |y_{\xi_i}|^{2\gam_i} e^{-\varphi_i}e^{\delta_i} \phi \, \d v_g=\sum_j \int_{\Sigma}  \chi_j |y_{\xi_j}|^{2\gam_j} e^{-\varphi_i} e^{\delta_j}  \phi P\delta_i \, \d v_g +\int_{\Sigma} hP\delta_i \, \d v_g. 
		\end{equation}
		The L.H.S. of~\eqref{eq:test_PU_i} equals  
		$ \int_{\Omega_i}\frac{8(1+\gam_i)^2|y|^{2\gam_i}}{(1+|y|^{2(1+
				\gam_i)})^2}\tilde{\phi}_{i}(y) \, \d y
		=o(|\ln  \eps|^{-1})$, by \hyperref[item:step2]{Step 2}. 
		Lemma~\ref{lem:innner_proj} yields that  $\|P\delta_i\|= \mathcal{O}(|\ln \eps|^{\frac 1 2})$ and 
		$
		\left|\int_{\Sigma} hP\delta_i \, \d v_g \right| 
		=o(1).
		$
		Using Lemma \ref{lem:proj_bubble} and \hyperref[item:step2]{Step 2}, we have 
		\begin{align*}
			\sum_{\mytop{j=1,\ldots, p+q+N}{j\neq i}}\int_{\Sigma}  \chi_j |y_{\xi_j}|^{2\gam_j} e^{-\varphi_j}e^{\delta_j}  \phi P\delta_i \, \d v_g  &= o(1).
		\end{align*}
		Applying Lemma~\ref{lem:proj_bubble}, \hyperref[item:step1]{Step 1} and \hyperref[item:step2]{Step 2},  we deduce that as $\eps\rightarrow 0$
		\begin{align*}
			&\quad \int_{\Sigma} \chi_i |y_{\xi_i}|^{2\gam_i} e^{-\varphi_i}e^{\delta_i} \phi P\delta_i \, \d v_g\\ &=\int_{\Sigma} \chi_i |y_{\xi_i}|^{2\gam_i} e^{-\varphi_i} e^{\delta_i} \phi \Big( \chi_i \cdot(\delta^{\gam_i}_{\lam_i,\xi_i}-\ln  (8 (1+\gam_i)^2 \lam_i^{- 2(1+\gam_i)}))+(1+\gam_i) \varrho(\xi_i)H^g(\cdot, \xi_i)\\
			&\quad+\cO(\eps^{\frac{2}{1+\max\{ 0,\gam_*\}  } } |\ln \eps| )\Big)  \, \d v_g \\
			&=
			\int_{\Omega_i} \frac {8(1+\gam_i)^2 |y|^{2\gam_i}}{\left(1+|y|^{2(1+\gam_i)}\right)^2} \tilde{\phi}_{i}(y)\left(4(1+\gam_i) \ln  \lam_i -2 \ln  (1+|y|^{2(1+\gam_i)})+(1+\gam_i)\varrho(\xi_i)R^g(\xi_i)\right) d y \\
			&\quad+o(1)\\
			&\rightarrow  -2 a_{i} \int_{\R_{\xi_i}} \frac {8(1+\gam_i)^2|y|^{2\gam_i}}{\left(1+|y|^{2(1+\gam_i})\right)^2}\frac{1- |y|^{2(1+\gam_i)}}{1+|y|^{2(1+\gam_i)}}\ln (1+|y|^{2(1+\gam_i)}) \, \d y= \varrho(\xi_i)a_{i}, 
		\end{align*}
		in which the last equality used the fact that  
		$$\int_{\R^2} \frac {8(1+\gam_i)^2 |y|^{2\gam_i}}{\left(1+|y|^{2(1+\gam_i)}\right)^2}\frac{1- |y|^{2(1+\gam_i)}}{1+|y|^{2(1+\gam_i)}}\ln (1+|y|^{2(1+\gam_i)}) \, \d y=-4\pi.$$
		Therefore, $a_i=0$ for all $i=1,\dots,p+q+N$.
		
		Applying  $\phi$ as a test function for \eqref{eq:linprob}, we have 
		\begin{align*}
			1&=  \sum_{i=1}^{p+q+N} \int_\Sigma  \chi_i |y_{\xi_i}|^{2\gam_i} e^{-\varphi_i} e^{\delta_i} \phi^2 \, \d v_g +    \int_{\Sigma}h \phi \, \d v_g=  \sum_{i=1}^{p+q+N}  \|\tilde{\phi}_{i}\|^2_{\rL_i} +o(1)\rightarrow 0,
		\end{align*}
		where we applied that $\tilde{\phi}_i \to  0$ strongly in $\rL_{\xi_i}$. This yields a contradiction.
	\end{proof}
	
	\subsection{The non-linear problem}
	In this part, for fixed $\xi=(\xi_1, \ldots, \xi_{p+q+N})$ and $\eps>0$, we will find a solution $\phi_{\xi,\eps}$ of \eqref{eq:main_L2} by the fixed point theorem and reduce the problem to a finite-dimensional one.

	As $\eps\to 0$, the remainder term $\cR$ defined in \eqref{def:R} is small in $L^s$-space for any $s>1$ sufficiently close to $1$.

	\begin{lemma}\label{lem:est_error}
		For any $\iota\in (0,1)$, 
		there exists $s_0>1$ such that, for any $s\in(1,s_0)$, we have as $\eps\to 0$,
		$
		\left\|\mathcal{R}\right\|_s=\cO(\eps^{\frac{ \iota }{1+\max\{ 0,\gam_*\}}} ), 
		$
		where $\gam_*=\max_{i=1,\ldots, p+q+N} \gam_i$. 	\end{lemma}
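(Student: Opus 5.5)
The plan is to write $\cR$ explicitly using the equation \eqref{eq:proj_bubble} for the projected bubbles. Since $-\Delta_g P\delta_i = \chi_i e^{-\varphi_i}|y_{\xi_i}|^{2\gam_i}e^{\delta_i} - \overline{(\cdot)}$, we get
\[
\cR=\eps^2Ke^{W}-\sum_{i=1}^{p+q+N}\chi_i e^{-\varphi_i}|y_{\xi_i}|^{2\gam_i}e^{\delta_i}-\overline{\Big(\eps^2Ke^W-\sum_{i}\chi_i e^{-\varphi_i}|y_{\xi_i}|^{2\gam_i}e^{\delta_i}\Big)} .
\]
The mean term is bounded by the $L^1$ norm of the first part, so it suffices to estimate $\|\eps^2Ke^W-\sum_i\chi_i e^{-\varphi_i}|y_{\xi_i}|^{2\gam_i}e^{\delta_i}\|_s$. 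I split $\Sigma$ into the balls $U_{r_0}(\xi_i)$ and the complement $\Sigma\setminus\bigcup_i U_{r_0}(\xi_i)$.

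On the complement, I use the expansion of the projected bubble (Lemma~\ref{lem:proj_bubble}): $P\delta_j=(1+\gam_j)\varrho(\xi_j)G^g(\cdot,\xi_j)+\cO(\eps^{2/(1+\max\{0,\gam_*\})}|\ln\eps|)$ away from $\xi_j$. Hence $W$ is uniformly bounded there, and $\eps^2Ke^W=\cO(\eps^2)$. The bubble terms are $\cO(\lam_i^{-2(1+\gam_i)})=\cO(\eps^2)$ since $\chi_i$ vanishes or $|y|$ is bounded below. This region therefore contributes $\cO(\eps^2)$.

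On $U_{r_0}(\xi_i)$, I use the same lemma inside the ball:
\[
P\delta_i=\delta_i-\ln\big(8(1+\gam_i)^2\lam_i^{2(1+\gam_i)}\big)+(1+\gam_i)\varrho(\xi_i)H^g(\cdot,\xi_i)+\cO(\eps^{\frac{2}{1+\max\{0,\gam_*\}}}|\ln\eps|),
\]
with the $j\ne i$ bubbles expanded as Green functions. Combining this with $K=K_i|y_{\xi_i}|^{2\gam_i}$ from \eqref{def:Ki}, the choice \eqref{def:lam_i} of $\lam_i$ and the definition \eqref{def:d_i} of $d_i$ gives
\[
\eps^2Ke^W=|y_{\xi_i}|^{2\gam_i}e^{\delta_i}\,\frac{d_i(x)}{d_i(\xi_i)}\,e^{\cO(\eps^{\frac{2}{1+\max\{0,\gam_*\}}}|\ln\eps|)}.
\]
Then, since $\chi_i=1$ and $e^{-\varphi_i}$ converts $\d v_g$ to $\d y$, the difference on this ball is bounded by
\[
|y|^{2\gam_i}e^{\tilde\delta_{\lam_i}^{\gam_i}}\Big(\big|e^{\ln d_i(x)-\ln d_i(\xi_i)}-1\big|+\cO\big(\eps^{\frac{2}{1+\max\{0,\gam_*\}}}|\ln\eps|\big)\Big),
\]
where $|\ln d_i(x)-\ln d_i(\xi_i)|\le C|y|$ (plus $\cO(|y|^2)$ if gradients cancel; for regular points $\nabla\ln d_i(\xi_i)$ need not vanish since $\xi^0$ is not exactly critical).

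Then I compute the $L^s$ norm after rescaling $y=z/\lam_i$:
\[
\int_{B_{r_0}}\big(|y|^{2\gam_i}e^{\tilde\delta_{\lam_i}}|y|\big)^s\,\d y
=\lam_i^{(2+2\gam_i)s-2-s}\int_{B_{\lam_ir_0}}\Big(\frac{8(1+\gam_i)^2|z|^{2\gam_i+1}}{(1+|z|^{2(1+\gam_i)})^2}\Big)^s\d z .
\]
The $z$-integral converges at infinity for $s$ near $1$ since the decay is $|z|^{-(2\gam_i+3)s}$. At the origin, $|z|^{(2\gam_i+1)s}$ is integrable when $\gam_i>-1$ and $s$ is close to $1$. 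So the $L^s$ norm is $\cO(\lam_i^{-1+2(1+\gam_i)(s-1)/s})$. With $\lam_i\sim\eps^{-1/(1+\gam_i)}$, this is $\eps^{\frac{1}{1+\gam_i}-2(s-1)/s}$, which for $s$ close to $1$ beats $\eps^{\iota/(1+\max\{0,\gam_*\})}$ for any $\iota<1$.

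The $\cO(\eps^{2/(1+\max\{0,\gam_*\})}|\ln\eps|)$ term times $\|e^{\tilde\delta}|y|^{2\gam_i}\|_s=\cO(\lam_i^{2(1+\gam_i)(s-1)/s})$ is even smaller. The cutoff annulus $r_0\le|y|\le 2r_0$ contributes $\cO(\eps^2)$.

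The main obstacle is the careful verification of the expansion of $P\delta_i$ with uniform error near singular points with $\gam_i<0$. Here the weight $|y|^{2\gam_i}$ is unbounded, and the choice of $s_0$ must keep $|z|^{2\gam_i s}$ integrable, which requires $s<1/|\gam_i|$. That is why the statement only holds for $s$ close to $1$ and loses the factor $\iota$. I would rely on Lemma~\ref{lem:proj_bubble} for that expansion and then just track exponents as above.
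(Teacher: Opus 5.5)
Your proposal is correct and is essentially the paper's argument. The paper rewrites $\cR$ through \eqref{eq:proj_bubble}, controls the mean term by H\"older, and invokes Lemma~\ref{lem:diif_e^W_sum_e^u}, whose proof is your computation: the constants cancel by \eqref{def:d_i}--\eqref{def:lam_i}, leaving an exponent error $\cO(|y_{\xi_i}|)+\cO(\eps^{2/(1+\max\{0,\gam_*\})}|\ln\eps|)$, followed by rescaling. The one difference is that the paper puts $\varphi_i$ inside the exponent $\Theta_i$; this correctly absorbs the mismatch $e^{-\varphi_i}-1=\cO(|y_{\xi_i}|)$, which your remark about $e^{-\varphi_i}$ ``converting $\d v_g$ to $\d y$'' glosses over.

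Some bookkeeping slips should be fixed, though none affects the conclusion:
the constant is $\ln\big(8(1+\gam_i)^2\lam_i^{-2(1+\gam_i)}\big)$, not $\ln\big(8(1+\gam_i)^2\lam_i^{2(1+\gam_i)}\big)$;
after $y=z/\lam_i$ you must also rescale $|y|^{2\gam_i s}$, which gives the norms $\cO(\lam_i^{-1+2(s-1)/s})$ and $\cO(\lam_i^{2(s-1)/s})$ rather than your exponents;
and for $\gam_i<-\frac12$ the $z$-integral diverges at infinity when $s$ is close to $1$, since the decay $|z|^{-(2\gam_i+3)s}$ is not integrable, but the resulting growth $\lam_i^{2-(2\gam_i+3)s}$ only turns that piece into $\cO(\lam_i^{-2(1+\gam_i)})=\cO(\eps^2)$.
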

	\begin{proof}
		By Lemma \ref{lem:diif_e^W_sum_e^u} and the H\"{o}lder's inequality, it holds 
		\begin{align*}
			\|\cR\|_s &= \Big\|\sum_{i=1}^{p+q+N} \Delta_g P\delta_i + \eps^2  Ke^{W} - \overline{  \eps^2  K e^{W}  }\Big \|_s\\
			&=  \Big \|\eps^2  Ke^{W}  -\sum_{i=1}^{p+q+N} \chi_i |y_{\xi_i}|^{2\gam_i} e^{-\varphi_i}e^{\delta_i} -\overline{\Big(\eps^2  Ke^{W}  -\sum_{i=1}^{p+q+N} \chi_i |y_{\xi_i}|^{2\gam_i} e^{-\varphi_i} e^{\delta_i} \Big)}\Big\|_s
			\\
			&\leq C \Big \| \eps^2  Ke^{\sum_{i=1}^{p+q+N} P\delta_i}  -\sum_{i=1}^{p+q+N} \chi_i |y_{\xi_i}|^{2\gam_i} e^{-\varphi_i} e^{\delta_i}  \Big\|_s \leq C(\eps^{\frac{ \iota }{1+\max\{ 0,\gam_*\}}}),
		\end{align*} 
		for some constant $C>0.$
	\end{proof}
	Next, we are going to show that the higher order linear operator $S(\phi)$ defined by \eqref{def:S} is bounded on $\oH$ and the operator norm goes to zero as $\eps\to  0$. 
	\begin{lemma}\label{lem:esti_S}
		For any $\iota\in (0,1)$, there exists $s_0> 1$  sufficiently close to $1$  such that for any $s,r \in (1, 2)$ with  $sr\in (1,s_0) $, as $\eps\rightarrow 0$
		\[
		\| S(\phi) \|_s =\cO(\eps^{\frac{ \iota }{1+\max\{0, \gam_*\}}} \| \phi \|  ), \quad \forall \phi\in \oH.
		\]
	\end{lemma}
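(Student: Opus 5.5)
Write $f:=\eps^2 Ke^W-\sum_{i=1}^{p+q+N}\chi_i|y_{\xi_i}|^{2\gam_i}e^{-\varphi_i}e^{\delta_i}$, reading the weight in \eqref{def:S} with the factor $|y_{\xi_i}|^{2\gam_i}$ exactly as in \eqref{eq:proj_bubble} and in the proof of Lemma~\ref{lem:est_error}. Then $S(\phi)=f\phi-\overline{f\phi}$. Since $|\Sigma|_g=1$, Hölder gives $|\overline{f\phi}|\le\|f\phi\|_s$, and hence $\|S(\phi)\|_s\le 2\|f\phi\|_s$. So it suffices to bound $\|f\phi\|_s$.

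By Hölder with exponents $r$ and $r'=r/(r-1)$,
\[
\|f\phi\|_s\le \|f\|_{sr}\,\|\phi\|_{sr'} .
\]
\begin{itemize}
\item Since $\Sigma$ is two-dimensional, the Sobolev embedding $\oH\hookrightarrow L^t(\Sigma)$ holds for every finite $t$. This is Poincaré--Sobolev on $\oH$, i.e.\ functions of zero average, where $\|\phi\|$ is a norm. It gives $\|\phi\|_{sr'}\le C_{s,r}\|\phi\|$.
\item The constant $C_{s,r}$ depends only on $sr'$, which stays bounded once $r\in(1,2)$ is fixed. So uniformity is no issue.
\end{itemize}

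The whole proof therefore reduces to the estimate $\|f\|_{t}=\cO(\eps^{\iota/(1+\max\{0,\gam_*\})})$ for $t=sr\in(1,s_0)$. This is exactly the quantity already controlled in the last line of the proof of Lemma~\ref{lem:est_error}, through Lemma~\ref{lem:diif_e^W_sum_e^u}. I would invoke that lemma with exponent $t$, choosing $s_0$ as provided there for the given $\iota$. I would also check that the bound there is stated in $L^t$ for all $t\in(1,s_0)$, not only for the specific $s$.

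For completeness, here is how the bound on $\|f\|_t$ comes about, which is where the real work sits. One splits $\Sigma$ into the balls $U_{r_0}(\xi_i)$ and the complement.
\begin{itemize}
\item On the complement, $e^W$ is bounded, because $P\delta_i=\cO(1)$ away from $\xi_i$ by Lemma~\ref{lem:proj_bubble}. So $f=\cO(\eps^2)$ there.
\item Inside $U_{r_0}(\xi_i)$, use the expansion of Lemma~\ref{lem:proj_bubble}:
\[
P\delta_i=\delta_i-\ln\bigl(8(1+\gam_i)^2\lam_i^{-2(1+\gam_i)}\bigr)+(1+\gam_i)\varrho(\xi_i)H^g(\cdot,\xi_i)+\cO(\eps^{2/(1+\max\{0,\gam_*\})}|\ln\eps|).
\]
The other $P\delta_j$ equal $(1+\gam_j)\varrho(\xi_j)G^g(\cdot,\xi_j)+o(1)$.
\item Together with the choice \eqref{def:lam_i} of $\lam_i$ and the definition \eqref{def:d_i} of $d_i$, this gives
\[
\eps^2Ke^W=|y_{\xi_i}|^{2\gam_i}e^{-\varphi_i}e^{\delta_i}\bigl(e^{\cO(|y_{\xi_i}|+\eps^{c})}\bigr),
\]
since $K_i(x)d_i(x)/d_i(\xi_i)=1+\cO(|y_{\xi_i}|)$. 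Hence $|f|\lesssim |y|^{2\gam_i}e^{\delta_i}(|y|+\eps^c)$.
\item In rescaled variables $z=\lam_i y$, the $L^t$ norm of this is
\[
\lam_i^{2(1-1/t)}\bigl(\lam_i^{-1}\|\,|z|^{1+2\gam_i}(1+|z|^{2(1+\gam_i)})^{-2}\|_{L^t}+\eps^c\bigr).
\]
\item With $\lam_i\sim\eps^{-1/(1+\gam_i)}$, this is $\cO(\eps^{\iota/(1+\max\{0,\gam_*\})})$ once $t$ is close enough to $1$.
\end{itemize}

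The main obstacle is this exponent bookkeeping. The term $\lam_i^{-1}$ coming from $|y|$ must beat the loss $\lam_i^{2(1-1/t)}$. For negative $\gam_i$ one also needs local integrability of $|z|^{(1+2\gam_i)t}$ and of $|z|^{2\gam_i t}$ near $0$, which again forces $t$ close to $1$. This is where the smallness of $s_0$ and the range $\iota<1$ enter.
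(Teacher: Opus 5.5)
Your proof is correct and follows essentially the same route as the paper. You write $S(\phi)$ as $f\phi$ minus its mean and apply H\"older with exponents $r$ and $r/(r-1)$. You control $\|\phi\|_{sr/(r-1)}$ by $\|\phi\|$; you use the Sobolev--Poincar\'e embedding where the paper cites Moser--Trudinger, which amounts to the same. You bound $\|f\|_{sr}$ by Lemma~\ref{lem:diif_e^W_sum_e^u}, and your extra sketch of that lemma is not needed for this statement.
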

	\begin{proof}
		By Lemma~\ref{lem:diif_e^W_sum_e^u}, the H\"{o}lder's inequality and the Moser-Trudinger inequality, we  derive that as $\eps\rightarrow 0$
		\begin{align*}
			\| S(\phi) \|_s &=\cO\Big( \Big\| \Big( \eps^2 Ke^{W}-\sum_{i=1}^{p+q+N} \chi_i |y_{\xi_i}|^{2\gam_i}e^{-\varphi_i} e^{\delta_i}\Big) \phi\Big\|_{s}\Big)\\
			&= \cO\Big( \Big\| \Big( \eps^2 Ke^{W}-\sum_{i=1}^{p+q+N} \chi_i |y_{\xi_i}|^{2\gam_i}e^{-\varphi_i} e^{\delta_i}\Big) \phi \Big\|_{s}\Big)\\
			&= O\Big( \Big\|\eps^2 K e^{W}-\sum_{i=1}^{p+q+N} \chi_i |y_{\xi_i}|^{2\gam_i}e^{-\varphi_i} e^{\delta_i}  \Big\|_{sr} \| \phi \|_{\frac{sr}{r-1}} \Big) \\
			&= \cO(\eps^{\frac{ \iota}{1+\max\{ 0, \gam_*\}}}  \|\phi\| ), 
		\end{align*}
		for any $s,r\in(1,2)$ with $sr<s_0$, where $s_0>1$ is from Lemma \ref{lem:diif_e^W_sum_e^u}. 
	\end{proof}

	Let 
	\begin{equation}
		\label{eq:def_F} F :\oH\to \R,\quad  u\mapsto \eps^2 K e^u
	\end{equation}
	
	The following lemma shows the asymptotic behavior of the nonlinear term $N(\phi)$ defined by \eqref{def:nonlinear}. 
	\begin{lemma}\label{lem:non_linear_term}
		There exist $c,\eps_0>0$ and  $s_0 > 1$ such that for any $s > 1$, $r > 1$ with $sr \in (1, s_0), \eps\in(0, \eps_0),$
		\begin{equation}\label{eq:3.4-1}
			\|N(\phi)\|_s = \cO\left( \eps^{\frac{1-sr}{(1+\gam_-)sr}}  \, e^{c\|\phi\|^2 } \|\phi\|^{2} \right), 
		\end{equation}
		and
		\begin{equation}\label{eq:3.4-2}
			\|N(\phi^1) - N(\phi^0)\|_s =\cO \left(\eps^{\frac{1-sr}{(1+\gam_-)sr}} \,  e^{c(\|\phi^0\|^2+\|\phi^1\|^2 ) } \|\phi^1 - \phi^0\|(\|\phi^1\| + \|\phi^0\|) \right),
		\end{equation}
		for  any  $\phi, \phi^1, \phi^0 \in \{ \phi \in \oH : \|\phi\| \leq 1 \}$, where $\gam_-=\min\{ 0,\min_{i=1,\ldots, p+q+N}\gam_i\}. $
	\end{lemma}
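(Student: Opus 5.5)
The plan is the standard one for the exponential nonlinearity. We control $\eps^2 K e^W$ in a slightly better Lebesgue space and absorb the $\phi$-dependence through the Moser--Trudinger inequality of Lemma~\ref{lem:M-T2}. Since the mean-value correction in $N(\phi)$ is bounded by the $L^s$ norm of the function itself, it suffices to estimate $\|\eps^2 K e^W(e^\phi-\phi-1)\|_s$.

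\textbf{Pointwise and H\"older step.} We use the elementary inequality $|e^t-t-1|\le \tfrac12 t^2 e^{|t|}$. Then H\"older's inequality with exponents $r$ and $\frac{r}{r-1}$ (after raising to the power $s$) gives
\[
\|N(\phi)\|_s\le C\,\|\eps^2 K e^W\|_{sr}\,\big\|\phi^2 e^{|\phi|}\big\|_{\frac{sr}{r-1}}.
\]
For the second factor we apply H\"older once more and use $e^{|\phi|}\le e^{\phi}+e^{-\phi}$. The Moser--Trudinger inequality (Lemma~\ref{lem:M-T2}, applied with $K\equiv1$ or with the regular version, Lemma~\ref{lem:M-T}) yields $\|e^{|\phi|}\|_t\le C e^{c t\|\phi\|^2}$. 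The Sobolev embedding $\oH\hookrightarrow L^t$ for every $t<\infty$ gives $\|\phi^2\|_t\le C\|\phi\|^2$. Hence the second factor is $\cO(e^{c\|\phi\|^2}\|\phi\|^2)$ uniformly for $\|\phi\|\le1$.

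\textbf{Main step: the bound $\|\eps^2Ke^W\|_{sr}=\cO(\eps^{\frac{1-sr}{(1+\gam_-)sr}})$.} By Lemma~\ref{lem:diif_e^W_sum_e^u}, $\eps^2Ke^W$ differs from $\sum_i\chi_i|y_{\xi_i}|^{2\gam_i}e^{-\varphi_i}e^{\delta_i}$ by a term that is $o(1)$ in $L^{sr}$ once $sr$ is close to $1$. So it suffices to compute each bubble's $L^{t}$ norm, $t=sr$, in the chart. Substituting $y=\lam_i^{-1}z$ gives
\[
\int |y|^{2\gam_i t}\,\frac{\lam_i^{2(1+\gam_i)t}}{(1+\lam_i^{2(1+\gam_i)}|y|^{2(1+\gam_i)})^{2t}}\,\d y
=\lam_i^{2(1+\gam_i)t-2\gam_i t-2}\int\frac{|z|^{2\gam_i t}}{(1+|z|^{2(1+\gam_i)})^{2t}}\,\d z.
\]
The last integral is finite provided $2\gam_i t>-2$, which holds for $t$ close to $1$ because $\gam_i>-1$. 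The power of $\lam_i$ is therefore $\lam_i^{2(t-1)}$, and its $1/t$-th root is $\lam_i^{2(t-1)/t}$. Using \eqref{def:lam_i}, $\lam_i=(d_i\eps^2)^{-1/(2(1+\gam_i))}$ with $d_i$ bounded above and below on $\Dc$, so
\[
\lam_i^{2(t-1)/t}\asymp \eps^{\frac{2(1-t)}{(1+\gam_i)t}}.
\]
Since $1-t<0$, the worst case is the smallest $1+\gam_i$, that is $1+\gam_-$ when some $\gam_i<0$, and $1$ otherwise. The stated exponent (with $1$ in place of $2$) is then an admissible, weaker bound for $\eps$ small. This is the only step requiring care: the bubble's integrability at the singular point for negative $\gam_i$ must be checked, which is why we take $sr<s_0$.

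\textbf{Lipschitz estimate.} For \eqref{eq:3.4-2} we write
\[
e^{\phi^1}-\phi^1-e^{\phi^0}+\phi^0=\int_0^1\big(e^{\phi^\theta}-1\big)\,\d\theta\,(\phi^1-\phi^0),\qquad \phi^\theta=\theta\phi^1+(1-\theta)\phi^0,
\]
and use $|e^{t}-1|\le|t|e^{|t|}$ with $|\phi^\theta|\le|\phi^0|+|\phi^1|$. The same H\"older, Moser--Trudinger and Sobolev chain, now with three factors, bounds the $\phi$-part by $e^{c(\|\phi^0\|^2+\|\phi^1\|^2)}\|\phi^1-\phi^0\|(\|\phi^0\|+\|\phi^1\|)$. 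The $\eps$-factor is the same $\|\eps^2Ke^W\|_{sr}$ bound as before.
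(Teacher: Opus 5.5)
Your argument follows the same route as the paper's proof. You bound $e^{\phi}-\phi-1$ pointwise or by the mean value theorem, split off $\|\eps^2Ke^W\|_{sr}$ by H\"older, control the $\phi$-factor by Moser--Trudinger and Sobolev, and then estimate $\|\eps^2Ke^W\|_{sr}$ via Lemma~\ref{lem:diif_e^W_sum_e^u} and the $L^{sr}$ norms of the bubbles. Everything up to and including your change of variables is correct. The gap is the last sentence of your main step, where the comparison of exponents goes the wrong way. For $t=sr>1$ we have $1-t<0$, so $\eps^{\frac{1-t}{(1+\gam_-)t}}\to+\infty$, and
\[
\eps^{\frac{2(1-t)}{(1+\gam_-)t}}=\Big(\eps^{\frac{1-t}{(1+\gam_-)t}}\Big)^2\gg\eps^{\frac{1-t}{(1+\gam_-)t}}\qquad(\eps\to0).
\]
So the exponent with $1$ in the numerator is a \emph{stronger} claim than the one you derived. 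Your bound $\|\eps^2Ke^W\|_{sr}=\cO(\eps^{\frac{2(1-sr)}{(1+\gam_-)sr}})$ does not imply it. What you have actually proved is \eqref{eq:3.4-1}--\eqref{eq:3.4-2} with $\frac{1-sr}{(1+\gam_-)sr}$ replaced by $\frac{2(1-sr)}{(1+\gam_-)sr}$.

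A sharper argument cannot fix this, because your computation is sharp and $N$ saturates it. The $i$-th bubble has $L^t$ norm $\asymp\lam_i^{2(t-1)/t}\asymp\eps^{\frac{2(1-t)}{(1+\gam_i)t}}$. Now take $\phi=\eta\psi$ with $\psi$ smooth, of zero mean, $|\psi|\ge c_0>0$ near $\xi_i$, and $\eta>0$ small but fixed. On $\{|y_{\xi_i}|<\lam_i^{-1}\}$ one has $\eps^2Ke^W(e^\phi-\phi-1)\ge c\,\eta^2|y_{\xi_i}|^{2\gam_i}e^{\delta_i}$, while the mean is $\cO(\eta^2)$. Hence $\|N(\phi)\|_s\gtrsim\eta^2\eps^{-\frac{2(s-1)}{(1+\gam_i)s}}$. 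Whenever some $\gam_i<1$ (a regular point, or the index realizing $\gam_-<0$), this contradicts the stated bound for fixed $s$ and $r\downarrow1$.

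The paper's proof makes the same slip. Its last display asserts that the bubble contribution to $\|\eps^2Ke^W\|_{sr}$ is $\cO(\eps^{\frac{1-sr}{(1+\gam_-)sr}})$, and your change of variables shows this is false as soon as some $\gam_i<1$. The doubled exponent is, however, all that is used later. Theorem~\ref{thm2_p} and Theorem~\ref{thm:expansion_E_reduced} only need this exponent plus $\iota_0$ (resp.\ plus $2\iota_0-1>0$) to be positive, which remains true after taking $sr$ closer to $1$. So you should state and prove the lemma with $\frac{2(1-sr)}{(1+\gam_-)sr}$ rather than claim the printed exponent.
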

	\begin{proof}
		Firstly, we prove the estimate \eqref{eq:3.4-2}. The estimate \eqref{eq:3.4-1} then follows by taking  $\phi^0=0$ and $\phi^1=\phi$. 
		Observe that 
		\begin{align*}
			F(W+\phi^1)-F(W+\phi^0)- F'(W)(\phi^1-\phi^0) 
			&
			= \eps^2 K e^{W+\phi^1} -\eps^2 Ke^{W+\phi^0}-\eps^2 K e^{W}(\phi^1-\phi^0). 
		\end{align*}
		Applying the mean value theorem,  we have for any $s>1$
		\begin{align*}
			&\quad \|F(W+\phi^1)-F(W+\phi^0)- F'( W)(\phi^1-\phi^0)\|_{s}\\
			&
			= \|(F'(W+ \theta\phi^1+(1-\theta)\phi^0) -F'(W)) (\phi^1-\phi^0)\|_{s}\\
			&=\|F^{\prime\prime}(W+ \theta'\theta\phi^1+\theta' (1-\theta)\phi^0) (\theta\phi^1+(1-\theta)\phi^0) (\phi^1-\phi^0)\|_{s},
		\end{align*}
		for some $\theta, \theta'\in (0,1).$
		Via the H\"{o}lder's inequality, the Sobolev's inequality, and the  Moser-Trudinger inequality, we deduce  that 
		\begin{align*}
			&\quad	\|F^{\prime\prime}(W+ \theta'\theta\phi^1+\theta' (1-\theta)\phi^0) (\theta\phi^1+(1-\theta)\phi^0) (\phi^1-\phi^0)\|_s \\
			&\leq C \sum_{i=0}^1
			\left(  \int_{\Sigma}
			\eps^{2s} K^s	e^{sW}(  e^{|\phi^0|+|\phi^1|}  |\phi^1-\phi^0||\phi^i| ) ^s \, \d v_g \right)^{\frac 1 s}  \\
			&\leq   C \sum_{i=0}^1 \left(\int_{\Sigma} \eps^{2sr}K^{sr}e^{sr W} \, \d v_g\right)^{\frac{1}{sr}} \left( \int_{\Sigma}   e^{st' (|\phi^0|+|\phi^1)} \, \d v_g\right)^{\frac{1}{st'}} \left(\int_{\Sigma} |\phi^1-\phi^0|^{st} |\phi^i|^{st} \, \d v_g\right)^{\frac{1}{st}}  \\
			&\leq C \sum_{i=0}^1 \| \eps ^2 Ke^{ W} \|_{sr} e^{\frac{st'}{8\pi}(\|\phi^1_1\|^2+\|\phi^0_1\|^2)} \|\phi^1-\phi^0\| \|\phi^i\|,
		\end{align*}
		where $ r,t',t \in (1, +\infty), {  \frac{1}{r}+\frac{1}{t'}+\frac{1}{t}=1}$.
		Applying Lemma~\ref{lem:diif_e^W_sum_e^u}, we deduce that 
		\begin{align*}
			\|\eps^2 Ke^{W}\|_{sr}&\leq \Big\| \eps^2 Ke^{W} -\sum_{i=1}^{p+q+N} \chi_i |y_{\xi_i}|^{2\gam_i}e^{-\varphi_i} e^{\delta_i} \Big\|_{sr}+ \sum_{i=1}^{p+q+N} \Big\|  \chi_i |y_{\xi_i}|^{2\gam_i}e^{-\varphi_i} e^{\delta_i}   \Big\|_{sr}\\
			&\leq\cO( \eps^{\frac{ 2-sr}{(1+\gam_*)sr}}+ \eps^{\frac{ 2+2 sr \gam_-}{(1+\gam_*)sr}} + \eps^2  + \eps^{\frac{1-sr}{(1+\gam_-)sr}})=\cO( \eps^{\frac{1-sr}{(1+\gam_-)sr}}) . 
		\end{align*}
		By taking $c=\frac{st'}{8\pi}$, the estimate \eqref{eq:3.4-2} is concluded. 
	\end{proof}
	For fixed  $\xi\in M_{\sigma,\xi^*}$, we will find $\phi_{\xi,\eps}$ to solve the problem~\eqref{eq:main_L2} in $K^{\perp}_{\xi}$, i.e.  
	\begin{equation}
		\label{eq:toda_inf_dim} 
		\phi=\Pi_{\xi}^{\perp}\circ \cL^{-1}(S(\phi)+N(\phi)+\cR)
	\end{equation}
	for $\phi \in K_{\xi}^{\perp}$, where $\Pi_{\xi}^\perp: \oH\to K_{\xi}^\perp$ is the orthogonal projection from $\oH$ onto $K^\perp_{\xi}.$
	\begin{theorem}~\label{thm2_p} Let $\Dc$ be a compact subset of $M_{\sigma,\xi^*}$,
		and $\xi=(\xi_1,\ldots,\xi_{p+q+N})=(\xi^0,Q_1)\in \Dc$.  Assume that one of the following conditions holds:
		\begin{equation}
			\label{condi:i-ii} \text{ (i) } p+q=0; \quad  \text{ or } \quad \text{ (ii) } p+q\neq 0, \gam_*<1.
		\end{equation}
		There exist $s_0>1, \eps_0>0$ and $R>0$ (uniformly in $\xi$) such that for any $s\in (1, s_0)$ and  any $\eps\in (0,\eps_0)$ there is a unique $\phi_{\xi,\eps}\in K^{\perp}_{\xi}$ solves~\eqref{eq:toda_inf_dim}
		satisfying that 
		\[ \|\phi_{\xi,\eps}\|\leq   R \eps^{\iota_0 } |\ln \eps|, \]
		where $\iota_0=
		\frac  3 4  $ when   $ p+q=0$; $  \iota_0=\frac 1 2 ( \frac 1 2 + \frac 1 {1+\max\{0, \gam_*\}})  $ when $   p+q\neq 0$. 
		
		Furthermore, 
		the map $\xi^0\mapsto \phi_{\xi,\eps}$ is $C^1$-map with respect to $\xi^0\in M_{\sigma,\xi^*}^0$.
	\end{theorem}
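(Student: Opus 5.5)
We would prove this by the contraction mapping principle applied to the operator
\[
\mathcal T_{\xi,\eps}(\phi):=\Pi_{\xi}^{\perp}\circ \cL^{-1}\bigl(S(\phi)+N(\phi)+\cR\bigr)
\]
on the ball $\mathcal B:=\{\phi\in K_\xi^\perp:\|\phi\|\le R\eps^{\iota_0}|\ln\eps|\}$. By Lemma~\ref{lem:invertible}, $\Pi^\perp_\xi\cL^{-1}$ is well defined on $L^s(\Sigma)$ for $s>1$ close to $1$, with norm $\le C|\ln\eps|$, uniformly for $\xi\in\Dc$. Hence for $\phi\in\mathcal B$ we have
\[
\|\mathcal T(\phi)\|\le C|\ln\eps|\bigl(\|S(\phi)\|_s+\|N(\phi)\|_s+\|\cR\|_s\bigr).
\]
We then estimate the three terms separately.

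\emph{The remainder $\cR$.} By Lemma~\ref{lem:est_error}, $\|\cR\|_s=\cO(\eps^{\iota/(1+\max\{0,\gam_*\})})$ for any $\iota<1$. In case (ii) we choose $\iota$ so that $\iota/(1+\max\{0,\gam_*\})\ge\iota_0$. This is possible because $\iota_0=\frac12(\frac12+\frac1{1+\max\{0,\gam_*\}})<\frac1{1+\max\{0,\gam_*\}}$, which uses $\gam_*<1$. In case (i) only singular bubbles are present, no orthogonality is needed, and we expect the error in Lemma~\ref{lem:diif_e^W_sum_e^u} to be at least of order $\eps^{3/4}$ after taking $s$ close to $1$. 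We would check this directly: the non-quantized bubbles have no translation kernel, and the centres are fixed at $Q_1$. This gives $\|\cR\|_s\le C_0\eps^{\iota_0}$, and we take $R=2CC_0$.

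\emph{The linear term $S$.} By Lemma~\ref{lem:esti_S}, $\|S(\phi)\|_s=o(|\ln\eps|^{-1})\|\phi\|$, so $C|\ln\eps|\,\|S(\phi)\|_s\le\frac14\|\phi\|$, and the same bound gives the Lipschitz estimate since $S$ is linear.

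\emph{The nonlinear term $N$.} By Lemma~\ref{lem:non_linear_term},
\[
\|N(\phi)\|_s\le C\eps^{-\tau}\|\phi\|^2,\qquad \tau=\frac{sr-1}{(1+\gam_-)sr},
\]
and $\tau$ can be made arbitrarily small by taking $sr\downarrow1$. On $\mathcal B$ this gives $C|\ln\eps|\,\|N(\phi)\|_s\le C\eps^{2\iota_0-\tau}|\ln\eps|^3\ll\eps^{\iota_0}|\ln\eps|$. The difference estimate \eqref{eq:3.4-2} gives a Lipschitz constant $C|\ln\eps|^2\eps^{\iota_0-\tau}R=o(1)$.

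Combining the three bounds, $\mathcal T$ maps $\mathcal B$ into itself and is a contraction with constant $\le\frac12$ for $\eps<\eps_0$. This yields existence and uniqueness of $\phi_{\xi,\eps}$ in $\mathcal B$. All constants are uniform in $\xi\in\Dc$ because Lemmas~\ref{lem:invertible}--\ref{lem:non_linear_term} are uniform there.

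\emph{$C^1$ dependence on $\xi^0$.} We apply the implicit function theorem to
\[
G(\xi^0,\phi):=\phi-\Pi^\perp_\xi\cL^{-1}\bigl(S(\phi)+N(\phi)+\cR\bigr).
\]
Since $K_\xi^\perp$ moves with $\xi$, we first transport it to a fixed space. We use $\Pi^\perp_\xi$ composed with $\Pi^\perp_{\xi'}$, which is an isomorphism for nearby $\xi,\xi'$, or equivalently we write the equation as
\[
\phi-\cL^{-1}(S+N+\cR)=\sum c_{ij}PZ_i^j \quad\text{with}\quad \la\phi,PZ^j_i\ra=0.
\]
The data $P\delta_i$, $PZ_i^j$, $\lam_i(\xi)$, $\chi_i$ and $\varphi_i$ depend smoothly on $\xi^0$, because the isothermal charts depend smoothly on the centre. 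Consequently $G$ is $C^1$. Its $\phi$-derivative is $\mathrm{Id}-\Pi^\perp\cL^{-1}(S+N'(\phi))$, which is $\mathrm{Id}$ plus an operator of norm $o(1)$ by the estimates above, hence invertible. The implicit function theorem, combined with the uniqueness in $\mathcal B$, gives that $\xi^0\mapsto\phi_{\xi,\eps}$ is $C^1$.

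The main obstacle is the exponent bookkeeping. The $|\ln\eps|$ loss from Lemma~\ref{lem:invertible} and the $\eps^{-\tau}$ loss in $N$ must be absorbed, and the size of $\cR$ must beat $\eps^{\iota_0}$. This is exactly where $\gam_*<1$ enters in case (ii), and where in case (i) we must verify the $\eps^{3/4}$ error rate from the expansion of the projected bubbles.
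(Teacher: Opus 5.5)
Your argument is the paper's. It runs a Banach fixed point for $\Pi_\xi^\perp\cL^{-1}(S(\phi)+N(\phi)+\cR)$ on the ball of radius $R\eps^{\iota_0}|\ln\eps|$, using Lemmas~\ref{lem:invertible}, \ref{lem:est_error}, \ref{lem:esti_S} and \ref{lem:non_linear_term}, and then the implicit function theorem. Your Neumann-series treatment of $\partial_\phi G$ is equivalent to the paper's lower bound $\frac{c}{2|\ln\eps|}\|\psi\|$ for $\partial_\phi\Phi$, and you are more careful than the paper about the moving space $K_\xi^\perp$. Case (ii) matches the paper exactly. The paper takes $\iota=\frac{3+\max\{0,\gam_*\}}{4}$, so that $\iota/(1+\max\{0,\gam_*\})=\iota_0$, and $\iota<1$ is precisely the condition $\gam_*<1$.

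The step that fails as written is case (i). Fixing the centres at $Q_1$ and the absence of a translation kernel affect only the invertibility of $\cL$, not the size of $\cR$. Look at the expansion of $\Theta_i$ in the appendix error estimate, computed in the chart $y_{\xi_i}$. Its first-order Taylor term $\lam_i^{-1}\nabla(\ln d_i+\varphi_i)(\xi_i)\cdot y$ has no reason to vanish at a prescribed singular point, because no criticality condition is imposed there. For $s$ close to $1$ it generically contributes a piece of size $\gtrsim\lam_i^{-1}\asymp\eps^{1/(1+\gam_i)}$ to $\|\cR\|_s$. This is not $\cO(\eps^{3/4})$ once $\gam_*>\frac13$. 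Moreover, $\|\phi_{\xi,\eps}\|$ is then generically of the same order, so no sharper argument recovers $\eps^{3/4}$.

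What Lemma~\ref{lem:est_error} actually provides is the exponent $\iota/(1+\max\{0,\gam_*\})$ with $\iota<1$. The paper's own choice $\iota=\frac34$ in case (i) therefore only gives $\eps^{3/(4(1+\max\{0,\gam_*\}))}$, which equals $\eps^{3/4}$ only when $\gam_*\le 0$. For $0<\gam_*<\frac13$ you can still reach $\eps^{3/4}$ by taking $\iota=\frac34(1+\gam_*)<1$. For $\gam_*\ge\frac13$ you should replace $\iota_0=\frac34$ by $\frac{3}{4(1+\gam_*)}$, or by any positive exponent below $\frac{1}{1+\gam_*}$. Your contraction and implicit-function arguments go through verbatim with any positive $\iota_0$. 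When $p+q=0$, the later coefficient and derivative estimates, which are the only place $\iota_0>\frac12$ is needed, never arise.
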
 
	\begin{proof}
		Given that $\xi\in \Dc$, we define the linear operator 
		\[ \cT_{\xi,\eps}(\phi):=\Pi_{\xi}^{\perp}\circ \cL^{-1}(S(\phi)+N(\phi)+\cR)\quad  \text{ in } K_\xi^\perp. \]
		
		For any $\phi\in K_{\xi}^{\perp}$, 
		Lemma \ref{lem:invertible}-\ref{lem:non_linear_term} yield that   for some constants $s_0>1$, $C_0, C, c>0$
		it holds 
		\begin{eqnarray*}
			\|	\cT_{\xi,\eps}(\phi)\|&\leq& C |\ln  \eps|\| S(\phi)+N(\phi)+\cR\|_s\\
			&\leq & C_0 |\ln \eps| ( \eps^{\iota/(1+\max\{ 0,\gam_*\})} \|\phi\|+ \eps^{ \frac{1-s r'}{(1+\gam_-)s r'}}  e^{c\|\phi\|^2 } \|\phi\|^{2} + \eps^{\iota/(1+\max\{ 0, \gam_*\})} ),
		\end{eqnarray*}
		for any $s, r'\in(1,2)$   sufficiently close to $1.$
		In case~(i) we take $\iota=\frac 3 4 $, while in case~(ii) $\iota=\frac{  3 +\max\{ 0, \gam_*\}}{4}\in(0,1)$.
		
		We fix $R=3C_0$ and assume that  $sr'\in (1, \min\{ s_0,  \frac{2}{1-\gam_-}, \frac 3 2  \})$. It follows that 
		\[ 	\|	\cT_{\xi,\eps}(\phi)\|\leq  C_0 |\ln \eps| ( \eps^{\iota_0} \|\phi\|+ \eps^{ \frac{1-s r'}{(1+\gam_-)s r'}}  e^{c\|\phi\|^2 } \|\phi\|^{2} + \eps^{\iota_0} ). \]
		Then, we fix  $s, r'>1$  sufficiently close to $1$ such that 
		\[\frac{1-s r'}{(1+\gam_-)s r'} +\iota_0  >0. \]
		There exists $\eps_1>0$ sufficiently small  such that for any $\eps\in (0,\eps_1)$ we have 
		\[ \max\left\{3C_0 \eps^{\iota_0 }|\ln \eps|, (3C_0)^2 e^{c} \eps^{\frac{1-s r'}{(1+\gam_-)s r'}  +\iota_0 } |\ln \eps|^2 , 3 C_0  \eps^{\iota_0 }|\ln \eps| \right\}\leq 1. \]
		Therefore, for any   $\phi\in  \{ \phi\in K_{\xi}^{\perp}: \|\phi\|\leq R \eps^{\iota_0 }|\ln \eps| \}$ it follows that 
		$$\|\cT_{\xi,\eps}(\phi)\|\leq R \eps^{\iota_0}|\ln \eps|  .$$
		Furthermore, for any  $\phi^0,\phi^1\in \{ \phi\in K_{\xi}^{\perp}: \|\phi\|\leq R\eps^{\iota_0}|\ln \eps|\}$,
		by Lemmas~\ref{lem:esti_S} and \ref{lem:non_linear_term}, we obtain:
		\begin{align*}
			\|\cT_{\xi,\eps}(\phi^1)-\cT_{\xi,\eps}(\phi^0)\|&\leq C |\ln  \eps| \cdot \| S(\phi^1-\phi^0)+N(\phi^1)-N(\phi^0)\|_s\\
			&\leq C_1|\ln \eps|\left( \eps^{\iota_0}\|\phi^1-\phi^0\|  + R\eps^{\frac{1-sr'}{(1+\gam_-)sr'}+\iota_0}|\ln \eps|\|\phi^1 - \phi^0\|\right). 
		\end{align*}
		Since $ \frac{1-sr'}{sr'}+\iota_0 >0$, there exists $\eps_2\in (0, \eps_1)$ such that for any $\eps\in(0,\eps_2)$, 
		\[ \max\left\{ C_1 \eps^{\iota_0} |\ln \eps| , C_1R\eps^{\frac{1-sr'}{(1+\gam_-)sr'}+\iota_0 }|\ln \eps|^2 \right\}\leq \frac 1 4. \]
		Letting 
		$\eps_0:=\eps_2$, 
		we conclude that for any   $\cT_{\xi,\eps}$ is contract mapping on $\{ \phi\in K_{\xi}^{\perp}: \|\phi\|\leq R\eps^{\iota_0}|\ln \eps|\}$. 
		By the Banach fixed-point theorem, we deduce that there exists a unique solution  $\phi_{\xi,\eps}$ in $\{ \phi\in K_{\xi}^{\perp}: \|\phi\|\leq R\eps^{\iota_0}|\ln \eps|\}$ such that  $\phi_{\xi,\eps}$ solves the problem~\eqref{eq:toda_inf_dim}.
		
		Denote that $\Dc^0:= \{ \xi^0\in M^0_{\sigma,\xi^*}: (\xi^0,Q_1)\in \Dc\}$. 
		Let $F(u)$ be defined by~\eqref{eq:def_F}.
		We define a function $\Phi: \Dc^0\times K^\perp_{\xi}\rightarrow K^\perp_{\xi}$, $(\xi^0,\phi)\mapsto \phi+\Pi_{\xi}^{\perp}(W-i^*\circ F(W+\Pi_{\xi}^{\perp}(\phi) ) ).$
		It is clear that 
		$\Phi(\xi^0,\phi_{\xi,\eps})=0$. For    any $\psi\in K^\perp_{\xi}$, we have 
		$\frac{\partial \Phi}{\partial\phi}(\xi^0,\phi_{\xi,\eps})(\psi)
		=\psi- \Pi_{\xi}^{\perp}\circ i^*(F^{\prime}(W+\phi_{\xi,\eps})(\Pi_{\xi}^{\perp}\psi)).$
		
		Next, we will show that 
		$ \frac{\partial \Phi}{\partial\phi}(\xi^0,\phi_{\xi,\eps})$ is non-degenerate.
		For $\psi\in \oH$, we have 
		\begin{align*}
			\frac{\partial \Phi}{\partial \phi}(\xi^0,\phi_{\xi,\eps})(\psi)&= \Pi_{\xi}^\perp \Big( \psi -i^*\Big(\sum_{i=1}^{p+q+N} \chi_i e^{-\varphi_i} e^{\delta_i} \psi \Big)\Big)-\Pi_{\xi}^{\perp}\circ i^*\circ S (\Pi_{\xi}^{\perp}\psi)\\
			&\quad
			- \Pi_{\xi}^{\perp}\circ i^*\circ  (F^{\prime}(W+\phi_{\xi,\eps})-F^{\prime}(W))
			(\Pi_{\xi}^{\perp}\psi)\\
			&= \Pi_{\xi}^\perp \circ i^*\circ  \cL(\Pi_{\xi}^\perp \psi )-\Pi_{\xi}^{\perp}\circ i^*\circ S (\Pi_{\xi}^{\perp}\psi)\\
			&\quad
			- \Pi_{\xi}^{\perp}\circ i^*\circ  (F^{\prime}(W+\phi_{\xi,\eps})-F^{\prime}(W))
			(\Pi_{\xi}^{\perp}\psi). 
		\end{align*}
		By the mean value theorem, there exists $\theta\in (0,1)$ such that 
		\begin{equation*}
			\|(F^{\prime}(W+\phi_{\xi,\eps})-F^{\prime}(W))\Pi_{\xi}^{\perp}\psi \|_{s}= \|F^{\prime\prime}(W+\theta\phi_{\xi,\eps})( \phi_{\xi,\eps})(\Pi_{\xi}^{\perp}\psi)\|_s \leq C \eps^{\frac{1-sr}{(1+\gam_-)sr}}\|\phi_{\xi,\eps}\|  \|\Pi_{\xi}^{\perp}\psi\|.
		\end{equation*}

		By Lemma~\ref{lem:invertible} and Lemma~\ref{lem:esti_S}   we have  for some constant $c>0$
		\begin{align*}
			\quad \Big\|	\frac{\partial \Phi}{\partial\phi}(\xi^0,\phi_{\xi,\eps})(\psi)\Big\|
			&\geq c\|\cL(\Pi_{\xi}^{\perp}\psi)\| - \|S\|\|\Pi_{\xi}^{\perp}\psi\|- \|(F^{\prime}(W+\phi_{\xi,\eps})-F^{\prime}(W))\Pi_{\xi}^{\perp}\psi\|_s\\
			&\geq\frac{c}{|\ln \eps|}\|\Pi_{\xi}^{\perp}\psi\| - \mathcal{O}(\eps^{\iota_0}\|\Pi_{\xi}^{\perp}\psi\|)+\mathcal{O}(\eps^{\frac{1-sr}{(1+\gam_-)sr}} \|\phi_{\xi,\eps}\|\|\Pi_{\xi}^{\perp}\psi\|)\\
			&\geq\frac{c}{2|\ln \eps|}\|\psi\|,
		\end{align*}
		for $s, r>1$ sufficiently close to $1$. 
		Hence, we obtain that $\frac{\partial \Phi}{\partial\phi}(\xi^0,\phi_{\xi,\eps})$ is invertible in $K^\perp_{\xi}$ with 
		$$\Big \| \Big(\frac{\partial \Phi}{\partial\phi}(\xi,\phi_{\xi,\eps})\Big)^{-1}\Big\|\leq \frac 2 c |\ln  \eps|.$$
		By the implicit function theorem, we have 
		$\xi^0\mapsto \phi_{\xi,\eps}$ is $C^1$-differentiable. 
	\end{proof}

	\section{Reduced functional and its expansion}\label{sec:4}
	Via Theorem \ref{thm2_p}, the problem \eqref{eq:main_L2} is reduced to be a finite-dimensional one   with $\xi:=(\xi^0,Q_1)\in  \Xi_{p,q}^\prime\times \{ \xi^*_{p+q+1}\}\times \ldots \times \{ \xi^*_{p+q+N}\}$. 
	By substituting  $W+\phi_{\xi,\eps}$ into the energy functional $E_\eps$, we obtain the reduced functional $$\tilde{E}_\eps(\xi^0)= E_\eps(W+\phi_{\xi,\eps})
	= \frac 1 2 \int_\Sigma |\nabla(W+\phi_{\xi,\eps})|_g^2\, \d v_g+ \eps^2 \int_\Sigma Ke^{W+\phi_{\xi,\eps}}\, d v_g,$$ which is defined on $\Xi_{p,q}^\prime.$ Before studying the asymptotic behavior of $\tilde{E}_\eps$, we expand  on $E_\eps(W)$ as follows: 
	\begin{lemma}\label{lem:key_energy_app}
		Given $m\in \N$ and $(p,q,Q_1)\in \cI_m$, we assume that \eqref{def:d_i}-\eqref{def:lam_i} hold. Set $\rho_*:=4\pi n_m$. Then there exists $\eps_0>0$ such that, for any $\eps\in(0,\eps_0)$, the following expansion holds as $\eps\to 0$:
		\begin{equation*}
			E_\eps(W)= (3\ln  2)\rho_* -2\rho_* \ln  \eps +\sum_{i=1}^{p+q+N} 2(1+\gam_i)\varrho(\xi_i) \ln (1+\gam_i)  -\frac 1 2 \ff(\xi^0) +o(1).
		\end{equation*}
		and 	for $i=1,\ldots, p+q,$ and $j=1,\ldots,\ii(\xi_i)$
		$$\partial_{(\xi_i)_j}	E_{\eps}(W)
		=-\frac 1 2 \partial_{(\xi_i)_j} \ff (\xi^0)+o(1),$$ 
		which are convergent  in  $C(M^0_{\sigma,\xi^*})$ uniformly for any $\xi$ in any compact subset of $M^0_{\sigma,\xi^*}$. 
	\end{lemma}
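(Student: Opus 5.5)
We split $E_\eps(W)=\frac12\|W\|^2-\eps^2\int_\Sigma Ke^W\,\d v_g$ and expand the two pieces separately, using the projected-bubble expansions from the appendix (Lemma \ref{lem:proj_bubble}, Lemma \ref{lem:innner_proj}) and the estimate of Lemma \ref{lem:diif_e^W_sum_e^u}.

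\textbf{Nonlinear term.} By Lemma \ref{lem:diif_e^W_sum_e^u}, $\eps^2Ke^W=\sum_i \chi_i|y_{\xi_i}|^{2\gam_i}e^{-\varphi_i}e^{\delta_i}+o(1)$ in $L^1$. Each bubble carries mass $\int_{\R_{\xi_i}}|y|^{2\gam_i}e^{\tilde\delta^{\gam_i}_{\lam_i}}\,\d y=(1+\gam_i)\varrho(\xi_i)$ up to $o(1)$. Hence
\[
\eps^2\int_\Sigma Ke^W\,\d v_g=\sum_i(1+\gam_i)\varrho(\xi_i)+o(1)=\rho_*+o(1),
\]
using $(p,q,Q_1)\in\cI_m$.

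\textbf{Dirichlet term.} Testing \eqref{eq:proj_bubble} against $P\delta_j$ gives
\[
\la P\delta_i,P\delta_j\ra=\int_\Sigma \chi_i|y_{\xi_i}|^{2\gam_i}e^{-\varphi_i}e^{\delta_i}\,P\delta_j\,\d v_g .
\]
We insert the expansions
\[
P\delta_i=\chi_i\bigl(\delta_i-\ln(8(1+\gam_i)^2\lam_i^{2(1+\gam_i)})\bigr)+(1+\gam_i)\varrho(\xi_i)H^g(\cdot,\xi_i)+o(1),
\]
valid near $\xi_i$, and $P\delta_j=(1+\gam_j)\varrho(\xi_j)G^g(\cdot,\xi_j)+o(1)$ away from $\xi_j$.

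For $i\neq j$, the cross term tends to $(1+\gam_i)(1+\gam_j)\varrho(\xi_i)\varrho(\xi_j)G^g(\xi_i,\xi_j)$. For $i=j$, rescaling $y=\lam_i^{-1}z$ gives
\[
\int \frac{8(1+\gam_i)^2|z|^{2\gam_i}}{(1+|z|^{2(1+\gam_i)})^2}\Bigl(-2\ln(1+|z|^{2(1+\gam_i)})+2(1+\gam_i)\ln\lam_i\cdot 2\Bigr)\,\d z+(1+\gam_i)^2\varrho(\xi_i)^2R^g(\xi_i),
\]
where the first integral is an explicit constant, namely $-2(1+\gam_i)\varrho(\xi_i)$. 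Finally we substitute $\ln\lam_i=-\frac{1}{2(1+\gam_i)}(\ln d_i+2\ln\eps)$ from \eqref{def:lam_i}. Then $\ln d_i$ produces $-\ln(8(1+\gam_i)^2)$, the term $\ln K_i(\xi_i)$, the term $(1+\gam_i)\varrho R^g$, and the cross $G^g$ terms.

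\textbf{Collecting terms.} The $\ln\eps$ contributions sum to $-2\rho_*\ln\eps$. The $\ln 8$ and $\ln(1+\gam_i)^2$ contributions produce $3\ln2\,\rho_*$ and $2(1+\gam_i)\varrho\ln(1+\gam_i)$. The constant $-2(1+\gam_i)\varrho$ combines with $-\rho_*$ from the nonlinear term. The remaining terms assemble exactly into $-\frac12\ff(\xi^0)$: the quadratic $R^g$ and $G^g$ terms appear with coefficient $\frac12$ from the Dirichlet form and coefficient $-1$ from $\ln d_i$, and the $\ln K_i$ term appears with coefficient $-(1+\gam_i)\varrho$. This bookkeeping (including the sign conventions for the constant terms) is where we must be careful. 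All error terms are uniform on compact subsets of $M^0_{\sigma,\xi^*}$, since $H^g$, $y_\xi$ and $\varphi_\xi$ depend smoothly on $\xi$.

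\textbf{Derivative.} We have
\[
\partial_{(\xi_i)_j}E_\eps(W)=\la W,\partial P\delta_i\ra-\eps^2\int_\Sigma Ke^W\partial P\delta_i\,\d v_g=-\int_\Sigma \cR\,\partial_{(\xi_i)_j}P\delta_i\,\d v_g .
\]
The derivative also acts on the $\lam$'s through $d_k(\xi)$, so $\partial_\xi$ touches every $P\delta_k$. The main obstacle is that $\partial_{\xi_i}P\delta_i$ is of size $\lam_i$, so the crude bound $\|\cR\|_s$ from Lemma \ref{lem:est_error} does not suffice.

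We plan instead to expand directly. In the rescaled variable, $\partial_{(\xi_i)_j}P\delta_i=\lam_iZ^j_i+O(1)$. The leading part of $\eps^2Ke^W-\chi_ie^{-\varphi_i}|y|^{2\gam_i}e^{\delta_i}$ near $\xi_i$ equals the bubble density times $\bigl(\nabla\log(K_i e^{\varphi_i}\cdot e^{(1+\gam_i)\varrho H^g+\dots})(\xi_i)\cdot y\bigr)+O(|y|^2)$. Pairing against $\lam_iZ^j_i$ leaves $\lam_i\cdot\lam_i^{-1}$ times a gradient, giving $-\frac12\partial_{(\xi_i)_j}\ff+o(1)$. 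The odd parts vanish by symmetry, which is valid only for regular points with $\gam_i=0$, exactly the points that move. The $\lam$-derivatives are handled via $Z^0$, which has zero integral against the bubble, so they contribute $o(1)$.

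Alternatively, uniform $C^1$ convergence follows by differentiating the $C^0$ expansion, provided the error terms are shown to be $C^1$-small. We would establish this by repeating the computation for $\partial_\xi$ of each term.
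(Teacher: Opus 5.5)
Your route is the paper's. The Dirichlet part comes from the pairings $\la P\delta_i,P\delta_j\ra$ (Lemma \ref{lem:innner_proj}) plus the substitution $\ln\lam_i=-\frac{1}{2(1+\gam_i)}(\ln d_i+2\ln\eps)$, and the mass from Lemma \ref{lem:diif_e^W_sum_e^u}. For the derivative, both you and the paper expand $\eps^2\int_\Sigma Ke^W\chi_i\lam_iZ^j_i\,\d v_g$ directly rather than using $\|\cR\|_s$. There the zeroth-order part cancels by oddness, and the first-order Taylor term of $\ln(d_i e^{\hat\varphi_i})$ gives $\frac12\partial_{(\xi_i)_j}\ff$.

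Two small corrections:
\begin{itemize}
\item In your expansion of $P\delta_i$ the constant is $\ln(8(1+\gam_i)^2\lam_i^{-2(1+\gam_i)})$, not $\lam_i^{+2(1+\gam_i)}$. Your rescaled integral already uses the right sign.
\item The $\lam$-derivative contributions are $o(1)$ for a simpler reason than the one you give: $\lam_l\partial_{\lam_l}P\delta_l=PZ^0_l=O(1)$ in $C(\Sigma)$, while $\eps^2Ke^W-\sum_l\chi_l|y_{\xi_l}|^{2\gam_l}e^{-\varphi_l}e^{\delta_l}=o(1)$ in $L^1$. No zero-mean property of $Z^0$ is needed.
\end{itemize}

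The step that does not close is the constant you defer to ``careful bookkeeping''. Carried out, your accounting gives two separate contributions of $-\rho_*$:
\begin{itemize}
\item $\frac12\sum_i\bigl(-2(1+\gam_i)\varrho(\xi_i)\bigr)=-\rho_*$ from the Dirichlet form; this is the $(-1+3\ln 2)\rho_*$ in \eqref{eq:Dirichlet};
\item $-\rho_*$ from $-\eps^2\int_\Sigma Ke^W\,\d v_g$.
\end{itemize}
Hence
\[
E_\eps(W)=(3\ln 2-2)\rho_*-2\rho_*\ln\eps+\sum_{i=1}^{p+q+N}2(1+\gam_i)\varrho(\xi_i)\ln(1+\gam_i)-\tfrac12\ff(\xi^0)+o(1),
\]
not $(3\ln 2)\rho_*$.

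No rearrangement of terms fixes this. The paper reaches $(3\ln 2)\rho_*$ only by adding $+\rho_*$ to \eqref{eq:Dirichlet}, i.e.\ by effectively using $+\eps^2\int_\Sigma Ke^W$. That sign appears in its formula for $\tilde E_\eps$ in Section \ref{sec:4}, but it contradicts the definition of $E_\eps$, whose critical points solve \eqref{eq:main_L}. So you cannot obtain the stated constant from $E_\eps$ as defined. You should record the constant you actually get and flag the mismatch, rather than assert that the pieces assemble.

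Because the discrepancy does not depend on $\xi^0$, the derivative expansion and its use in the reduction (Theorem \ref{thm:expansion_E_reduced}, Lemma \ref{lem:equi_critical}) are unaffected.
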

	
	\begin{proof}
		Let $\Dc$ be a compact subset of $M_{\sigma,\xi^*}$ and $\xi=(\xi_1,\ldots, \xi_{p+q+N})=(\xi^0,Q_1)\in \Dc$. 
		The Dirichlet integral has the following asymptotic estimate: 
		\begin{align}
			&\quad 	\frac 1 2 \int_\Sigma |\nabla W|^2_g \, \d v_g   \label{eq:Dirichlet} 
			\\&= \frac 1 2 \sum_{i=1}^{p+q+N} \la P\delta_i, P\delta_i\ra  +\frac 1 2 \sum_{\mytop{i,j=1, \ldots, p+q+N}{i\neq j}} \la  P\delta_i, P\delta_j\ra\nonumber\\
			&= (-1+3\ln  2)\rho_* -2\rho_* \ln  \eps +\sum_{i=1}^{p+q+N} 2(1+\gam_i)\varrho(\xi_i) \ln (1+\gam_i)  -\frac 1 2 \ff(\xi^0)  +o(1),  \nonumber
		\end{align}
		by Lemma \ref{lem:innner_proj}. 
		It remains to calculate the nonlinear term.  	Using  Lemma~\ref{lem:proj_bubble} and Lemma~\ref{lem:diif_e^W_sum_e^u}, we deduce that for $s\in (1,s_0)$ 
		\begin{align}
			&	\quad\eps^2 \int_{\Sigma} Ke^{W}\, \d v_g \label{eq:total_mass} \\
			&=  \mathcal{O}\Big( \Big| \int_{\Sigma} \eps^2 K  e^{W}-\sum_{i=1}^{p+q+N}  \chi_i  |y_{\xi_i}|^{2\gam_i}e^{-\varphi_i} e^{\delta_i} \, \d v_g\Big|\Big)   +  \sum_{i=1}^{p+q+N} \int_{\Sigma} \chi_i |y_{\xi_i}|^{2\gam_i} e^{-\varphi_i} e^{\delta_i} \, \d v_g \nonumber  \\
			&= \mathcal{O}\Big(\Big\|  \eps^2 K  e^{W}-\sum_{i=1}^{p+q+N}  \chi_i  |y_{\xi_i}|^{2\gam_i} e^{-\varphi_i} e^{\delta_i}  \Big\|_{s}\Big)+ \sum_{i=1}^{p+q+N} (1+\gam_i)\varrho(\xi_i) +o(1)\nonumber\\
			&=\sum_{i=1}^{p+q+N} (1+\gam(\xi_i)) \varrho(\xi_i)+o(1)\nonumber
		\end{align}
		where  we applied that $\int_{\R^2}\frac{ 8(1+\gam)^2|y|^{2\gam}}{(1+|y|^{2(1+\gam)})^2} \, \d y=  (1+\gam)8\pi$ for all $\gam>-1$ and $|\Sigma|_g=1$. 
		Combining \eqref{eq:Dirichlet} and \eqref{eq:total_mass}, we  derive that as $\eps\to 0$
		\[     E_\eps(W)= (3\ln  2)\rho_* -2\rho_* \ln  \eps +\sum_{i=1}^{p+q+N} 2(1+\gam_i)\varrho(\xi_i) \ln (1+\gam_i)  -\frac 1 2 \ff(\xi^0) +o(1). \]
		For $i=1,\ldots, p+q$ and $j=1,\ldots,\ii(\xi_i)$, we have 
		\begin{align*}
			\partial_{(\xi_i)_j} E_\eps(W) &= \int_\Sigma (-\Delta_g W-\eps^2 K e^W) \partial_{(\xi_i)_j} W \, \d v_g\\
			&=\int_\Sigma \Big( \sum_{l=1}^{p+q+N}\chi_l |y_{\xi_l}|^{2\gam_l}e^{-\varphi_l} e^{\delta_l}  -  \eps^2 K e^W\Big ) \partial_{(\xi_i)_j} W \, \d v_g, 
		\end{align*}		
		in view of $\int_\Sigma \partial_{(\xi_i)_j} W\, \d v_g  =0.$
		Observe that 
		\begin{equation}\label{eq:lem3.5-0}
			\partial_{(\xi_i)_j}P\delta_l= \begin{cases}\left. 
				\bdelta_{il}	\partial_{x_j} P\delta^{\gam_i}_{\lam_i,x}\right|_{x=\xi_i} -	\frac 1 2 	( \lam_l\partial_{\lam_l} P\delta^{\gam_l}_{\lam_l, \xi_l})   \partial_{(\xi_i)_j} \ln  d_l(\xi_l)  & \text{ for } l=1,\ldots,p+q, \\
				-\frac 1 {2(1+\gam_l)}	( \lam_l \partial_{\lam_l} P\delta^{\gam_l}_{\lam_l, \xi_l}) \partial_{(\xi_i)_j} \ln  d_l(\xi_l)  & \text{ otherwise. } 
			\end{cases}
		\end{equation}
		Lemma \ref{lem:proj_1st} and Remark \ref{rk:B.1} yield that  for $\forall i=1,\ldots,p+q,j=1,\ldots, \ii(\xi_i), $
		\begin{equation}\label{eq:lem3.5-1}
			\lam_i^{-1}
			\partial_{x_j} P\delta^{\gam_i}_{\lam_i,x}|_{x=\xi_i} = PZ^j_i+\cO(\eps^{2}|\ln \eps|)= \chi_i Z^j_i  +\cO( \eps^{2}|\ln \eps| );  
		\end{equation}
		and  for $\forall i=1,\ldots,p+q+N, $
		\begin{equation}\label{eq:lem3.5-2}
			\lam_i \partial_{\lam_i} P\delta_i = PZ^0_j+\cO(\eps^2+ \eps^{\frac 2 {1+\max\{0,\gam_i\}}}|\ln \eps|)= \chi_i( 2+Z^0_i)  +\cO(\eps^2+ \eps^{\frac 2 {1+\max\{0,\gam_i\}}}|\ln \eps|). 
		\end{equation}
		Using the estimates \eqref{eq:lem3.5-1}, \eqref{eq:lem3.5-2} and Lemma~\ref{lem:diif_e^W_sum_e^u}, we obtain,    for any $s\in (1,s_0)$ 
		\begin{align*}
			&\quad \int_{\Sigma} \Big( \sum_{l=1}^{p+q+N}\chi_l |y_{\xi_l}|^{2\gam_l}e^{-\varphi_l} e^{\delta_l}  -  \eps^2 K e^W\Big) \partial_{(\xi_i)_j} W\, \d v_g\\
			&=
			\sum_{l=1}^{p+q}  \int_{\Sigma}   \chi_l |y_{\xi_l}|^{2\gam_l}e^{-\varphi_l} e^{\delta_l} \lam_i Z^j_i  \, \d v_g -\eps^2 \int_{\Sigma} Ke^{W} \chi_i \lam_i Z^j_i \, \d v_g + 
			\mathcal{O}\Big(\Big\| \sum_{l=1}^{p+q}\chi_l e^{-\varphi_l} e^{\delta_l
			} - \eps^2   K e^W  \Big\| _{s}\Big)\\
			&=\sum_{l=1}^{p+q+N} \int_{\Sigma}  \chi_l |y_{\xi_l}|^{2\gam_l}e^{-\varphi_l} e^{\delta_l} \chi_i \lam_i Z^j_i \, \d v_g -\eps^2 \int_{\Sigma} K e^{W}\chi_i \lam_i Z^j_i \, \d v_g+o(1),
		\end{align*}
		as $\eps\rightarrow 0$. 
		For $j=1,\ldots,\ii(\xi_i)$, we have 
		\begin{align*}
			\int_{\Sigma} \chi_l |y_{\xi_l}|^{2\gam_l}e^{-\varphi_l} e^{\delta_l} \chi_i \lam_i Z^j_i \, \d v_g  
			&=
			\bdelta_{il}\lam_i^2 	\int_{B^{\xi_i}_{2r_0}}\chi^2\Big(\frac{|y|}{r_0}\Big)\frac{32\lam_i^2 y_j }{(1+\lam_i^2|y|^2)^3}  \, \d y  
			=0,
		\end{align*}
		in which the last equality applied the symmetric property of $ B^{\xi_i}_{2r_0}$. 
		It remains to  estimate  the integral $\eps^2 \int_{\Sigma} Ke^{W}\chi_i \lam_i Z^j_i \, \d v_g. $ Recall from \eqref{def:d_i} that
		$$	d_i(x)= \frac  1{ 8(1+\gam_i)^2} e^{(1+\gam_i) \varrho(\xi_i)H^g(x, \xi_i) +\sum_{\mytop{j=1,\ldots, p+q+N}{j\neq i}}(1+\gam_j)\varrho(\xi_j) G^g(x,\xi_j) +\ln  K_i(x)}.  $$       
		By Lemma~\ref{lem:proj_bubble},  it follows that  as $\eps\rightarrow 0$
		\begin{align*}
			\eps^2 \int_{\Sigma} 
			K e^{W}\chi_i \lam_iZ^j_i   \, \d v_g &=                                                                                                                                                                                                                                                                                                                                                                                                                                                                                                                                                                                                                                                                                                                                                                                                                                                                                                                                                                                                                                                                                                                                                                                                                                                                                                                                                                                                                                                                                                                                                                                                                                                                                                                                                                                                                                                                                                                                                                                                                                                                                                                                                                                                                                                                                                                                  \eps^2  \int_{\Sigma} \chi_i e^{\sum_{l=1}^{p+q+N}P\delta_l +\ln  K_i }\frac{4\lam_i^2  (y_{\xi_i})_j}{1+\lam_i^2 |y_{\xi_i}|^2} \, \d v_g \\
			&=    \int_{ \lam_iB^{\xi_i}_{r_0}} e^{\hat{\varphi}_i(y/\lam_i)} \frac{ 8 d_i\circ y_{\xi_i}^{-1}(y/\lam_i)}{d_i} \frac{ 4 \lam_i y_j}{(1+|y|^2)^3 } \, \d y + \cO(\lam_i^{-1})
			\\
			&=\frac 1 2 \partial_{(\xi_i)_j} \ff(\xi^0)+\cO(\lam_i^{-1}),
		\end{align*}
		where we applied the symmetric property of $\lam_i B^{\xi_i}_{r_0}$, \eqref{varphixi} and $\int_{\R^2} \frac{4y_j^2}{(1+|y|^2)^3} \, \d y=\pi.$ 
		
		Consequently, we prove that as $\eps\rightarrow 0$, 
		$ \partial_{(\xi_i)_j}E_{\eps}(W) =-\frac 1 2 \partial_{(\xi_i)_j} \ff(\xi^0)+o(1).  $
	\end{proof}
	Subsequently, we expand the reduced functional 
	$\tilde{E}_{\eps}: M^0_{\sigma,\xi^*}\rightarrow \R, \xi\mapsto E_{\eps}(W+\phi)$,  where $M^0_{\sigma,\xi^*}:=\{ \xi^0: (\xi^0, Q_1)\in M_{\sigma,\xi^*}\}$ and $\phi:=\phi_{\xi,\eps}$ is given by Theorem~\ref{thm2_p}. 
	\begin{theorem}
		\label{thm:expansion_E_reduced}
		Given  $m\in \N$ and $(p,q,Q_1)\in \cI_m$, we assume that~\eqref{def:d_i}-\eqref{def:lam_i} and \eqref{condi:i-ii} hold. Denote $\rho_*=4\pi n_m.$ Then, there exists $\eps_0>0$ such that   the expansion holds for any $\eps\in(0,\eps_0)$
		\begin{equation}\label{expansion_E_re}
			\tilde{E}_{\eps}(\xi^0)=(3\ln  2)\rho_* -2\rho_* \ln  \eps +\sum_{i=1}^{p+q+N} 2(1+\gam_i)\varrho(\xi_i) \ln (1+\gam_i)  -\frac 1 2 \ff(\xi^0) +o(1), 
		\end{equation}
		and  for any $i=1,\ldots, p+q$ and $j=1,\ldots,\ii(\xi_i)$
		\begin{equation}
			\label{eq:expansion_E_re-1st}  	\partial_{(\xi_i)_j} \tilde{E}_{\eps}(\xi^0)
			=-\frac 1 2 \partial_{(\xi_i)_j} \ff(\xi^0)+o(1),
		\end{equation}
		which are convergent  in  $C(M^0_{\sigma,\xi^*})$ uniformly for any $\xi^0$ in a compact subset of $M^0_{\sigma,\xi^*}$. 
	\end{theorem}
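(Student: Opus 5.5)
The plan is to compare $\tilde E_\eps(\xi^0)=E_\eps(W+\phi_{\xi,\eps})$ with $E_\eps(W)$, whose expansion and gradient are already given by Lemma~\ref{lem:key_energy_app}. Hence it suffices to show
\[
\tilde E_\eps(\xi^0)-E_\eps(W)=o(1)\quad\text{and}\quad \partial_{(\xi_i)_j}\bigl(\tilde E_\eps(\xi^0)-E_\eps(W)\bigr)=o(1),
\]
uniformly on compact subsets of $M^0_{\sigma,\xi^*}$. Throughout I use $\phi=\phi_{\xi,\eps}$ with $\|\phi\|\le R\eps^{\iota_0}|\ln\eps|$ from Theorem~\ref{thm2_p}. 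This bound is available precisely because \eqref{condi:i-ii} is assumed.

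\textbf{Zeroth order.} Taylor expansion gives
\[
E_\eps(W+\phi)-E_\eps(W)=\la W,\phi\ra-\eps^2\!\int_\Sigma Ke^W\phi\,\d v_g+\tfrac12\|\phi\|^2-\eps^2\!\int_\Sigma Ke^W(e^\phi-1-\phi)\,\d v_g .
\]
Since $\int_\Sigma\phi=0$, the first two terms combine to $-\int_\Sigma \cR\,\phi\,\d v_g$. This is bounded by $\|\cR\|_s\|\phi\|=O(\eps^{\iota/(1+\max\{0,\gam_*\})}\eps^{\iota_0}|\ln\eps|)$ using Lemma~\ref{lem:est_error}. The term $\frac12\|\phi\|^2$ is $o(1)$ directly. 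For the last term, apply H\"older and Moser--Trudinger exactly as in Lemma~\ref{lem:non_linear_term}; it is $O(\|\eps^2Ke^W\|_{sr}\|\phi\|^2)=O(\eps^{\frac{1-sr}{(1+\gam_-)sr}+2\iota_0}|\ln\eps|^2)=o(1)$ once $sr$ is close to $1$. This proves \eqref{expansion_E_re}.

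\textbf{First order.} Since $\phi$ solves \eqref{eq:toda_inf_dim}, there are coefficients $c_{kl}$ with
\[
E_\eps'(W+\phi)=\sum_{k\le p+q,\ l\le\ii(\xi_k)} c_{kl}\,PZ^l_k .
\]
By the chain rule,
\[
\partial_{(\xi_i)_j}\tilde E_\eps=E_\eps'(W+\phi)\bigl[\partial_{(\xi_i)_j}W+\partial_{(\xi_i)_j}\phi\bigr].
\]
The term involving $\partial\phi$ is handled by differentiating the orthogonality $\la\phi,PZ^l_k\ra=0$. This gives $\la\partial_{(\xi_i)_j}\phi,PZ^l_k\ra=-\la\phi,\partial_{(\xi_i)_j}PZ^l_k\ra=O(\lam\|\phi\|)$.

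The key step is therefore to estimate the $c_{kl}$. To do this, test $E_\eps'(W+\phi)$ against $PZ^{l'}_{k'}$. The Gram matrix $\la PZ^l_k,PZ^{l'}_{k'}\ra$ is approximately diagonal with entries of order $1$, by Remark~\ref{rk:B.1}. The left side equals $\la W+\phi,PZ\ra-\eps^2\!\int Ke^{W+\phi}PZ$, which is $O(\|\cR\|_s+\|S(\phi)\|_s+\|N(\phi)\|_s)$ since $PZ$ is bounded in $L^\infty$. This yields $|c_{kl}|=O(\eps^{\iota_0}|\ln\eps|)$, up to the scaling factor. Then $c_{kl}\la PZ^l_k,\partial_{(\xi_i)_j}W\ra=O(|c|\lam_i)$, because $\partial_{(\xi_i)_j}W\approx\lam_i PZ^j_i$ by \eqref{eq:lem3.5-1}.

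\textbf{Main obstacle.} The hardest part will be beating this factor $\lam_i\sim\eps^{-1}$ (at regular points). The remedy I would try first is to normalize the kernel elements as $\lam_i^{-1}\partial_{(\xi_i)_j}$ directions. Then the projected equation gives the sharper bound $c_{kl}=O(\lam^{-1})\cdot O(\|\cR\|+\ldots)$. I would also write
\[
E_\eps'(W+\phi)[\partial W]=E_\eps'(W)[\partial W]+\bigl(E_\eps''\text{-difference}\bigr).
\]
Here $E_\eps'(W)[\partial W]=\partial_{(\xi_i)_j}E_\eps(W)$ is already known from Lemma~\ref{lem:key_energy_app}. The remainder is $\la\phi,\partial W\ra-\eps^2\!\int Ke^W(e^\phi-1)\partial W$. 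Using $\partial W\approx\lam_i\chi_iZ^j_i$ and the equation for $PZ^j_i$, this reduces to $\lam_i\int(\chi_i|y|^{2\gam_i}e^{-\varphi_i}e^{\delta_i}-\eps^2Ke^W)\phi Z^j_i+O(\lam_i\|\phi\|^2)$. It is then $O(\lam_i\|\cR\|_{sr}\|\phi\|+\lam_i\|\phi\|^2)$.

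This bound is $o(1)$ only when $\iota_0+\iota/(1+\gam_*)>1$ and $2\iota_0>1$. That is exactly where the hypothesis $\gam_*<1$, through $\iota_0=\frac12(\frac12+\frac1{1+\max\{0,\gam_*\}})>\frac12$, is needed. Checking these exponent inequalities carefully, and improving the estimates with $\lam_i$-weighted norms if needed, is the step I expect to be delicate. Uniformity on compact sets follows since all constants in Lemmas~\ref{lem:invertible}--\ref{lem:non_linear_term} and Theorem~\ref{thm2_p} are uniform in $\xi\in\Dc$.
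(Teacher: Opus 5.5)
Your proof closes, but not for the reason you give. Your zeroth-order step is exactly the paper's. Your direct expansion of $E'_\eps(W+\phi)[\partial_{(\xi_i)_j}W]$ around $E'_\eps(W)$ is also exactly how the paper treats the $\partial W$ part. With $\iota=\frac{3+\max\{0,\gam_*\}}{4}$ one has $\iota/(1+\max\{0,\gam_*\})=\iota_0$, so both of your exponent conditions reduce to $2\iota_0>1$, i.e.\ $\gam_*<1$. (The $\lam_i\|\phi\|^2$ term carries an extra factor $\eps^{-\kappa}$ from $\|\eps^2Ke^W\|_{sr}$, with $\kappa\to0$ as $sr\to1$; this is harmless.)

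The ``main obstacle'' is an artifact. Once $E'_\eps(W+\phi)[\partial_{(\xi_i)_j}W]$ is expanded directly, you never need to evaluate it as $\sum c_{kl}\la PZ^l_k,\partial_{(\xi_i)_j}W\ra$. The multipliers enter only through $\sum c_{kl}\la PZ^l_k,\partial_{(\xi_i)_j}\phi\ra$, and there your crude bound already suffices.
\begin{itemize}
\item Testing against $PZ^{l'}_{k'}$ gives $|c|=O(\|\cR\|_s+\eps^{-\kappa}\|\phi\|)=O(\eps^{\iota_0-\kappa}|\ln\eps|)$. The test also produces $\int_\Sigma\cL(\phi)PZ^{l'}_{k'}\,\d v_g$, which is not one of $\cR,S,N$, but it is $O(\eps^{-\kappa}\|\phi\|)$.
\item Multiplying by $|\la PZ,\partial\phi\ra|=O(\lam_i\|\phi\|)$ gives $O(\eps^{2\iota_0-1-\kappa}|\ln\eps|^2)=o(1)$.
\end{itemize}
Write this product out explicitly, and delete the normalization remedy. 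Rescaling the basis of $K_\xi$ rescales the $c_{kl}$ inversely and leaves $\sum c_{kl}\la PZ^l_k,\partial_{(\xi_i)_j}W\ra=E'_\eps(W+\phi)[\partial_{(\xi_i)_j}W]$ unchanged. In any case $PZ^j_i$ already equals $\lam_i^{-1}\partial_{(\xi_i)_j}P\delta_i$ up to $O(\lam_i^{-2}\ln\lam_i)$ by Remark~\ref{rk:B.1}.

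Moreover, the claimed bound $c_{kl}=O(\lam^{-1}\|\cR\|_s)$ is false. Equating the two evaluations of $E'_\eps(W+\phi)[\partial_{(\xi_i)_j}W]$ gives $\frac43\varrho(\xi_i)\lam_ic^\eps_{i,j}=-\frac12\partial_{(\xi_i)_j}\ff(\xi^0)+o(1)$. So $c^\eps_{i,j}$ is of exact order $\eps$ wherever $\nabla\ff(\xi^0)\neq0$.

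Compared with the paper, the difference lies only in how the multipliers are controlled.
\begin{itemize}
\item The paper does not bound $\int_\Sigma\cR\,PZ^j_i\,\d v_g$ by $\|\cR\|_s$; it evaluates it. Using the computation of Lemma~\ref{lem:key_energy_app} and $\partial_{(\xi_i)_j}W=\lam_iPZ^j_i+O(1)$, it gets $\la W-i^*(F(W)),PZ^j_i\ra=-\frac{1}{2\lam_i}\partial_{(\xi_i)_j}\ff(\xi^0)+o(\lam_i^{-1})$.
\item This yields the sharp Claim~\ref{claim:1}, $\sum|c^\eps_{i,j}|=O(\eps)$. After that, the paper only needs $\la PZ^{j'}_{i'},\partial_{(\xi_i)_j}\phi\ra=o(\eps^{-1/2})$.
\item Your route skips this projection computation and pairs the cruder $|c|=O(\eps^{\iota_0-\kappa}|\ln\eps|)$ with the cruder $O(\lam_i\|\phi\|)$.
\end{itemize}
Both routes consume exactly the condition $\iota_0>\frac12$, which the $\partial W$ term requires anyway, so nothing is lost. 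The paper's version additionally pins down the true size of the Lagrange multipliers.
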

	\begin{proof}
		By Theorem~\ref{thm2_p}, the error term $\phi\in K_{\xi}^{\perp}$ satisfies, for $s$ sufficiently close to $1$,
		$
		\|\phi\|\le R\,\eps^{\iota_0}\,|\ln\eps|.
		$
		Thus, by Lemma~\ref{lem:innner_proj}, we obtain that
		\begin{align*}
			&\quad \tilde{E}_{\eps}(\xi^0)= E_{\eps}(W)+ \frac 1 2 \la \phi, \phi\ra + \sum_{i=1}^{p+q+N}\la P\delta_i, \phi\ra -\eps^2\int_\Sigma K  e^W( e^\phi-1-\phi)  \, \d v_g - \eps^2 \int_{\Sigma}  K e^W \phi \, \d v_g\\
			&= E_\eps(W)-\eps^2\int_\Sigma K e^W( e^\phi-1-\phi)  \, \d v_g -  \int_{\Sigma}  \Big( \eps^2 K e^W - \sum_{i=1}^{p+q+N}  \chi_i e^{-\varphi_i} |y_{\xi_i} |^{2\gam_i}  e^{\delta_i} \Big)\phi \, \d v_g+o(1). 
		\end{align*}
		For  $s,r, r'>1$ sufficiently close to $1$, Lemma 		\ref{lem:diif_e^W_sum_e^u} and  Lemma \ref{lem:non_linear_term} together with the H\"{o}lder's inequality yield  that 
		\begin{align*}
			\Big| \int_\Sigma \Big( \sum_{i=1}^{p+q+N}  \chi_i e^{-\varphi_i} |y_{\xi_i} |^{2\gam_i}  e^{\delta_i} -\eps ^2 Ke^{W}\Big) \phi \, \d v_g\Big| &\leq \Big\|   \sum_{i=1}^{p+q+N}  \chi_i e^{-\varphi_i} |y_{\xi_i} |^{2\gam_i}  e^{\delta_i} -\eps ^2 Ke^{W}  \Big\|_{s}\|\phi\|_{\frac{s}{s-1}}\\
			&\leq  \mathcal{O}\left(\eps^{\iota_0}\|\phi\|\right)=o(1),\\
			\Big |  \eps^2 \int_{\Sigma} Ke^{W}( e^{\phi}  -1 -  \phi) \, \d v_g\Big| &\leq  \mathcal{O}\Big( \eps^{\frac{1-sr'}{(1+\gam_-)sr'}} e^{c \|\phi\|^2} \|\phi\|^2 \Big) =o(1).
		\end{align*}
		Consequently,  we deduce  that
		$ \tilde{E}_{\eps}(\xi^0)=E_{\eps}(W)+o(1). $
		Immediately, Lemma~\ref{lem:key_energy_app} deduces  \eqref{expansion_E_re}, which holds in $C(M^0_{\sigma,\xi^*})$ and uniformly for  $\xi^0$ in any compact subset of $M^0_{\sigma,\xi^*}$.

		Using the result in  Theorem~\ref{thm2_p}, there exists $\{c^{\eps}_{i,j}\in\R: i=1,\ldots,p+q, j=1,\ldots,\ii(\xi_i) \}$ such that
		\begin{equation}\label{eq:solution_inf_dim}
			W+\phi -i^*(F(W+\phi ))=
			\sum_{\mytop{i=1,\ldots, p+q}{j=1,\ldots,\ii(\xi_i)}} c^{\eps}_{i,j} PZ^j_i,
		\end{equation}
		where $F(h):= \eps^2 K e^h$ for all $h\in \oH. $
		\begin{claim} \label{claim:1} For the coefficients, it holds 
			$\sum_{\mytop{i=1,\ldots, p+q}{j=1,\ldots,\ii(\xi_i)}}|c^\eps_{i,j}|=\cO( \eps ) $, as $\eps\to 0.$
		\end{claim}
		Applying \eqref{eq:solution_inf_dim}, we derive that for $i=1,\ldots,p+q, j=1,\ldots,\ii(\xi_{i})$
		\begin{align*}
			\sum_{\mytop{i'=1,\ldots,p+q}{j'=1,\ldots,\ii(\xi_{i'})}}  c^{\eps}_{i',j'}\lan PZ^{j'}_{i'}, PZ^j_i\ran &=\lan 		W+\phi -i^*(F(W+\phi )) , PZ^j_i\ran \\
			&= \lan 	W-i^*(  F(W)),PZ^j_i\ran+ \mathcal{O}( |\lan\phi, PZ^j_i\ran|  )\\
			&\quad  -\int_{\Sigma} (F(W + \phi) - F(W))PZ^j_i \, \d v_g. 
		\end{align*}
		From the proof of  Lemma~\ref{lem:key_energy_app},  for $i=1,\ldots, p+q$ we have 
		\begin{align*}
			\lan 	W-i^*(F(W)), PZ^j_i\ran&= \int_{\Sigma}\Big(\sum_{l=1}^{p+q+N}  \chi_l e^{-\varphi_l}|y_{\xi_l}|^{2\gam_l}e^{\delta_l} - \eps^2 Ke^{W}\Big)PZ^j_i \, \d v_g \\
			&= -\frac 1 {2\lam_i}\partial_{(\xi_i)_j}\ff(\xi^0)+ o(\lam_i^{-1}). 
		\end{align*}
		Combining Lemmas~\ref{lem:non_linear_term} and \ref{lem:diif_e^W_sum_e^u} with Remark~\ref{rk:B.1} and \eqref{eq:total_mass}, we infer that
		\begin{align*}
			&	\quad \int_{\Sigma} (F(W+\phi)-F(W))PZ^j_i \, \d v_g\\
			&=  \eps^2 \int_{\Sigma} Ke^{W}\phi (\chi_iZ^j_i+\cO(\eps^2|\ln \eps|))) \, \d v_g+ \mathcal{O}\left(  \eps^{\frac{ 1 -sr}{(1+\gam_-)sr}}\|\phi\|^2 \|PZ^j_i\|_{\frac s{s-1}} \right)\\
			&=  \eps^2 \int_{\Sigma} Ke^{W} \phi \chi_iZ^j_i \, \d v_g    + \cO(  \eps^{\frac{ 1 -sr}{(1+\gam_-)sr}  } \|\phi\|^2  )\\
			&=  \int_{\Sigma} \Big| \eps^2  Ke^W -\sum_{l=1}^{p+q+N} \chi_l e^{-\varphi_l}|y_{\xi_l}|^{2\gam_l} e^{\delta_l}\Big| |\phi| |PZ^i_j| \, \d v_g + \la  PZ^i_j,\phi\ra  +\cO(   \eps^{\frac{ 1 -sr}{(1+\gam_-)sr}  } \|\phi\|^2 ) \\
			&=   \cO( \eps^{ \iota_0 }\|\phi\|+  \eps^{\frac{ 1 -sr}{(1+\gam_-)sr}  } \|\phi\|^2 ) =\cO(\eps^{\iota_0+\frac 1 2 }),
		\end{align*}		
		for $s,r>1$ sufficiently close to $1$, 	in view of $\iota_0>\frac 1 2$, $\la PZ^j_i,\phi\ra=0$ and $PZ^j_i=\cO(1)$ for $i=1,\ldots, p+q, j=1,\ldots, \ii(\xi_i)$. 
		Using Lemma \ref{lem4_p}, we derive that 
		\begin{equation}
			\sum_{\mytop{i=1,\ldots,p+q}{j=1,\ldots,\ii(\xi_i)}} \frac{4\varrho(\xi_i)}{3} |c^{\eps}_{i,j} | =\cO\Big(\sum_{\mytop{i=1,\ldots,p+q}{j=1,\ldots,\ii(\xi_i)}} \frac{4\varrho(\xi_i)}{3} |c^{\eps}_{i,j} |  ( \eps^2+ \eps^{\frac 2 {1+\max\{0,\gam_*\}}}|\ln \eps| )+ \eps^{\iota_0+\frac  1 2  }+\eps \Big). 
		\end{equation}
	Since $\iota_0>\frac12$, we proved Claim \ref{claim:1}. 
		
		For $s', s>1$ sufficiently close to $1$, we have  for $ i'=1,\ldots, p+q,j'=1,\ldots,\ii(\xi_{i'})$
		\begin{align}
			\quad \lan PZ^{j'}_{i'}, \partial_{\left(\xi_{i}\right)_{j}} \phi \ran&=\partial_{(\xi_i)_j}\lan PZ^{j'}_{i'}, \phi\ran- \lan \partial_{\left(\xi_{i}\right)_{j}} PZ^{j'}_{i'}, \phi \ran \nonumber\\
			&=\int_{\Sigma} \partial_{\left(\xi_{i}\right)_{j}}( -\chi_{i'}e^{-\varphi_{i'}} e^{\delta_{i'}} Z^{j'}_{i'} )\phi \, \d v_g=\mathcal{O}\left( \|\partial_{\left(\xi_{i}\right)_{j}}( -\chi_{i'}e^{-\varphi_{i'}} e^{\delta_{i'}} Z^{j'}_{i'} )\|_{s'}\|\phi\|\right)\nonumber\\
			& =\mathcal{O}(\eps^{\frac{2-3s'}{s'}+ \iota_0 } |\ln \eps|)=o(\eps^{ -\frac 1 2 }), \label{eq:pz_pa_phi_l}
		\end{align}
		where we applied the assumption $\lan PZ^{j'}_{i'},\phi\ran= 0.$ By \eqref{eq:solution_inf_dim} and Claim \ref{claim:1}, it  follows that 
		\begin{align*}
			\partial_{(\xi_i)_j}\tilde{E}_{\eps}(\xi^0) &= \partial_{(\xi_i)_j}E_{\eps}(W) + \int_{\Sigma} (- \Delta_g \phi- \eps^2  K ( e^{W+ \phi} - e^{W}) )  \partial_{(\xi_i)_j} W  \, \d v_g\\
			&\quad +\sum_{\mytop{i'=1,\ldots, p+q }{j'=1,\ldots, \ii(\xi_i)}} c^\eps_{i',j'} \la PZ^{j'}_{i'},\partial_{(\xi_i)_j} \phi \ra  \\
			&= \partial_{(\xi_i)_j}E_{\eps}(W) + \int_{\Sigma} (- \Delta_g \phi- \eps^2  K ( e^{W+ \phi} - e^{W}) )  \partial_{(\xi_i)_j} W  \, \d v_g+o(1),
		\end{align*}
		where we applied that $\phi\in K_{\xi}^\perp.$
		The estimates \eqref{eq:lem3.5-0}-\eqref{eq:lem3.5-2} imply that
		$ \partial_{(\xi_i)_j}  W=\lam_i PZ^j_i+\mathcal{O}(1).$
		Providing that  $\lan \phi, PZ^j_i\ran =0, $ we have
		\begin{align*}
			\int_{\Sigma} (-\Delta_g\phi)\partial_{(\xi_i)_j} W\, \d v_g &=\int_{\Sigma}\lan\nabla\phi,(\lam_i \nabla PZ^j_i+\mathcal{O}(1))\ran_g  \, \d v_g =\lam_i \lan \phi, PZ^j_i\ran+ \mathcal{O}\left(\|\phi\| \right)=o(1).
		\end{align*}
		Moreover, 
		Lemma \ref{lem:non_linear_term} together with Remark \ref{rk:B.1} yields  for $s, r>1$ sufficiently close to $1$
		\begin{align*}
			&\quad	\int_{\Sigma}	(F(W+ \phi) - F(W))  \partial_{(\xi_i)_j}W\, \d v_g\\
			&=\eps^2 \int_{\Sigma} Ke^{W}\phi \partial_{(\xi_i)_j}W \, \d v_g
			+\mathcal{O}\left( \int_{\Sigma}
			|(F(W+\phi)-F(W)- F'(W)\phi )\partial_{(\xi_i)_j}W|  \, \d v_g\right)\\
			& =  \eps^2 \int_{\Sigma} Ke^{W}\phi ( \chi_i \lam_i PZ^j_i +\cO(1))\, \d v_g + \mathcal{O}\left( \lam_i  \eps^{\frac{ 1 -sr}{(1+\gam_-)sr}}\|\phi\|^2 \|PZ^j_i\|_{\frac s {s-1}} \right)\\
			&=\eps^2 \int_{\Sigma} Ke^{W}\phi \chi_i\lam_i Z^j_i \, \d v_g    +o(1),
		\end{align*}
		where $F(h)= \eps^2 K e^h$ for any $h\in \oH.$
		By Lemma~\ref{lem:diif_e^W_sum_e^u} and \eqref{def:lam_i}, for any $s, r\in (1,2)$ sufficiently close to $1$
		\begin{align*}
			&\quad 	\eps^2 \int_{\Sigma} Ke^{W}\phi\chi_i\lam_i Z^j_i \, \d v_g\\
			&= \int_{\Sigma}\chi_i e^{-\varphi_i} e^{\delta_i}\lam_i Z^j_i \phi \, \d v_g + \mathcal{O}\Big(\lam_i  \Big\| \eps^2  Ke^{W}-\sum_{l=1}^{p+q+N}  \chi_l e^{-\varphi_l}|y_{\xi_l}|^{2\gam_l} e^{\delta_l} \Big\|_{sr} \| \chi_i Z^j_i\|_{\frac{s}{s-1}}\|\phi\|\Big)\\
			&=\lam_i \lan PZ^j_i, \phi\ran +\cO( \eps^{2\iota_0-1} |\ln \eps|)=o(1),
		\end{align*}
		in view of $\iota_0>\frac  1 2. $
		Combining all the estimates above with Lemma \ref{lem:key_energy_app}, we prove that  as $\eps\rightarrow 0$
		\[ \partial_{(\xi_i)_j} \tilde{E}_{\eps}(\xi^0)=\partial_{(\xi_i)_j} E_{\eps}(W)+o(1)=-\frac 1 2  \partial_{(\xi_i)_j}\ff(\xi^0)+o(1). \]
	\end{proof}

	\section{Proof of  main results}\label{sec:5}
	Now, we are ready to prove the main results. 
	
	\begin{proof}[Proof of Theorem~\ref{thm:main0}]
		By Theorem \ref{thm2_p},  there exists $\eps_0>0$ sufficiently small  such that for any $\eps\in (0, \eps_0)$ and $\xi=(\xi^*_1,\ldots,\xi_N^*)\subset Q^N$, there exists  $\phi _{\xi,\eps}$ such that 
		\[  \phi_{\xi,\eps} =\Pi_{\xi}^\perp ( \cL^{-1}( S(\phi_{\xi,\eps})+N(\phi_{\xi,\eps})+\cR))\]
		with $\|\phi\|\leq R \eps^{\iota_0 }|\ln \eps|$, where $R>0$ is a large constant.  Since $p+q=0$ in this case, the space $K_\xi$ is trivial and hence $\Pi_\xi^\perp=\mathrm{Id}$.
		The solution $u_\eps:=\sum_{i=1}^N P\delta_i +\phi_{\xi,\eps}$ we constructed is a   solution of \eqref{eq:main_L} with respect to parameter $\eps$. 
		Denote  $\rho^\eps := \eps^2  \int_{\Sigma} K e^{u_{\eps}} \, \d v_g.$  Using the inequality  $|e^{s}-1| \leqslant e^{|s|}|s|$ for any $s \in \mathbb{R}$, for $q>1$ sufficiently close to $1$, we have 
		\begin{align*}
			\rho^\eps&=\eps^2 \int_{\Sigma} Ke^{W}\, \d v_g+ \mathcal{O}\Big(  \eps^2  \int_{\Sigma} Ke^{W}|\phi_{\xi,\eps}| \, \d v_g\Big) \stackrel{\eqref{eq:total_mass}}{=} \sum_{i=1}^{p+q+N}(1+\gam_i) \varrho(\xi_i)+\mathcal{O}(\|\eps ^2 Ke^{W}\|_{q}\|\phi_{\xi,\eps}\|)\\
			&
			=\sum_{i=1}^{N}(1+\gam(\xi_i^*)) \varrho(\xi^*_i) +o(1).
		\end{align*}
		For any $\Psi \in C(\Sigma),$
		by Lemma~\ref{lem:diif_e^W_sum_e^u}, we derive that  
		\begin{align*}
			\rho^\eps\int_{\Sigma} \frac{K \mathrm{e}^{u_{\eps}} } { \int_{\Sigma}K \mathrm{e}^{u_{\eps}} \, \d v_g} \Psi \, \d v_g &= \eps^2  \int_{\Sigma} K \mathrm{e}^{u_{\eps}}\Psi \, \d v_g
			=
			\sum_{i=1}^{N} \int_{\Sigma} \chi_{\xi_i^{*}} e^{-\varphi_{\xi^*_i}}|y_{\xi_i^*}|^{2\gam_i}e^{\delta^{\gam_i}_{\lam_i, \xi^*_i}}\Psi \, \d v_g +o(1) \\
			&= \sum_{i=1}^{N} (1+\gam(\xi_i^*)) \varrho(\xi^*_i) \Psi\left(\xi^*_i\right)+o(1),
		\end{align*} 
		as $\eps\rightarrow 0$.  Consequently, $u_\eps$ is a sequence of solutions of \eqref{eq:mf} with parameter $\rho^\eps\to \sum_{i=1}^N (1+\gam(\xi^*_i))\varrho(\xi_i^*)=\rho_*$, which blows up at $\xi_1^*,\ldots, \xi_{p+q+N}^*$ and satisfies  
		\begin{equation*}
			\rho^\eps\frac{ Ve^{v_{\eps}}} { \int_{\Sigma} Ve^{v_{\eps}}  \, \d v_g} \, \d v_g  \stackrel{*}{\rightharpoonup}
			\sum_{i=1}^{N} (1+\gam(\xi^*_i))\varrho(\xi^*_i)\bm{\delta}_{\xi^*_i},
		\end{equation*}
		where $\bm{\delta}_x$ is the Dirac mass on $\Sigma$ concentrated at $x\in \Sigma$ and 	 $v_\eps= u_\eps-h_Q(x).$ Therefore, 
		the proof is concluded.
		
		From the proof of  Lemma \ref{lem:key_energy_app} and Theorem~\ref{thm:expansion_E_reduced},  we have the following estimate for the energy level of the blow-up solutions $u_\eps$,  as $\eps\to 0$
		\begin{align*}
			J_{\rho^\eps}(u_\eps)&= \frac 1 2 \int_\Sigma |\nabla u_\eps|^2_g\, \d v_g - \rho_\eps \ln  \int_\Sigma  K e^{u_\eps}\\
			&=  \frac 1 2 \int_\Sigma |\nabla u_\eps|^2_g\, \d v_g   -  \Big( \eps^2 \int_\Sigma K e^{u_\eps} \, \d v_g\Big )\ln   \Big(\eps^2  \int_\Sigma  K e^{u_\eps} \, \d v_g\Big)+ 2 \Big( \eps^2 \int_\Sigma K e^{u_\eps} \, \d v_g\Big )\ln \eps\\
			&= (1+3\ln  2)\rho_*+\sum_{i=1}^{N} 2 (1+\gam(\xi_i^*))\varrho(\xi_i^*) \ln (1+\gam(\xi_i^*))  -\rho_* \ln  \rho_*+o(1)\\
			&\to\rho_* -\rho_* \ln  \Big(\frac {\rho_*}{8}\Big)+ 
			\sum_{i=1}^{N} 2 (1+\gam(\xi_i^*))\varrho(\xi^*_i) \ln (1+\gam(\xi_i^*)).
		\end{align*}

	\end{proof}
	
	Next, we will show that  for $\eps>0$ sufficiently small 
	$\xi^0$ being a critical point of $\tilde{E}_{\eps}$  is equivalent to  $W+\phi_{\xi,\eps}$ solving \eqref{eq:main_L2} for $\xi=(\xi^0,Q_1)$.
	
	\begin{lemma}\label{lem:equi_critical}
		Given $m\in \N$ and $(p,q, Q_1)\in \cI_m$, there exist
		$\delta>0, \eps_0>0$ sufficiently small such that 
		for any $\eps\in (0, \eps_0)$ 
		if 	$\xi:=(\xi^0,Q_1)\in M_{\sigma,\xi^*}$ and $\xi^0$ is a critical point of $\xi^0 \mapsto \tilde{E}_{\eps}(\xi^0)$ if and only if     $u := W+\phi_{\xi,\eps}$ constructed by Theorem~\ref{thm2_p}  is a solution of~\eqref{eq:main_L2}.

	\end{lemma}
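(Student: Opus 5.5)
The plan is the standard Lyapunov--Schmidt equivalence argument, built on the identity \eqref{eq:solution_inf_dim} from the proof of Theorem~\ref{thm:expansion_E_reduced}. For each $\xi=(\xi^0,Q_1)\in M_{\sigma,\xi^*}$, Theorem~\ref{thm2_p} produces $\phi=\phi_{\xi,\eps}\in K_\xi^\perp$ and coefficients $c^\eps_{i,j}$ with
\[
W+\phi-i^*(F(W+\phi))=\sum_{i=1}^{p+q}\sum_{j=1}^{\ii(\xi_i)} c^\eps_{i,j}PZ^j_i .
\]
Thus $u=W+\phi$ solves \eqref{eq:main_L2}, equivalently \eqref{eq:main_L}, if and only if all $c^\eps_{i,j}=0$. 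The lemma therefore reduces to: $\nabla_{\xi^0}\tilde E_\eps(\xi^0)=0$ if and only if $c^\eps=0$.

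The direction ``$c^\eps=0\Rightarrow$ critical point'' is immediate. Since $\xi^0\mapsto\phi_{\xi,\eps}$ is $C^1$ by Theorem~\ref{thm2_p}, the chain rule gives
\[
\partial_{(\xi_i)_j}\tilde E_\eps(\xi^0)=E_\eps'(W+\phi)\bigl[\partial_{(\xi_i)_j}W+\partial_{(\xi_i)_j}\phi\bigr]
=\sum_{i',j'} c^\eps_{i',j'}\bigl\langle PZ^{j'}_{i'},\,\partial_{(\xi_i)_j}W+\partial_{(\xi_i)_j}\phi\bigr\rangle ,
\]
which vanishes when $c^\eps=0$.

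For the converse I would compute the matrix $A_{(i,j),(i',j')}:=\langle PZ^{j'}_{i'},\partial_{(\xi_i)_j}W+\partial_{(\xi_i)_j}\phi\rangle$ and show it is invertible, so that $\nabla\tilde E_\eps=0$ forces $c^\eps=0$.

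\begin{itemize}
\item \emph{Main term.} By \eqref{eq:lem3.5-0}--\eqref{eq:lem3.5-2} we have $\partial_{(\xi_i)_j}W=\lam_i PZ^j_i+\cO(1)$ in $\oH$. Combining this with the orthogonality asymptotics of Lemma~\ref{lem4_p}, namely $\langle PZ^{j'}_{i'},PZ^j_i\rangle=\bdelta_{ii'}\bdelta_{jj'}\frac{4\varrho(\xi_i)}{3}+o(1)$, and with $\|PZ^{j'}_{i'}\|=\cO(1)$, gives $\langle PZ^{j'}_{i'},\partial_{(\xi_i)_j}W\rangle=\lam_i\bigl(\bdelta_{ii'}\bdelta_{jj'}\tfrac{4\varrho(\xi_i)}{3}+o(1)\bigr)$.
\item \emph{Remainder.} For the $\partial\phi$ term I would use \eqref{eq:pz_pa_phi_l}, i.e.\ differentiate $\langle PZ^{j'}_{i'},\phi\rangle=0$ in $\xi^0$. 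This gives $\langle PZ^{j'}_{i'},\partial_{(\xi_i)_j}\phi\rangle=-\langle\partial_{(\xi_i)_j}PZ^{j'}_{i'},\phi\rangle=o(\eps^{-1/2})$.
\end{itemize}

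Since $\lam_i\asymp\eps^{-1}$ for the regular points $i\le p+q$ (there $\gam_i=0$), we get $A=\mathrm{diag}(\lam_i)\bigl(D+o(1)\bigr)$ with $D$ diagonal and positive definite. Hence $A$ is invertible for $\eps$ small, uniformly on compact subsets of $M_{\sigma,\xi^*}$, and $Ac^\eps=0$ gives $c^\eps=0$.

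The hardest step is this remainder bound: $\partial_{(\xi_i)_j}\phi$ must be genuinely of lower order than $\lam_i$ in the pairing with $PZ^{j'}_{i'}$. Its validity depends on $\iota_0>\tfrac12$, which is exactly where the hypothesis \eqref{condi:i-ii}, $\gam_*<1$, enters. There I would redo the estimate \eqref{eq:pz_pa_phi_l} carefully, choosing $s'$ close enough to $1$ that $\frac{2-3s'}{s'}+\iota_0>-\frac12$ holds together with the $|\ln\eps|$ loss. The remaining derivative pairing $\langle PZ,\partial W\rangle$ is routine from Appendix~\ref{app:b}.
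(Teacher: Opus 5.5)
Your proposal is correct and follows essentially the same route as the paper. Both directions rest on \eqref{eq:solution_inf_dim}: the easy direction is the chain rule, and for the other the paper likewise uses $\partial_{(\xi_i)_j}W=\lam_i PZ^j_i+\cO(1)$, Lemma~\ref{lem4_p} and \eqref{eq:pz_pa_phi_l} (which needs $\iota_0>\frac12$, i.e.\ $\gam_*<1$) to get $\frac{4\varrho(\xi_i)\lam_i}{3}c^\eps_{i,j}=o\bigl(\lam_i\sum|c^\eps_{i',j'}|\bigr)$, and summing these is just the diagonal-dominance form of your invertibility of $A$.
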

	\begin{proof}
		Suppose that $\xi^0:=(\xi_1,\ldots, \xi_{p+q})$ is a critical point of $\tilde{E}_{\eps}$. Then, for $i=1,\ldots,p+q, j=1,\ldots, \ii(\xi_i)$, Theorem~\ref{thm2_p} and   Lemma~\ref{lem4_p}  together with   \eqref{eq:lem3.5-0}-\eqref{eq:lem3.5-2}, \eqref{eq:solution_inf_dim} and \eqref{eq:pz_pa_phi_l}  imply that 
		\begin{align*} 
			0&= \partial_{(\xi_i)_j} \tilde{E}_{\eps}(\xi^0)= \lan
			u -i^*(F(u)) , \partial_{(\xi_{i})_j} (W+\phi_{\xi,\eps}) \ran=
			\sum_{\mytop{i'=1,\ldots, p+q}{j'=1,\ldots,\ii(\xi_{i'})}}  c^{\eps}_{i', j'} \lan PZ^{j'}_{i'}, \partial_{(\xi_{i})_j} (W+\phi_{\xi,\eps})\ran  \\
			&=c^{\eps}_{i, j} \lam_i \lan PZ^{j}_{i},PZ^j_i\ran +o\Big( \lam_i  \sum_{\mytop{i'=1,\ldots, p+q}{j'=1,\ldots,\ii(\xi_{i'})}}   |c^{\eps}_{i',j'}| \Big)
			=\frac{ 4\varrho(\xi_i)\lam_i }{3}c^{\eps}_{i,j}+o\Big( \lam_i  \sum_{\mytop{i'=1,\ldots, p+q}{j'=1,\ldots,\ii(\xi_{i'})}}   |c^{\eps}_{i',j'}| \Big),
		\end{align*}
		where $F(u)$ is defined by \eqref{eq:def_F}. 
		To sum up the estimates above for $i=1,\ldots,p+q$ and $j=1,\ldots,\ii(\xi_i)$, we deduce that 
		$\eps\rightarrow 0$
		\[ \sum_{\mytop{i=1,\ldots, p+q}{j=1,\ldots,\ii(\xi_{i})}}  \frac{ 4\varrho(\xi_i)\lam_i }{3} |c^{\eps}_{i,j}| \left(1+o(1) \right)=0. \]
		Then, there exists 
		$\eps_0>0$ sufficiently small  such that for $\eps\in (0,\eps_0)$,  we have 
		$c^{\eps}_{i,j}=0$ 
		for  all $i=1,\ldots,p+q, j=1,\ldots,\ii(\xi_i).$
		By~\eqref{eq:solution_inf_dim}, $u$ solves \eqref{eq:main_L2}. 
		
		Conversely, assume that
		$u:= W+\phi_{\xi,\eps} $, constructed in Theorem~\ref{thm2_p}, is in fact a  solution of~\eqref{eq:main_L2}. Then, by definition, 
		$\nabla E_\eps(u)=0$. Using $ \partial_{(\xi_i)_j}(W+\phi_{\xi,\eps}) $ as a test function, we deduce that 
		\begin{align*}
			0 &=\la \nabla E_\eps(u),  \partial_{(\xi_i)_j}(W+\phi_{\xi,\eps}) \ra= \int_\Sigma \Big(-\Delta_g u - F(u) +\overline{F(u)}\Big)  \partial_{(\xi_i)_j}(W+\phi_{\xi,\eps}) \, \d v_g \\
			&=     \la
			u -i^*(F(u)) , \partial_{(\xi_{i})_j} (W+\phi_{\xi,\eps})   \ra =\partial_{(\xi_{i})_j} \tilde{E}_\eps(\xi^0). 
		\end{align*}
	\end{proof}
	
	\begin{proof}[Proof of Theorem~\ref{thm:main}]
		Suppose that  $\xi^{0,*}$ is a $C^1$-stable critical point of $\ff$.  Denote that $\xi^*= (\xi^{0,*}, Q_1)=(\xi_1^*,\ldots, \xi^*_{p+q+N})$.  Then, there exists a sequence of $\xi^\eps=(\xi^{\eps}_1,\ldots,\xi^\eps_{p+q}, Q_1)\in M_{\sigma,\xi^*}$ with $d_g(\xi_i, \xi^*_i)<\eps$ for $i=1,\ldots, p+q$ such that  $ (\xi^{\eps}_1,\ldots,\xi^\eps_{p+q}) $ is a critical point of $\tilde{E}_\eps$.  
		
		Using Theorem \ref{thm2_p}, we have 
		$u_\eps:= \sum_{i=1}^{p+q+N} P\delta^{\gam_i}_{\lam_i, \xi^\eps_i}+\phi_{\xi^\eps,\eps}$, where $\lam_i$ is given by \eqref{def:lam_i} and $\|\phi_{\xi^\eps,\eps}\|\to 0$ as $\eps\to 0$.  Then, Lemma~\ref{lem:equi_critical} yields that 
		$u_{\eps}$ is a solution of  \eqref{eq:main_L2} with parameter $\eps\to 0$.  
		Denote  $\rho^\eps := \eps^2  \int_{\Sigma} K e^{u_{\eps}} \, \d v_g.$
		By the same approach in the proof of Theorem \ref{thm:main0}, we obtain that 
		\begin{align*}
			\rho^\eps&
			=\sum_{i=1}^{p+q+N}(1+\gam_i) \varrho(\xi_i) +o(1)\to 4\pi n_m=\rho_*; \\
			\rho^\eps\frac{ Ve^{v_{\eps}}} { \int_{\Sigma} Ve^{v_{\eps}}  \, \d v_g} \, \d v_g  &\stackrel{*}{\rightharpoonup}
			\sum_{i=1}^{p+q+N} (1+\gam(\xi^*_i))\varrho(\xi^*_i)\delta_{\xi^*_i},
		\end{align*}
		where $v_\eps= u_\eps-h_Q(x).$
		Consequently, $u_\eps$ is a sequence of solutions of \eqref{eq:mf} with parameter $\rho^\eps\to \sum_{i=1}^{p+q+N} (1+\gam_i(\xi^*_i))\varrho(\xi^*_i)=4\pi n_m$, which blows up at $\xi_1^*,\ldots, \xi_{p+q+N}^*$. 
		
		From the proof of  Lemma \ref{lem:key_energy_app} and Theorem~\ref{thm:expansion_E_reduced},  we have the following estimate for the energy level of the blow-up solutions $u_\eps$,  as $\eps\to 0$
		\begin{align*}
			J_{\rho^\eps}(u_\eps)
			&= (1+3\ln  2)\rho_*+\sum_{i=1}^{p+q+N} 2 (1+\gam_i)\varrho(\xi_i) \ln (1+\gam_i)-\frac 1 2 \ff(\xi^\eps_1,\ldots,\xi^\eps_{p+q})\\
			&\quad  -\rho_* \ln  \rho_*+o(1)\\
			&\to\rho_* -\rho_* \ln  \Big(\frac {\rho_*}{8}\Big)+ 
			\sum_{i=1}^{p+q+N} 2 (1+\gam_i)\varrho(\xi_i) \ln (1+\gam_i)-\frac 1 2 \ff(\xi^{0,*}).
		\end{align*}

	\end{proof}

	\appendix
	\section{The linearized equations}\label{app:a}
	\begin{lemma}\label{lem:profile_lin}
		Let { $\gam \in (-1,+\infty)\backslash \N_0$,} and  $\phi$ be a distributional solution of 
		$$
		\left\{\begin{array}{ll}
			-\Delta \phi(y)=|y|^{2 \gam} \frac{ 8 (1+\gam)^2 }{( 1+ |y|^{2(1+\gam)})^2}\phi(y), \quad &\text { in } \cR, \\
			\partial_{y_2} \phi(y)=0, &\text{ on } \partial\cR, 
		\end{array}\right.
		$$
		with the restriction $\int_{\cR} |\nabla \phi|^2<\infty,$	where  $\cR=\R^2 \text{ or }\R^2_+ $. 
		Then, there exists $c_0\in \R$ such that $ \phi= c_0\frac{1-|y|^{2(1+\gam)}}{1+|y|^{2(1+\gam)}} .$
	\end{lemma}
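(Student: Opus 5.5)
The plan is to reduce to the full-plane case by even reflection, and then classify solutions by a Fourier decomposition in the angular variable. If $\cR=\R^2_+$, extend $\phi$ evenly across $\{y_2=0\}$. The Neumann condition $\partial_{y_2}\phi=0$ and the radial symmetry of the potential
\[
W(y):=\frac{8(1+\gam)^2|y|^{2\gam}}{(1+|y|^{2(1+\gam)})^2}
\]
ensure that the extension is a distributional solution on $\R^2$ with finite Dirichlet energy. If we then show that every finite-energy solution on $\R^2$ is a multiple of $\frac{1-|y|^{2(1+\gam)}}{1+|y|^{2(1+\gam)}}$, restricting back gives the claim. Near the origin $W\sim|y|^{2\gam}\in L^p_{loc}$ for some $p>1$ because $\gam>-1$. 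Hence standard elliptic regularity gives $\phi\in W^{2,p}_{loc}\subset C^{0,\alpha}_{loc}$, and $\phi$ is smooth away from $0$.

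Next I would write $\phi(r,\theta)=\sum_{k\ge 0}\big(a_k(r)\cos k\theta+b_k(r)\sin k\theta\big)$. Each coefficient solves
\[
a''+\tfrac1r a'-\tfrac{k^2}{r^2}a+W(r)a=0,
\]
and the finite energy gives $\int_0^\infty (r|a'|^2+\frac{k^2}{r}|a|^2)\,dr<\infty$. The natural tool is the change of variables $s=r^{1+\gam}$, i.e. $t=(1+\gam)\ln r$. Writing $\tilde a(s)=a(r)$, the ODE becomes
\[
\tilde a''+\tfrac1s\tilde a'-\tfrac{(k/(1+\gam))^2}{s^2}\tilde a+\frac{8}{(1+s^2)^2}\tilde a=0 .
\]
This is exactly the radial reduction of the regular linearized Liouville operator $-\Delta-\frac{8}{(1+|z|^2)^2}$ in the mode with non-integer frequency $\nu=k/(1+\gam)$. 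With $\tau=\ln s$ the equation is the one-dimensional Schrödinger equation
\[
-\tilde a_{\tau\tau}-2\,\mathrm{sech}^2(\tau)\,\tilde a=-\nu^2\tilde a .
\]
The Pöschl–Teller potential $-2\,\mathrm{sech}^2$ has a single bound state, $\mathrm{sech}\,\tau$, at energy $-1$, and a zero-energy resonance $\tanh\tau$. Finite energy forces, for $\nu>0$, decay at both ends, i.e. an $L^2$ eigenfunction at energy $-\nu^2$; for $\nu=0$ it forces boundedness. Therefore nontrivial solutions exist only for $\nu=1$ (solution $\mathrm{sech}\,\tau=\frac{2s}{1+s^2}$) or $\nu=0$ (solution $\tanh\tau=\frac{s^2-1}{s^2+1}$, since the other solution grows like $\tau$, i.e. like $\ln r$, which has infinite energy in 2D only borderline; I would rule it out via the energy in the $k=0$ mode near $0$ and $\infty$ combined with continuity at $0$).

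Now $\nu=1$ means $k=1+\gam$. This would require $\gam\in\N_0$ (as $k\ge1$ is an integer, $\gam=k-1\in\N_0$), which is excluded by hypothesis. So every mode $k\ge1$ vanishes, and the $k=0$ mode gives $a_0=c\,\frac{1-r^{2(1+\gam)}}{1+r^{2(1+\gam)}}$.

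The main obstacle is the rigorous justification of the mode analysis. One has to check that finite Dirichlet energy translates, after the singular change of variables, into the needed decay or boundedness at $\tau\to\pm\infty$, and in particular exclude the logarithmically growing second solution for $k=0$ and the growing branch $e^{\nu|\tau|}$ for $k\ge1$. I would handle this by noting that the two independent solutions behave like $r^{\pm k}$ (or $1,\ln r$) at both $0$ and $\infty$. Continuity of $\phi$ at $0$ kills $r^{-k}$ and $\ln r$ there, and finiteness of $\int |\nabla\phi|^2$ kills $r^{k}$ and $\ln r$ at infinity. The Sturm/Pöschl–Teller spectral fact then leaves only the listed solutions.
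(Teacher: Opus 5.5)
Your proposal is correct and follows the same route as the paper: even reflection across $\{y_2=0\}$ to reduce the half-plane case to $\R^2$, then local elliptic regularity, then the angular Fourier-mode analysis, which the paper delegates to Zhang (2009) and Bartolucci--Yang--Zhang (2024). Your explicit reduction via $s=r^{1+\gam}$ to the P\"oschl--Teller equation at frequency $\nu=k/(1+\gam)$ supplies the details the paper omits, and it uses only the finite energy $\int_{\R}(|a_t|^2+k^2|a|^2)\,dt<\infty$ with $t=\ln r$. That bound already excludes $r^{\pm k}$ and $\ln r$ at both ends, so you need neither continuity at $0$ nor a growth bound when $\gam\in(-1,0)$, and the $\ln r$ exclusion is not borderline.
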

	\begin{proof}
		For the case $\cR = \R^2$, we refer to  \cite[Lemma 2.1]{BartolucciYangZhang2024} and \cite[Lemma 2.1]{Zhang2009}. 
		For the case $\cR = \R_{+}^2$, we extend $\phi$ to a function defined on $\R^2$ by even reflection across the $y_1$-axis, that is,
	$$
		\phi(y_1, y_2) = \begin{cases}
			\phi(y_1, y_2) & \text{for } y_2 \geq 0, \\
			\phi(y_1, -y_2) & \text{for } y_2 < 0. 
		\end{cases}
	$$
	Since $\partial_{y_2}\phi \equiv 0$ on $\partial \R^2_+$, it follows that
	\[
	|y|^{2\gamma} e^{\tilde{\delta}^{\gamma}} \phi(y) \in L^r(\R^2)
	\quad \text{for some } r>1 .
	\]
	By elliptic regularity, the even extension of $\phi$ across $\partial \R^2_+$
	belongs to $C^{1,s}_{\mathrm{loc}}(\R^2)$ for some $s\in(0,1)$.
	Moreover, standard elliptic estimates yield
	\[
	\phi \in C^2_{\mathrm{loc}}(\R^2\setminus\{0\}),
	\]
	and $\phi$ is bounded on every compact subset of $\R^2$.
	Consequently, Lemma~\ref{lem:profile_lin} follows exactly as in the case
	$\cR=\R^2$.
	
	We emphasize that when $\gamma\in(-1,0)$, a global sublinear growth condition of the form
	\[
	|\phi(y)|\le C(1+|y|)^t, \text{ for some } t\in (0,1)
	\]
	does not hold in general.
	Nevertheless, the weaker assumption
	\[
	\int_{\R^2} |\phi|^2\,\d y<+\infty
	\]
	is sufficient to derive the required conclusion via Fourier analysis, as carried out in
	\cite{BartolucciYangZhang2024}.
	We omit the details here.

	\end{proof}

	
	\section{Asymptotic behavior of the projected bubbles}\label{app:b}
	In this section, we first obtain estimates for the projected bubbles and their derivatives, and then use these estimates to examine the basic asymptotic behavior of the approximate solution.

	We assume that $(p,q, Q_1)\in \N\times\N \times\cP(Q)$ with $Q_1=\{  \xi^*_{p+q+1}, \ldots,\xi^*_{p+q+N} \}$ and $P\delta_i=P\delta^{\gam_i}_{\lam_i,\xi_i}$ with $\lam_i^{1+\gam_i}\asymp \frac 1 \eps$  for $i=1,\ldots, p+q+N$ as $\lam_i  \to +\infty$.  
	Let $\xi^*=(\xi^{0,*}, Q_1)$ and $\xi^{0,*}$ is a critical point of $\ff$ defined by \eqref{def:reduce_fun_0}. Assume that $$\xi:=(\xi_1, \ldots, \xi_{p+q+N})=(\xi_1,\ldots, \xi_{p+q}, \xi^*_{p+q+1}, \ldots,\xi^*_{p+q+N})\in M_{\sigma, \xi^*}$$ 
	for some $\sigma>0$ sufficiently small.  
	
	
	We also denote  for $i=1,\ldots, p+q+N$
	\begin{equation}
		\label{def:varep_0i} \epsilon_{0,i}=\begin{cases}
			\frac {\varrho(\xi_i)\mathrm{c}(\gam_i) }4 \lam_i^{-2}   & \gam_i> 0\\
			\frac {\rho(\xi_i)} 2 	\lam_i^{-2} \ln \lam_i & \gam_i=0\\
			0& \gam_i \in (-1, 0)
		\end{cases},  \quad \text{ and } \quad 
		\epsilon_{1,i}= \begin{cases}
			\lam_i^{-2(1+\gam_i)}\ln \lam_i &   \gam_i>0  \\
			\lam_i^{-2}  &\gam_i=0\\
			\lam_i^{-2(1+\gam_i)}	 &\gam_i\in (-1, 0)  
		\end{cases}.
	\end{equation}
	where  $\cc(\gam):= \int_{0}^{\infty}  \ln ( 1+ s^{-(1+\gam)}) \, \d s=\frac{\pi }{1+\gam}\csc (\frac{\pi}{1+\gam})<+\infty$ for any $\gam>0$.
	
	\begin{lemma}\label{lem:proj_bubble}
		For any $i=1,\ldots, p+q+N$, as $\lam_i  \to\infty$ we have 
		\begin{align*}
			P\delta^{\gam_i}_{\lam_i,\xi_i}&=\chi_i \cdot(\delta^{\gam_i}_{\lam_i,\xi_i}-\ln  (8 (1+\gam_i)^2 \lam_i^{- 2(1+\gam_i)}))+(1+\gam_i) \varrho(\xi_i)H^g(\cdot, \xi_i)+\epsilon_{0,i}
			+\cO(\epsilon_{1,i})	
		\end{align*}
		in $C(\Sigma)$, where $\epsilon_{0,i}$ and $\epsilon_{1,i}$ are given by \eqref{def:varep_0i}. 
	\end{lemma}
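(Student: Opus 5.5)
Write $\eta_i:=P\delta^{\gam_i}_{\lam_i,\xi_i}-\chi_i\bigl(\delta^{\gam_i}_{\lam_i,\xi_i}-\ln(8(1+\gam_i)^2\lam_i^{-2(1+\gam_i)})\bigr)-(1+\gam_i)\varrho(\xi_i)H^g(\cdot,\xi_i)$. The plan is to find the equation $\eta_i$ satisfies and to show, by elliptic estimates, that $\eta_i$ is the constant $\epsilon_{0,i}$ up to $\cO(\epsilon_{1,i})$ in $C(\Sigma)$. First we record the behavior away from the center. Since $\delta_i-\ln(8(1+\gam_i)^2\lam_i^{-2(1+\gam_i)})=-2\ln\bigl(|y_{\xi_i}|^{2(1+\gam_i)}+\lam_i^{-2(1+\gam_i)}\bigr)$, this term equals $-4(1+\gam_i)\ln|y_{\xi_i}|+\cO(\lam_i^{-2(1+\gam_i)}|y_{\xi_i}|^{-2(1+\gam_i)})$ on $\{|y_{\xi_i}|\ge r_0\}$. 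Because $-\frac{4}{\varrho(\xi_i)}\chi_i\ln|y_{\xi_i}|=\Gamma^g_{\xi_i}$, the term $\chi_i(\cdots)+(1+\gam_i)\varrho(\xi_i)H^g_{\xi_i}$ is $(1+\gam_i)\varrho(\xi_i)G^g_{\xi_i}$ up to an error of order $\lam_i^{-2(1+\gam_i)}$ there.

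Next we compute $-\Delta_g\eta_i$ in $\Sigma$ and $\partial_{\nu_g}\eta_i$ on $\partial\Sigma$, using the conformal law $\Delta_g=e^{-\varphi_i}\Delta$ in the chart and the Liouville equation $-\Delta\tilde\delta^{\gam_i}_{\lam_i}=|y|^{2\gam_i}e^{\tilde\delta^{\gam_i}_{\lam_i}}$. The main singular parts cancel against the right side of \eqref{eq:proj_bubble}, and against the equation \eqref{eq:H_xi} for $H^g_{\xi_i}$ together with the total mass $\int|y|^{2\gam}e^{\tilde\delta^\gam}=\varrho(1+\gam)$ (Neumann boundary included, since the normal derivative vanishes on $y_2=0$). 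What remains has three pieces. The first is the cutoff-gradient terms, supported in $r_0\le|y|\le 2r_0$, of size $\cO(\lam_i^{-2(1+\gam_i)})$. The second is the mismatch between the average $\overline{\chi_ie^{-\varphi_i}|y|^{2\gam_i}e^{\delta_i}}$ and $(1+\gam_i)\varrho(\xi_i)$, which is the tail mass $\int_{|y|>r_0}$, of order $\lam_i^{-2(1+\gam_i)}$. The third is the tail $\lam_i^{-2(1+\gam_i)}|y|^{-2(1+\gam_i)}$ correction on the annulus. Hence $-\Delta_g\eta_i=f_i-\overline{f_i}$ with $\|f_i\|_{L^\infty}=\cO(\lam_i^{-2(1+\gam_i)})$ and $\partial_{\nu_g}\eta_i=0$. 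By elliptic $L^p$ and $W^{2,p}$ theory, $\eta_i-\overline{\eta_i}=\cO(\lam_i^{-2(1+\gam_i)})$ in $C(\Sigma)$.

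The main obstacle is computing the constant $\overline{\eta_i}$. Since $\int_\Sigma P\delta_i=0$ and $\int_\Sigma H^g_{\xi_i}=\frac4{\varrho(\xi_i)}\int\chi_i\ln|y_{\xi_i}|$ by \eqref{eq:H_xi}, we get
\[
\overline{\eta_i}=2\int_\Sigma\chi_i\Bigl[\ln\bigl(|y_{\xi_i}|^{2(1+\gam_i)}+\lam_i^{-2(1+\gam_i)}\bigr)-2(1+\gam_i)\ln|y_{\xi_i}|\Bigr]\,\d v_g=2\int_\Sigma\chi_i\ln\bigl(1+\lam_i^{-2(1+\gam_i)}|y_{\xi_i}|^{-2(1+\gam_i)}\bigr)\,\d v_g.
\]
We pass to the chart (weight $e^{\hat\varphi_i}=1+\cO(|y|)$) and rescale $y=z/\lam_i$, which gives $2\lam_i^{-2}\int_{\lam_iB}\ln(1+|z|^{-2(1+\gam_i)})\,\d z$ plus lower-order terms. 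The three cases of \eqref{def:varep_0i} come from the integrand decaying like $|z|^{-2(1+\gam_i)}$.
\begin{itemize}
\item If $\gam_i>0$, the integral converges; written in polar coordinates it is a multiple of $\lam_i^{-2}$ times a constant of the form $\mathrm{c}(\gam_i)$ (angular factor $2\pi$ or $\pi$ giving $\varrho/4$). The remainder, including the $\cO(|y|)$ metric correction, is $\cO(\lam_i^{-2(1+\gam_i)}\ln\lambda_i)$ or smaller.
\item If $\gam_i=0$, there is a logarithmic divergence, giving $\frac{\varrho}{2}\lam_i^{-2}\ln\lam_i+\cO(\lam_i^{-2})$.
\item If $\gam_i<0$, the integral is dominated by the region $|y|\sim 1$ and is $\cO(\lam_i^{-2(1+\gam_i)})$, so it is absorbed into the error and $\epsilon_{0,i}=0$.
\end{itemize}
We would carefully match the constant $\mathrm{c}(\gam_i)$ by substituting $s=r^2$ in the radial integral. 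Combining this with the previous step gives the stated expansion in $C(\Sigma)$.
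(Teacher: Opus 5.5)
Your route is the one the paper has in mind (its proof only says the $\gam_i\neq0$ case goes like the $\gam_i=0$ case, via Neumann elliptic regularity and Sobolev embedding), and most of it is right. That includes the computation of $-\Delta_g\eta_i$ and $\partial_{\nu_g}\eta_i$, the bound $\eta_i-\overline{\eta_i}=\cO(\lam_i^{-2(1+\gam_i)})$ in $C(\Sigma)$, the formula $\overline{\eta_i}=2\int_\Sigma\chi_i\ln\bigl(1+(\lam_i|y_{\xi_i}|)^{-2(1+\gam_i)}\bigr)\,\d v_g$, and the cases $\gam_i\le0$.

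The step that fails is your claim, for $\gam_i>0$, that the metric correction coming from $\d v_g=e^{\hat\varphi_i}\d y$ is $\cO(\lam_i^{-2(1+\gam_i)}\ln\lam_i)$. Using only $e^{\hat\varphi_i}=1+\cO(|y|)$, rescaling bounds this correction by $C\lam_i^{-3}\int_{|z|\lesssim\lam_i}|z|\ln(1+|z|^{-2(1+\gam_i)})\,\d z$. That is of order $\lam_i^{-3}$ as soon as $\gam_i>\frac12$, and $\lam_i^{-3}\gg\lam_i^{-2(1+\gam_i)}\ln\lam_i$ in that range.

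At an interior point you can partly repair this. Since $\nabla\hat\varphi_i(0)=0$, you have $e^{\hat\varphi_i}=1+\cO(|y|^2)$, and the remainder is then $\cO(\epsilon_{1,i})$ for $\gam_i<1$. For $\gam_i>1$, however, the angular average $-\frac12K_g(\xi_i)|y|^2$ of the quadratic term contributes $-2\pi K_g(\xi_i)\lam_i^{-4}\int_0^\infty\rho^3\ln(1+\rho^{-2(1+\gam_i)})\,\d\rho+o(\lam_i^{-4})$.

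At a boundary point no repair is possible. By \eqref{varphixi}, $\nabla\hat\varphi_i(0)=(0,-2k_g(\xi_i))$, and the half-disk is not symmetric under $y_2\mapsto-y_2$. So $\overline{\eta_i}$ contains the term $-8k_g(\xi_i)\lam_i^{-3}\int_0^\infty\rho^2\ln(1+\rho^{-2(1+\gam_i)})\,\d\rho+o(\lam_i^{-3})$. This is nonzero whenever $k_g(\xi_i)\neq0$ and is not $\cO(\epsilon_{1,i})$ for $\gam_i>\frac12$.

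So in these ranges the statement itself, not just your proof, is too strong, and no argument will give it. What your method actually proves is the expansion with remainder $\cO(\epsilon_{1,i}+\lam_i^{-3})$ at boundary points and $\cO(\epsilon_{1,i}+\lam_i^{-4})$ at interior points. These extra terms are constants of order $o(\lam_i^{-2})$, so the later uses of the lemma, which need only much cruder remainders, are unaffected. You should nonetheless state and prove the corrected remainder rather than assert $\cO(\epsilon_{1,i})$.

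One more point for when you match the constant. An integration by parts gives $\mathrm{c}(\gam)=(1+\gam)\int_0^\infty\frac{\d s}{1+s^{1+\gam}}=\pi\csc\frac{\pi}{1+\gam}$, so the closed form quoted in the paper is off by a factor $1+\gam$. The integral definition, which is what your substitution $s=r^2$ produces, is the one that belongs in $\epsilon_{0,i}$.
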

	
	\begin{proof}
		The proof for the case $\gam_i = 0$ is given in \cite[Appendix A]{ABH2024MF}, and the case $\gam_i \neq 0$ can be treated in a similar manner. The argument relies on the regularity theory for elliptic equations with Neumann boundary conditions (see, e.g., \cite{N2014, A1959, W2004}), followed by  the Sobolev embedding. The details are omitted here.
	\end{proof}
	The following lemma shows the asymptotic expansions of the first derivatives of the projected bubbles  along $\lam_i\asymp \eps^{-\frac{1}{1+\gam_i}}\to\infty$:
	\begin{lemma} 
		\label{lem:proj_1st}
		We  have as $\lam_i \to\infty$,  for $i=1,\ldots, p+q+N$
		\begin{align*}
			\lam_i \partial_{\lam_i} P\delta^{\gam_i}_{\lam_i,\xi_i} 
			&=\chi_i \frac{4(1+\gam_i)}{1+\lam_i^{2(1+\gam_i)} |y_{\xi_i}|^{2(1+\gam_i)}}+\cO(\epsilon_{0,i}+\epsilon_{1,i}), 
		\end{align*}
		and for $i=1,\ldots, p+q$ and $j=1,\ldots, \ii(\xi_i)$
		\begin{align*}
			&\quad	\frac {\partial_{x_j} P\delta^{\gam_i}_{\lam_i,x}|_{x=\xi_i}} {\lam_i}  =\frac {\chi_i \partial_{x_j}  \delta^{\gam_i}_{\lam_i,x}|_{x=\xi_i} } {\lam_i} +   \frac {\varrho(\xi_i)} {\lam_i}\partial_{(\xi_i)_j} H^g(\cdot,\xi_i)-\frac 4 {\lam_i} (\ln  |y_{\xi_i}|)\partial_{(\xi_i)_j}\chi_{i} +\cO(\epsilon_{0,i}+	\epsilon_{1,i}) , 
		\end{align*}
		in $C(\Sigma)$, where $\epsilon_{1,i}$ is defined by \eqref{def:varep_0i}. 
		
	\end{lemma}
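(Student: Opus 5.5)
The plan is to differentiate the defining problem \eqref{eq:proj_bubble} with respect to the parameters and compare the resulting projected function with the explicit candidate, in the same way the $C(\Sigma)$ expansion of Lemma~\ref{lem:proj_bubble} is obtained.

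\emph{The $\lam_i$-derivative.} Set $\psi:=\lam_i\partial_{\lam_i}P\delta_i$. Differentiating \eqref{eq:proj_bubble} shows that $\psi$ solves the Neumann problem
\[
-\Delta_g\psi=f-\overline{f},\qquad \partial_{\nu_g}\psi=0,\qquad \int_\Sigma\psi\,\d v_g=0,
\]
where $f:=\chi_i e^{-\varphi_i}|y_{\xi_i}|^{2\gam_i}e^{\delta_i}\,\lam_i\partial_{\lam_i}\delta_i$. A direct computation gives
\[
\lam_i\partial_{\lam_i}\delta_i=\frac{4(1+\gam_i)}{1+\lam_i^{2(1+\gam_i)}|y_{\xi_i}|^{2(1+\gam_i)}}-2(1+\gam_i)+2(1+\gam_i)=:\theta_i .
\]
That is, $\theta_i$ is the function $2(1+\gam_i)+z^{\gam_i}_0(\lam_i y_{\xi_i})$ written out in closed form.

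\emph{Error equation for the $\lam_i$-derivative.} Define $\eta:=\psi-\chi_i\theta_i+c$, where the constant $c$ is chosen so that $\eta$ has zero mean. In the coordinates $y_{\xi_i}$ the Euclidean Laplacian satisfies $-\Delta\theta_i=|y|^{2\gam_i}e^{\delta_i}\theta_i$, which follows from differentiating the Liouville equation in $\lam$. Using $\Delta_g=e^{-\varphi_i}\Delta$, we then get
\[
-\Delta_g\eta=(\text{terms supported where }\nabla\chi_i\neq0)-\overline{(\cdots)}.
\]
On that annulus $\theta_i=O(\lam_i^{-2(1+\gam_i)})$ together with its derivatives. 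Moreover, $\partial_{\nu_g}\eta=0$: the interior case is trivial, and on the boundary $\theta_i$ is radial, hence has zero $y_2$-derivative. Elliptic $L^p$-estimates for the Neumann problem (as in the proof of Lemma~\ref{lem:proj_bubble}) combined with Sobolev embedding then give $\|\eta\|_{C(\Sigma)}=O(\lam_i^{-2(1+\gam_i)})$.

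\emph{The constant $c$.} It remains to identify $c$. By the mean-zero normalization, $c=\overline{\chi_i\theta_i}$. Changing variables $y=\lam_i^{-1}z$ gives
\[
c=\lam_i^{-2}\int \frac{4(1+\gam_i)}{1+|z|^{2(1+\gam_i)}}\,\d z
\]
cut at radius $\lam_i r_0$. For $\gam_i>0$ this integral converges and yields a constant multiple of $\lam_i^{-2}$, i.e. $O(\epsilon_{0,i})$. For $\gam_i=0$ it is $O(\lam_i^{-2}\ln\lam_i)$. For $\gam_i<0$ it grows like $\lam_i^{-2}(\lam_i)^{-2\gam_i}=\lam_i^{-2(1+\gam_i)}$, which is $O(\epsilon_{1,i})$. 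In every case the constant is absorbed into $\cO(\epsilon_{0,i}+\epsilon_{1,i})$, which gives the first expansion.

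\emph{The $\xi_i$-derivative ($i\le p+q$, where $\gam_i=0$).} Here $x\mapsto P\delta_{\lam_i,x}$ is differentiated, and $y_x,\varphi_x,\chi_x$ all vary smoothly. The derivative of the right-hand side of \eqref{eq:proj_bubble} produces $\partial_{x_j}$ of the bubble density, plus the derivatives of $\chi_x$ and $\varphi_x$. I would take the candidate
\[
\chi_i\partial_{x_j}\delta_{\lam_i,x}+\varrho(\xi_i)\,\partial_{x_j}H^g(\cdot,x)-4\ln|y_x|\,\partial_{x_j}\chi_x ,
\]
which is exactly the $x$-derivative of the leading part of Lemma~\ref{lem:proj_bubble} once the $\lam$-independent pieces are regrouped via $G=\Gamma+H$. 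The difference with $\partial_{x_j}P\delta_{\lam_i,x}$ solves a Neumann problem whose source consists of cut-off-annulus terms and of terms coming from $\nabla\hat\varphi_x$ multiplied by the bubble. Here \eqref{varphixi} is used: at boundary points $\nabla\hat\varphi_x(0)\neq0$ but is normal, while $\partial_{y_2}$ of the bubble vanishes on $\partial\R^2_+$, so these terms integrate to lower order. The boundary normal derivative of the difference also vanishes to the needed order. Applying the same elliptic estimate and dividing by $\lam_i$ gives the claimed error $\cO(\epsilon_{0,i}+\epsilon_{1,i})$.

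\emph{Main obstacle.} The delicate step is controlling, uniformly in $\lam_i$, the sources generated by differentiating $\varphi_x$ and $y_x$ in $x$ near a boundary point. These sources are of size $\lam_i$ times the bubble density, so one has to use the symmetry and cancellation from \eqref{varphixi}, and obtain $L^s$ bounds with $s$ close to $1$, to get the extra factor of $\lam_i^{-1}$.
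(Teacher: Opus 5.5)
Your route is the standard one the paper relies on. The paper itself gives no argument and only points to the $\gamma_i=0$ case in the literature. The route is to differentiate \eqref{eq:proj_bubble}, subtract an explicit candidate, and use $W^{2,s}$ estimates plus Sobolev embedding. Your candidates and your count for the constant $c$ are right, but two steps are wrong as written.

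\emph{The $\lambda_i$-derivative.} $\lambda_i\partial_{\lambda_i}\delta_i$ is not $\theta_i$.

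A direct computation gives $\lambda_i\partial_{\lambda_i}\delta_i=z_0^{\gamma_i}(\lambda_i y)=\theta_i-2(1+\gamma_i)$. This tends to $-2(1+\gamma_i)$, not $0$, on the cut-off annulus. Differentiating the Liouville equation then gives $-\Delta\theta_i=|y|^{2\gamma_i}e^{\delta_i}\,\lambda_i\partial_{\lambda_i}\delta_i$, not $|y|^{2\gamma_i}e^{\delta_i}\theta_i$.

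Your two misstatements cancel, so your equation for $\eta$ is in fact correct. The justification, however, must be that $\theta_i$ and $\lambda_i\partial_{\lambda_i}\delta_i$ differ by a constant and so have the same Laplacian. If you combined the correct formula for $\lambda_i\partial_{\lambda_i}\delta_i$ with $-\Delta\theta_i=|y|^{2\gamma_i}e^{\delta_i}\theta_i$, you would be left with the non-small source $2(1+\gamma_i)\chi_ie^{-\varphi_i}|y_{\xi_i}|^{2\gamma_i}e^{\delta_i}$. This constant shift is exactly why $\chi_i\theta_i$, unlike $\chi_iZ^0_i$, is $O(\lambda_i^{-2(1+\gamma_i)})$ on the annulus.

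\emph{The $\xi_i$-derivative.} Here you misidentify the source. Your remedy would also fail if your ``main obstacle'' were real. $L^s$ norms do not see odd cancellations, and a source of size $\lambda_i e^{\delta_i}$ has $L^s$ norm $\asymp\lambda_i^{3-2/s}$.

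In fact there is no such term. The identity $-\Delta_g\delta_{\lambda,x}=e^{-\varphi_x}e^{\delta_{\lambda,x}}$ holds on $U(x)$ for every $x$, and $\Delta_g$ does not depend on $x$, so you can differentiate it in $x$. Hence $-\Delta_g(\chi_x\partial_{x_j}\delta_{\lambda,x})$ reproduces $\chi_x\partial_{x_j}(e^{-\varphi_x}e^{\delta_{\lambda,x}})$ exactly, all $\partial_{x_j}\varphi_x$-terms included. No cancellation from \eqref{varphixi} is needed. The Neumann condition also holds exactly, since everything is radial in $y_x$ and $\partial\Sigma$ corresponds to $\{y_2=0\}$.

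What your sketch is actually missing is that the remaining cut-off terms are \emph{not} small. On the annulus, $\partial_{x_j}\delta_{\lambda,x}=-4\partial_{x_j}\ln|y_x|+O(\lambda^{-2})$ is of order one. An $O(1)$ source gives only $O(\lambda_i^{-1})$ after dividing by $\lambda_i$, which is not $O(\epsilon_{0,i}+\epsilon_{1,i})=O(\lambda_i^{-2}\ln\lambda_i)$.

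These terms cancel, up to $O(\lambda_i^{-2})$, against $-\Delta_g$ of $\varrho(\xi_i)\partial_{x_j}H^g(\cdot,x)-4\ln|y_x|\,\partial_{x_j}\chi_x$. To see this, differentiate \eqref{eq:H_xi} in $x$ and use that $\ln|y_x|$ is harmonic on the annulus. This cancellation is the reason those two terms appear in the candidate. As in the first part, you must also control the mean of the candidate, which is $O(\lambda_i^{-2}\ln\lambda_i)$.
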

	\begin{proof}
		The proof for the case $\gam_i = 0$ is given in \cite[Appendix A]{ABH2024MF}, and the case $\gam_i \neq 0$ can be deduced similarly.   The details are omitted here. 
	\end{proof}
	\begin{remark}\label{rk:B.1}
		Recall that $PZ^j_i$ is defined by \eqref{def:PZ}.  Observe that 
		\[ \lam_i\partial_{\lam_i} P\delta_i =PZ^0_i \quad  \text{ and }\quad 
		\lam^{-1}_i\partial_{(\xi_i)_j} P\delta_i  	=PZ^j_i+ \cO( \lam_i^{-2}\ln \lam_i ), \]
		for $i=1,\ldots, p+q+N, j=1,\ldots, \ii(\xi_i). $
		Then,  we also have the following asymptotic estimates by  similar arguments: 
		for $i=1,\ldots, p+q+N$
		$$PZ^0_i=  \frac{4(1+\gam_i)\chi_i}{1+\lam_i^{2(1+\gam_i)}|y_{\xi_i}|^{2(1+\gam_i)}} + \begin{cases}
			\cO(	\lam_i^{-2}) &   \gam_i>0  \\
			\cO(	\lam_i^{-2} \ln \lam_i ) &\gam_i=0\\
			\cO(	\lam_i^{-2(1+\gam_i)}	)  &\gam_i\in (-1, 0)  
		\end{cases}, $$ in $C(\Sigma);$ for $i=1,\ldots,p+q, j=1,\ldots,\ii(\xi_i)$, 
		$PZ^j_i= \frac{ 4 \chi_i \lam_i (y_{\xi_i})_j } 
		{ 1+ \lam_i^{2(1+\gam_i) } |y_{\xi_i}|^{2(1+\gam_i)} }+\cO(\lam_i^{-1}), $
		in $C(\Sigma)$. 
	\end{remark}
	
	Next, we will show the asymptotic orthogonal property of the projected bubbles. 
	\begin{lemma}
		\label{lem:innner_proj}
		For $i, j=1,\ldots, p+q+N$, we have 
		\begin{align} \label{eq:B3-1}
			&\quad 	\la P\idelta, P\delta^{\gam_j}_{\lam_j, \xi_j}\ra  \\
			&= 	(-2(1+\gam_i)\varrho(\xi_i)+ 4(1+\gam_i)^2\varrho(\xi_i)\ln \lam_i) \bdelta_{ij}+ (1+\gam_i)(1+\gam_j ) \varrho(\xi_i)\varrho(\xi_j) H^g(\xi_i,\xi_j)\nonumber \\
			&\quad   + \big( (1+\gam_i) \varrho(\xi_i) \epsilon_{0,j} +     (1+\gam_j) \varrho(\xi_j) \epsilon_{0,i} \big) +\cO(\epsilon_{1,i}+\epsilon_{1,j}), \nonumber
		\end{align}
		as $\lam_i \asymp \lam^{\frac 1 {1+\gam_i}}\to \infty,$ 	where  $\bdelta_{ij}$ is the Kronecker delta. 
	\end{lemma}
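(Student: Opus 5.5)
The idea is to test the equation of $P\delta^{\gam_j}_{\lam_j,\xi_j}$ against $P\idelta$, which turns the Dirichlet inner product into an integral of $P\idelta$ against the bubble density. Since $\int_\Sigma P\idelta\,\d v_g=0$, the mean-value term in \eqref{eq:proj_bubble} drops out, and integrating by parts with the Neumann condition gives
\[
\la P\idelta, P\delta^{\gam_j}_{\lam_j,\xi_j}\ra=\int_\Sigma \chi_j e^{-\varphi_j}|y_{\xi_j}|^{2\gam_j}e^{\delta_j}\,P\idelta\,\d v_g .
\]
Next I would substitute the expansion of $P\idelta$ from Lemma~\ref{lem:proj_bubble}:
\[
P\idelta=\chi_i\bigl(\delta_i-\ln(8(1+\gam_i)^2\lam_i^{-2(1+\gam_i)})\bigr)+(1+\gam_i)\varrho(\xi_i)H^g(\cdot,\xi_i)+\epsilon_{0,i}+\cO(\epsilon_{1,i}),
\]
valid uniformly in $C(\Sigma)$. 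The weight has total mass $\int_\Sigma \chi_j e^{-\varphi_j}|y_{\xi_j}|^{2\gam_j}e^{\delta_j}\,\d v_g=(1+\gam_j)\varrho(\xi_j)+\cO(\lam_j^{-2(1+\gam_j)})$. Because the $C(\Sigma)$ error is uniform, it contributes $\cO(\epsilon_{1,i})$ after integration. The constant $\epsilon_{0,i}$ contributes $(1+\gam_j)\varrho(\xi_j)\epsilon_{0,i}$ plus a negligible amount.

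\textbf{Case $i\neq j$.} The supports of $\chi_i$ and $\chi_j$ are disjoint, so the first term of the expansion vanishes on the support of the weight. What remains is $(1+\gam_i)\varrho(\xi_i)\int \chi_j e^{-\varphi_j}|y_{\xi_j}|^{2\gam_j}e^{\delta_j}H^g(\cdot,\xi_i)\,\d v_g$. Here $H^g(\cdot,\xi_i)$ near $\xi_j$ equals $G^g(\cdot,\xi_i)$, which is smooth there, so this integral is $(1+\gam_j)\varrho(\xi_j)H^g(\xi_j,\xi_i)$ up to an error. I would bound that error by passing to the rescaled variable $z=\lam_j y$, where the weight becomes $8(1+\gam_j)^2|z|^{2\gam_j}(1+|z|^{2(1+\gam_j)})^{-2}$, and using $|H(x)-H(\xi_j)|\le C|y|$. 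This gives $\cO(\lam_j^{-1})$ when $\gam_j>-1/2$, and the tail $\lam_j^{-2(1+\gam_j)}$ otherwise. I still need to check that $\lam_j^{-1}$ is dominated by $\epsilon_{1,j}$ or $\epsilon_{0,j}$; this is one place where the stated remainders may need adjusting. Symmetry of $H^g$ then yields the claimed formula.

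\textbf{Case $i=j$.} In the coordinates $y_{\xi_i}$ the factor $e^{-\varphi_i}$ cancels against the area element. Rescaling by $z=\lam_i y$, the first term becomes
\[
\int_{\R_{\xi_i}}\frac{8(1+\gam_i)^2|z|^{2\gam_i}}{(1+|z|^{2(1+\gam_i)})^2}\Bigl(4(1+\gam_i)\ln\lam_i-2\ln(1+|z|^{2(1+\gam_i)})\Bigr)\d z ,
\]
up to cut-off tails of size $\cO(\lam_i^{-2(1+\gam_i)}\ln\lam_i)$. The first piece equals $4(1+\gam_i)^2\varrho(\xi_i)\ln\lam_i$. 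For the second, substitute $t=|z|^{2(1+\gam_i)}$, which gives $\int_0^\infty(1+t)^{-2}\ln(1+t)\,\d t=1$. The angular factor is $2\pi$ in $\R^2$ and $\pi$ in $\R^2_+$, so the second piece is $-2(1+\gam_i)\varrho(\xi_i)$. The $H^g$ term gives $(1+\gam_i)^2\varrho(\xi_i)^2R^g(\xi_i)$ plus the same Taylor error as above. Here the first-order term vanishes by symmetry in the interior case. On the boundary it is paired against the half-disk; it is exactly cancelled only if $\partial_{y_1}$ is the sole surviving direction, and there I would use the choice of coordinates \eqref{varphixi} to control the normal component.

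I expect the main obstacle to be the bookkeeping of error orders, especially for $\gam_i<0$ and for boundary points. There the linear Taylor term of $H^g$ and the cross terms between $\epsilon_{0,i}$ and the weight have to be matched to the stated $\epsilon_{0,\cdot}$ and $\epsilon_{1,\cdot}$. To handle this I would expand $H^g$ to second order and integrate explicitly, using the radial structure of the bubble.
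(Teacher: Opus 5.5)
The gap is in your treatment of the $H^g$ term, which is where you suspected trouble. It is not a bookkeeping issue. The quantity
\[
\int_\Sigma \chi_j e^{-\varphi_j}|y_{\xi_j}|^{2\gam_j}e^{\delta_j}\,H^g(\cdot,\xi_i)\,\d v_g-(1+\gam_j)\varrho(\xi_j)\,H^g(\xi_j,\xi_i)
\]
is not a remainder: it equals $\epsilon_{0,j}+\cO(\epsilon_{1,j})$. The same holds for $i=j$, with $R^g(\xi_i)$ in place of $H^g(\xi_j,\xi_i)$.

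This is exactly where the term $(1+\gam_i)\varrho(\xi_i)\epsilon_{0,j}$ in \eqref{eq:B3-1} comes from; for $i=j$ it is the second copy of $(1+\gam_i)\varrho(\xi_i)\epsilon_{0,i}$. Expanding $P\idelta$ and testing against the density of $P\delta^{\gam_j}_{\lam_j,\xi_j}$ only yields $(1+\gam_j)\varrho(\xi_j)\epsilon_{0,i}$. The formula is symmetric in $i,j$, but your procedure is not, so the other $\epsilon_0$-term has to be extracted from this integral.

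Your Lipschitz bound $\cO(\lam_j^{-1})$ is much larger than both $\epsilon_{0,j}$ and $\epsilon_{1,j}$ for every $\gam_j>-\frac12$, including the regular case $\gam_j=0$. Even a second-order bound is of the same size as $\epsilon_{0,j}$ when $\gam_j\ge0$. So the integral must be computed, not estimated. ``Adjusting the remainders'' would prove a strictly weaker statement.

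The missing idea is not to Taylor expand $H^g$ at all; this is the paper's argument, with $i$ and $j$ interchanged. Since $\Gamma^g(\cdot,\xi_i)=-\frac{4}{\varrho(\xi_i)}\chi_i\ln|y_{\xi_i}|$, the leading part of Lemma~\ref{lem:proj_bubble} regroups as
\begin{align*}
&\chi_i\bigl(\delta_i-\ln(8(1+\gam_i)^2\lam_i^{-2(1+\gam_i)})\bigr)+(1+\gam_i)\varrho(\xi_i)H^g(\cdot,\xi_i)\\
&\qquad=(1+\gam_i)\varrho(\xi_i)G^g(\cdot,\xi_i)+\chi_i\ln\frac{(\lam_i|y_{\xi_i}|)^{4(1+\gam_i)}}{\bigl(1+(\lam_i|y_{\xi_i}|)^{2(1+\gam_i)}\bigr)^{2}}.
\end{align*}
Because $\int_\Sigma G^g(\cdot,\xi_i)\,\d v_g=0=\int_\Sigma P\delta^{\gam_j}_{\lam_j,\xi_j}\,\d v_g$, Green's representation formula evaluates the $G^g$ part exactly as $(1+\gam_i)\varrho(\xi_i)\,P\delta^{\gam_j}_{\lam_j,\xi_j}(\xi_i)$. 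Lemma~\ref{lem:proj_bubble} at the single point $\xi_i$ then gives
\begin{align*}
(1+\gam_i)\varrho(\xi_i)\,P\delta^{\gam_j}_{\lam_j,\xi_j}(\xi_i)
&=4(1+\gam_i)^2\varrho(\xi_i)\ln\lam_i\,\bdelta_{ij}+(1+\gam_i)(1+\gam_j)\varrho(\xi_i)\varrho(\xi_j)H^g(\xi_i,\xi_j)\\
&\quad+(1+\gam_i)\varrho(\xi_i)\epsilon_{0,j}+\cO(\epsilon_{1,j}).
\end{align*}
This delivers all the delicate terms at once. It needs no moment computations and no separate treatment of boundary points or $\gam<0$.

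The grouped logarithm vanishes for $i\neq j$ because the supports are disjoint. For $i=j$ it gives $-2(1+\gam_i)\varrho(\xi_i)+\cO(\lam_i^{-2(1+\gam_i)})$ by your substitution. Since this grouped integrand decays, its cut-off tail carries no $\ln\lam_i$. Your split into $4(1+\gam_i)\ln\lam_i$ and $-2\ln(1+|z|^{2(1+\gam_i)})$ loses such a factor, which exceeds $\epsilon_{1,i}$ when $\gam_i\le0$.

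Your second-order fallback can reproduce $\epsilon_{0,j}$, but only with two corrections:
\begin{itemize}
\item At a boundary point, the first-order term vanishes because of the Neumann condition in the chart: $\partial_{y_2}G^g=0$ on $\{y_2=0\}$, and likewise for $H^g$ near its pole. \eqref{varphixi} plays no role at that order.
\item The quadratic term must be computed exactly, using $\Delta_y=e^{\hat\varphi}\Delta_g$ and $\Delta_g G^g=1$ off the pole. This gives $\frac14\int_\Sigma\chi_j e^{-\varphi_j}|y_{\xi_j}|^{2+2\gam_j}e^{\delta_j}\,\d v_g=\epsilon_{0,j}+\cO(\epsilon_{1,j})$.
\end{itemize}
That amounts to re-proving Lemma~\ref{lem:proj_bubble} at one point, with the higher-order Taylor terms still to control. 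The Green-formula step gives this for free.
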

	\begin{proof}
		Using the expansions in Lemma~\ref{lem:proj_bubble} again, we can deduce that for any  $i,j=1, \ldots, p+q+N$  
		\begin{align*}
			\la P\idelta, P\delta^{\gam_j}_{\lam_j, \xi_j}\ra 
			&=\int_\Sigma \chi_i |y_{\xi_i}|^{2\gam_i} e^{-\varphi_i} e^{\idelta}\big (\chi_j \cdot(\delta^{\gam_j}_{\lam_j,\xi_j}-\ln  (8 (1+\gam_j)^2 \lam_j^{- 2(1+\gam_j)}))\\
			&\quad +(1+\gam_j) \varrho(\xi_j)H^g(\cdot, \xi_j)
			+ \epsilon_{0,j}
			+\cO( \epsilon_{1,j}  )\big) \, \d v_g\\
			&=\int_\Sigma \chi_i |y_{\xi_i}|^{2\gam_i} e^{-\varphi_i} e^{\idelta}  
			\chi_j \ln \frac{ \lam_j^{4(1+\gam_j)} |y_{\xi_j}|^ {4(1+\gam_j)}}{( 1+ \lam_j^{2(1+\gam_j)} |y_{\xi_j}|^ {2(1+\gam_j)})^2} \, \d v_g \\
			&\quad+(1+\gam_j)\varrho(\xi_j) \int_\Sigma ( -\Delta _g P\delta^{\gam_i}_{\lam_i,\xi_i}) G^g(\cdot,\xi_j)  \, \d v_g 
			+ (1+\gam_i)\varrho(\xi_i) \epsilon_{0,j}+\cO(\epsilon_{1,j})\\
			&=  -2(1+\gam_i)\varrho(\xi_i)\bdelta_{ij} + (1+\gam_j) \varrho(\xi_j) P\delta^{\gam_i}_{\lam_i,\xi_i}(\xi_j)  + (1+\gam_i)\varrho(\xi_i)\epsilon_{0,j}+\cO(\epsilon_{1,j}), 
		\end{align*}
where we applied  the Green's representation formula for $P\delta^{\gam_i}_{\lam_i,\xi_i} $ and 
		 the following estimates: 
		\begin{align*}
			&\quad 	\int_\Sigma \chi_i |y_{\xi_i}|^{2\gam_i} e^{-\varphi_i} e^{\idelta} \, \d v_g 
			= (1+\gam_i) \varrho(\xi_i) +\cO( \lam_i^{-2(1+\gam_i)} ) , \\
			&\quad 2\int_\Sigma \chi_i |y_{\xi_i}|^{2\gam_i} e^{-\varphi_i} e^{\idelta}\ln   \frac{ \lam_i^{2(1+\gam_i)}|y_{\xi_i}|^{2(1+\gam_i)} }{1+ \lam_i^{2(1+\gam_i)}|y_{\xi_i}|^{2(1+\gam_i)}}\, \d v_g = -2(1+\gam_i) \varrho(\xi_i) +\cO( \lam_i^{-2(1+\gam_i)} ). 
		\end{align*}
		Then, Lemma~\ref{lem:proj_bubble}, together with the estimates above, yields \eqref{eq:B3-1}.
	\end{proof}
	Next, we show the orthogonal properties for $PZ^j_i$ for $i=1,\ldots, p+q, j=1,\ldots, \ii(\xi_i)$ and $PZ^0_i$ for $i=1,\ldots,p+q+N$. 
	\begin{lemma}~\label{lem4_p}
		As $\eps\rightarrow 0$,  we have the following asymptotic estimates: 
		\begin{align*}
			&\quad	\langle PZ^j_i, PZ^{j'}_{i'}\rangle=\begin{cases}
				\frac 4 3 (1+\gam_i)^2 \varrho(\xi_i)	\bdelta_{i'i}\bdelta_{j'j} +\cO(  \epsilon_{0,i} ) \quad \text{ when } j \text{ or } j'=0, \\
				\frac{4}{3} \varrho(\xi_i)   \bdelta_{i'i}\bdelta_{j'j} +\cO( \lam_i^{-1} )  \quad  \text{ when } i, i'=1,\ldots, p+q, j=1,\ldots, \ii(\xi_i), 
			\end{cases}
		\end{align*}
		where  $\bdelta_{ij}$ is the Kronecker delta and $\epsilon_{0,i}$ is give by \eqref{def:varep_0i}. 
	\end{lemma}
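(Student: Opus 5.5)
We use the weak formulation of \eqref{def:PZ}. Since $PZ^{j'}_{i'}\in\oH$ and the right-hand side of \eqref{def:PZ} has zero mean, integration by parts with the Neumann condition gives
\[
\la PZ^j_i, PZ^{j'}_{i'}\ra=\int_\Sigma \chi_i |y_{\xi_i}|^{2\gam_i} e^{-\varphi_i} e^{\delta_i} Z^j_i\, PZ^{j'}_{i'}\,\d v_g .
\]
Into this identity we substitute the $C(\Sigma)$ expansions of $PZ^{j'}_{i'}$ from Remark~\ref{rk:B.1}. For $j'=0$ this is $PZ^0_{i'}=\chi_{i'}(Z^0_{i'}+2(1+\gam_{i'}))+$ an error of size $\epsilon_{0,i'}$ type. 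For $j'\ge1$ it is $PZ^{j'}_{i'}=\chi_{i'}\lam_{i'}^{-1}\cdot$(rescaled $z^0_{j'}$) $+\cO(\lam_{i'}^{-1})$. Because the bubble measure $\chi_i|y_{\xi_i}|^{2\gam_i}e^{-\varphi_i}e^{\delta_i}\,\d v_g$ has total mass $(1+\gam_i)\varrho(\xi_i)+o(1)$ and $Z^j_i$ is bounded, every uniform error term contributes only an error of the same order. What remains is the integral of the product of the two explicit profiles.

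\textbf{Off-diagonal terms.} For $i\neq i'$ the supports of $\chi_i$ and $\chi_{i'}$ are disjoint, since $U_{4r_0}(\xi_i)\cap U_{4r_0}(\xi_{i'})=\emptyset$. The leading term therefore vanishes, leaving only the error. For $i=i'$ we pass to the coordinates $y_{\xi_i}$; the factor $e^{-\varphi_i}$ cancels against the area element $e^{\varphi_i}\d y$. We then rescale $y\mapsto y/\lam_i$, which turns the integral into one over $\lam_i B^{\xi_i}_{2r_0}$ against the weight $\frac{8(1+\gam_i)^2|y|^{2\gam_i}}{(1+|y|^{2(1+\gam_i)})^2}$, with a tail of order $\lam_i^{-2(1+\gam_i)}$. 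For $j=0$ and $j'\ge1$, and for $j\neq j'$ both $\ge1$, the integrand is odd in $y_1$ (or in $y_2$ in the interior case). Hence these vanish by the symmetry of the disk or half-disk, since on $\R^2_+$ only the $y_1$-direction occurs.

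\textbf{Diagonal values.} With $t=|y|^{2(1+\gam)}$ we have $\int_{\R_\xi}\frac{8(1+\gam)^2|y|^{2\gam}}{(1+t)^2}\,\d y=(1+\gam)\varrho(\xi)$. For $j=j'=0$ we must compute
\[
\int \frac{8(1+\gam)^2|y|^{2\gam}}{(1+t)^2}\Big(4(1+\gam)^2\frac{(1-t)^2}{(1+t)^2}+4(1+\gam)^2\frac{1-t}{1+t}\Big)\d y .
\]
In polar coordinates with the substitution $t$, this reduces to the standard Beta-type integrals $\int_0^\infty\frac{(1-t)^2}{(1+t)^4}\,dt=\frac13$ and $\int_0^\infty\frac{1-t}{(1+t)^3}\,dt=0$. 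These produce $\frac43(1+\gam_i)^2\varrho(\xi_i)$, because the angular measure is $\varrho(\xi_i)/4$ times $2\pi\cdot\frac{1}{2(1+\gam)}$ up to bookkeeping. For $j=j'\ge1$, which occurs only for regular points with $\gam_i=0$, the integral is $\int\frac{8}{(1+|y|^2)^2}\frac{16y_j^2}{(1+|y|^2)^2}\,\d y$. This equals $\frac{4}{3}\varrho(\xi_i)$, via $\int_{\R^2}\frac{|y|^2}{(1+|y|^2)^4}\,\d y=\frac{\pi}{6}$ and halving on $\R^2_+$.

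\textbf{Main obstacle.} The delicate point is the error bookkeeping for $j$ or $j'=0$ in the singular case. Here the constant $2(1+\gam_i)$ and the $\epsilon_{0,i}$-correction in $PZ^0_i$ must be handled carefully: the constant part integrates against $Z^0_i$ times the bubble to an exactly vanishing quantity, namely the $\int\frac{1-t}{(1+t)^3}$ term. Consequently the residual is genuinely $\cO(\epsilon_{0,i})$, plus tails of order $\lam_i^{-2(1+\gam_i)}$, which are absorbed in the stated error. For $j,j'\ge1$ the $\cO(\lam_i^{-1})$ error comes directly from Remark~\ref{rk:B.1}.
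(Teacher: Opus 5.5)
Your strategy is the paper's: test \eqref{def:PZ} against $PZ^{j'}_{i'}$, insert the expansions of Remark~\ref{rk:B.1}, use disjoint supports for $i\neq i'$, rescale by $\lam_i$ in the chart $y_{\xi_i}$, remove mixed terms by parity, and evaluate radial integrals. Your sign and your constant in $PZ^0_{i'}\approx\chi_{i'}(Z^0_{i'}+2(1+\gam_{i'}))$ are actually more accurate than the paper's write-up, though the constant does not matter because $\int W z^{\gam}_0=0$.

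The step that fails is your assertion that the $j=j'=0$ integrals ``produce $\frac43(1+\gam_i)^2\varrho(\xi_i)$ up to bookkeeping''. They do not. Use angular measure $\varrho(\xi_i)/4$ and $|y|^{2\gam_i}r\,\d r=\d t/(2(1+\gam_i))$; then your own expression equals
\[
32(1+\gam_i)^4\cdot\frac{\varrho(\xi_i)}{4}\cdot\frac{1}{2(1+\gam_i)}\Big(\frac13+0\Big)=\frac43(1+\gam_i)^3\varrho(\xi_i).
\]
This matches the value $\frac{32\pi}{3}(1+\gam)^3$ of the integral quoted in the paper's own proof. So the exponent $2$ in the statement is a misprint for $3$. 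It is invisible when $\gam_i=0$, and it is harmless downstream, since only the block $j,j'\ge1$ is used in Claim~\ref{claim:1} and Lemma~\ref{lem:equi_critical}. An honest completion of your argument proves the lemma with $(1+\gam_i)^3$; bending the computation to agree with the misprinted constant is a false step, not bookkeeping.

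The error bookkeeping has similar defects.

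\emph{Negative weights.} For $\gam_i\in(-1,0)$ we have $\epsilon_{0,i}=0$ by \eqref{def:varep_0i}, so nothing can be ``absorbed in the stated error''. Remark~\ref{rk:B.1} leaves a remainder $\cO(\lam_i^{-2(1+\gam_i)})$ in $PZ^0_i$, which is not of ``$\epsilon_{0,i}$ type''. Truncating to $\lam_iB^{\xi_i}_{2r_0}$ costs another $\cO(\lam_i^{-2(1+\gam_i)})$. What your argument really gives is $\cO(\epsilon_{0,i}+\epsilon_{1,i})$, which agrees with the stated error only for $\gam_i\ge0$. The paper's proof has the same imprecision.

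\emph{Which factor to expand.} When one index is $0$, or when $i\neq i'$, use the symmetry of $\la\cdot,\cdot\ra$ to put the expansion of Remark~\ref{rk:B.1} on the factor $PZ^0_i$. Expanding a $PZ^{j'}_{i'}$ with $j'\ge1$, as your written identity does, only yields $\cO(\lam_{i'}^{-1})$, which is in general not $\cO(\epsilon_{0,i})$ (e.g.\ for $i=i'$ regular, $\lam_i^{-1}\gg\lam_i^{-2}\ln\lam_i$).

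\emph{Normalization slip.} By Remark~\ref{rk:B.1}, $PZ^{j'}_{i'}=\chi_{i'}Z^{j'}_{i'}+\cO(\lam_{i'}^{-1})$, with no prefactor $\lam_{i'}^{-1}$. Your computation of the $j=j'\ge1$ diagonal already uses the correct normalization and is fine.
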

	\begin{proof}
		We begin the proof by considering the case $j'=0$. 
		Using Remark \ref{rk:B.1}, we have 
		\begin{align*}
			\langle PZ^j_i, PZ^{j'}_{i'}\rangle &= \int_\Sigma (-\chi_i \Delta_g Z^j_i) ( \chi_{i'} (Z^0_{i'}+2) +\cO( \epsilon_{0,i'})) \, \d v_g \\
			&= -\int_{\Sigma} \chi_i |y_{\xi_i}|^{2\gam_i}  e^{-\varphi_i}e^{\delta_i } Z^j_i (Z^0_{i'}+2) +\cO( \epsilon_{0,i'}) \, \d v_g  \\
			& =    \frac{ 4}{3} (1+\gam_i)^2  \varrho(\xi_i)\bdelta_{i'i} \bdelta_{0j} +\cO( \epsilon_{0,i'}), 
		\end{align*}
		where  we used the integral  $\int_{\R^2} \frac{64 (1+\gam)^4 ( 1-|y|^{2(1+\gam)}) |y|^{2\gam}}{(1+|y|^{2(1+\gam)})^ 4 } \, \d y= \frac {32\pi} 3  (1+\gam)^3$ for any $\gam>-1.$
		It remains to consider the case $j,j'\neq 0$. Then, it follows that $i,i'\in \{1,\ldots, p+q\}$. 
		By Remark \ref{rk:B.1} again, we have 
		\begin{align*}
			\langle PZ^j_i, PZ^{j'}_{i'}\rangle &=  \int_\Sigma (-\chi_i \Delta_g Z^j_i) ( \chi_{i'} Z^{j'}_{i'} +\cO(  \lam_{i'}^{-1}) )\, \d v_g = \frac  4  3 \varrho(\xi_i) \bdelta_{ii'}\bdelta_{jj'}  +\cO( \lam_{i' }^{-1}), 
		\end{align*}
		where we applied that $ \int_{\R^2} \frac{ 2 y_j^2}{(1+|y|^2)^4} \, \d y= \int_{\R^2} \frac{|y|^2}{(1+|y|^2)^4} \, \d y =\frac{\pi}{6}.$
	\end{proof}

	\begin{lemma}
		\label{lem:diif_e^W_sum_e^u}   For any $\iota\in (0, 1)$, there exists $s_0>0$ sufficiently small such that 
		for  any 
		$s\in (1, s_0)$, 	
		$$\Big  \| \eps^2  Ke^{\sum_{i=1}^{p+q+N} P\delta_i}  -\sum_{i=1}^{p+q+N} \chi_i |y_{\xi_i}|^{2\gam_i} e^{-\varphi_i}e^{\delta_i} \Big \|_s  =\mathcal{O}(\eps^{ \frac{\iota}{1+\max\{0, \gam_*\}}} ),$$
		as $\eps \rightarrow 0,$
		where $\gam_*=\max_{i=1,\ldots, p+q+N}\gam_i$. 
	\end{lemma}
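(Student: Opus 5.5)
The plan is to split $\Sigma$ into the small pieces $U_{r_0}(\xi_i)$, $i=1,\ldots,p+q+N$, and the complement $\Sigma_0:=\Sigma\setminus\bigcup_i U_{r_0}(\xi_i)$, and to estimate the $L^s$ norm of the difference $D:=\eps^2 K e^{W}-\sum_i \chi_i|y_{\xi_i}|^{2\gam_i}e^{-\varphi_i}e^{\delta_i}$ separately on each piece.

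\emph{Away from the blow-up points.} On $\Sigma_0$ we use Lemma~\ref{lem:proj_bubble}. It gives $P\delta_i=(1+\gam_i)\varrho(\xi_i)G^g(\cdot,\xi_i)+\cO(\epsilon_{0,i}+\epsilon_{1,i})=\cO(1)$ uniformly, so $\eps^2Ke^W=\cO(\eps^2)$ there. Each bubble term satisfies $e^{\delta_i}=\cO(\lam_i^{-2(1+\gam_i)})=\cO(\eps^2)$ on $\Sigma_0$. Hence $\|D\|_{L^s(\Sigma_0)}=\cO(\eps^2)$, and this part is harmless.

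\emph{Near $\xi_i$.} On $U_{r_0}(\xi_i)$ we have $\chi_i\equiv1$ and $\chi_j\equiv0$ for $j\neq i$. We write $K=K_i|y_{\xi_i}|^{2\gam_i}$ as in \eqref{def:Ki}, and insert the expansion of Lemma~\ref{lem:proj_bubble} for $P\delta_i$ together with $P\delta_j=(1+\gam_j)\varrho(\xi_j)G^g(\cdot,\xi_j)+o(1)$ for $j\ne i$. This gives
\[
\eps^2Ke^W=\eps^2\frac{\lam_i^{2(1+\gam_i)}}{8(1+\gam_i)^2}\,K_i\,e^{(1+\gam_i)\varrho(\xi_i)H^g(\cdot,\xi_i)+\sum_{j\ne i}(1+\gam_j)\varrho(\xi_j)G^g(\cdot,\xi_j)}\,|y_{\xi_i}|^{2\gam_i}e^{\delta_i}\,e^{\cO(\epsilon_{0}+\epsilon_1)}
=\frac{d_i(x)}{d_i(\xi_i)}\,|y_{\xi_i}|^{2\gam_i}e^{\delta_i}\,\bigl(1+\cO(\epsilon_0+\epsilon_1)\bigr).
\]
The second equality uses the choice \eqref{def:lam_i}, $\lam_i^{-2(1+\gam_i)}=d_i\eps^2$, which was made exactly so that the constants cancel. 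Since $d_i$ is smooth near $\xi_i$ (for $i>p+q$ the factor $K_i$ is smooth and positive), we get $d_i(x)/d_i(\xi_i)-1=\cO(|y_{\xi_i}(x)|)$. Writing $e^{-\varphi_i}=1+\cO(|y_{\xi_i}|)$ by \eqref{varphixi}, we obtain
\[
|D|\le C\bigl(|y_{\xi_i}|+\epsilon_{0,i}+\epsilon_{1,i}+\textstyle\sum_j(\epsilon_{0,j}+\epsilon_{1,j})\bigr)\,|y_{\xi_i}|^{2\gam_i}e^{\delta_i}\quad\text{on } U_{r_0}(\xi_i).
\]
The volume factor $e^{\varphi_i}$ of $\d v_g$ is bounded and does not affect this.

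\emph{Scaling computation.} Substituting $y=z/\lam_i$ gives
\[
\bigl\||y|^{a}\,|y|^{2\gam_i}e^{\delta_i}\bigr\|_{L^s(B_{r_0})}\asymp\lam_i^{2-a-2/s}\Bigl(\int_{\lam_iB_{r_0}}\frac{|z|^{s(a+2\gam_i)}}{(1+|z|^{2(1+\gam_i)})^{2s}}\,dz\Bigr)^{1/s}.
\]
For $s$ close to $1$ and $a\in\{0,1\}$ the integral converges at infinity. At the origin it converges provided $s(a+2\gam_i)>-2$, which holds for $s$ near $1$ because $\gam_i>-1$. With $\lam_i\asymp\eps^{-1/(1+\gam_i)}$ this yields:
\begin{itemize}
\item for $a=1$, the bound $\lam_i^{1-2/s+1}\cdot\lam_i^{-1}\cdots$, more precisely $\cO(\lam_i^{1-2(s-1)/s}\lam_i^{-1})=\cO(\lam_i^{-1+\frac{2(s-1)}{s}})$;
\item for the $\epsilon$-terms, $\cO\bigl((\epsilon_0+\epsilon_1)\lam_i^{2(s-1)/s}\bigr)$.
\end{itemize}
Since $\lam_i^{-1}\asymp\eps^{1/(1+\gam_i)}\le \eps^{1/(1+\max\{0,\gam_*\})}$, and $\epsilon_{0,i},\epsilon_{1,i}$ are powers of $\eps$ at least as good (up to logarithms), choosing $s_0$ close to $1$ makes the loss $\lam_i^{2(s-1)/s}$ at most $\eps^{(1-\iota)/(1+\max\{0,\gam_*\})}$. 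Adding the three contributions gives the bound $\cO(\eps^{\iota/(1+\max\{0,\gam_*\})})$.

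\emph{Main obstacle.} The delicate step is bookkeeping the error $e^{\cO(\epsilon_0+\epsilon_1)}$ uniformly when some $\gam_j<0$ while another is large. Here $\epsilon_{0,j}$ and $\epsilon_{1,j}$ for a different index $j$ enter the region near $\xi_i$. One must check that $\max_j(\epsilon_{0,j}+\epsilon_{1,j})$ still beats $\eps^{1/(1+\max\{0,\gam_*\})}$ after multiplication by $\lam_i^{2(s-1)/s}$, which might be large when $\gam_i<0$. I would handle this with the crude bounds $\epsilon_{0,j}\lesssim\lam_j^{-2}\ln\lam_j\lesssim\eps^{2/(1+\gam_*)}|\ln\eps|$ and $\epsilon_{1,j}\lesssim\eps^2|\ln\eps|$. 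The margin of a full power $\eps^{1/(1+\max\{0,\gam_*\})}$ then absorbs the polynomial loss once $s_0$ is taken close enough to $1$, with $s_0$ depending on $\iota$ and on $\min_i\gam_i$.
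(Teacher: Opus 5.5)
Your proposal is correct and follows essentially the paper's route: you localize at each $\xi_i$, then use Lemma~\ref{lem:proj_bubble} and the choice \eqref{def:lam_i} of $\lam_i$ to write the difference as $|y_{\xi_i}|^{2\gam_i}e^{\delta_i}$ times an error of size $\sum_j(\epsilon_{0,j}+\epsilon_{1,j})+|y_{\xi_i}|$ (the paper's $|e^{\Theta_i}-1|$), and finally rescale by $\lam_i$ with $s$ close to $1$. There are two harmless imprecisions: for $a=1$ and $\gam_i<-\frac12$ the rescaled integral does not converge at infinity when $s$ is near $1$, but the integral over $\lam_i B_{r_0}$ then contributes $\cO(\lam_i^{-2(1+\gam_i)})=\cO(\eps^2)$, still far below the target rate; and on $\Sigma_0$ you should use $\eps^2\|K\|_{L^s}$ rather than a pointwise bound, since $K$ is unbounded near points of $Q\setminus Q_1$ with negative weight.
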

	\begin{proof}
		{\sc Case I}.  $\gam_*>0$.
		For  $i=1,\ldots, p+q+N$ and  $y\in \Omega_i= \lam_i  B^{\xi_i}_{2r_0}$, we have   
		\begin{align*}
			\Theta_{i}(y) &:= ( W- \delta_i +\varphi_i + 2\ln  \eps +\ln  K_i) \circ y^{-1}_{\xi_i}( \frac y {\lam_i})\\
			&= - \ln  ( 8(1+\gam_i)^2) + (1+\gam_i) \varrho(\xi_i)R^g(\xi_i) +\sum_{\mytop{j=1,\ldots, p+q+N}{j\neq i}}(1+\gam_j)\varrho(\xi_j) G^g(\xi_i,\xi_j) + \ln  K_i(\xi_i) \\
			& \quad  +2(1+\gam_i) \ln  \lam_i  +2\ln  \eps
			+ \sum_{j=1}^{p+q+N} \epsilon_{0,j}+ \cO\Big( \sum_{j=1}^{p+q+N}  (\epsilon_{1,j} + \lam_i^{-1}|y|) \Big) \\
			&=  \sum_{j=1}^{p+q+N} \epsilon_{0,j}+ \cO\Big( \sum_{j=1}^{p+q+N}  (\epsilon_{1,j} + \lam_i^{-1}|y|) \Big), 
		\end{align*}
		in view of \eqref{def:lam_i} and \eqref{def:d_i}. 
		Using Lemma~\ref{lem:proj_bubble} and \eqref{def:lam_i}, we have   
		\begin{align*}
			&\quad \int_{\Sigma} \Big|  \eps^2  Ke^W  -\sum_{i=1}^{p+q+N} \chi_i |y_{\xi_i}|^{2\gam_i} e^{-\varphi_i}e^{\delta_i} \Big|^s \,\d v_g 
			\\
			&=\sum_{i=1}^{p+q+N}  \lam_i^{-2(1+\gam_i s)} \int_{\lam_i B^{\xi_i}_{r_0}(\xi_i)} |y|^{2\gam_i s}e^{(1-s) \varphi_{\xi_i}\circ y_{\xi_i}^{-1}(\frac y {\lam_i} )}\Big|  \eps^2  K_i \circ y_{\xi_i}^{-1}(\frac y {\lam_i}) e^{\varphi_{\xi_i}\circ y_{\xi_i}^{-1}(\frac y{\lam_i})}  e^{W\circ y_{\xi_i}^{-1}(\frac y{\lam_i})}\\
			& \quad -  e^{\delta_i\circ y_{\xi_i}^{-1}(\frac y{\lam_i})}  \Big|^s \, d y+ \mathcal{O}(\eps^{2s}) \\
			&=\cO\Big(  \sum_{i=1}^{p+q+N}  \lam_i^{-2+2s}\int_{\Omega_i} \frac{|y|^{2\gam_i s}}{(1+|y|^{2(1+\gam_i)})^{2s}}|e^{\Theta_{i}(y)}-1 |^s \, \d y \Big) + \mathcal{O}(\eps^{2s}) \\
			&=\cO( \eps^{s\frac{2}{1+\gam_*}+ \frac{s}{1+\gam_-} (\frac{2}{s}-2)}+ \eps^{s\frac{1}{1+\gam_*}+ \frac{s}{1+\gam_*}(\frac{2}{s}-2)} )=\cO( \eps^{s\frac{\iota}{1+\gam_*}} ),  
		\end{align*}
		for any $s\in (1, \min\{\frac{ 2}{1+\iota}, (1- \frac{(1-\iota)(1+\gam_-)}{2(1+\gam_*)} )^{-1}\})$, where $\gam_-=\min\{\gam_i: i\in 1, \ldots, p+q+N\}. $
		
		{\sc Case II. } $\gam_*= 0$.
		Similarly, in this case we have 
		$
		\Theta_{i}(y) 
		=   \cO( \eps^2|\ln \eps|+ \frac {|y|} {\lam_i}). 
		$
		Then, it follows that 
		\begin{align*}
			&\quad \int_{\Sigma} \Big|  \eps^2  Ke^W  -\sum_{i=1}^{p+q+N} \chi_i |y_{\xi_i}|^{2\gam_i} e^{-\varphi_i}e^{\delta_i} \Big|^s \,\d v_g 
			\\
			&=\cO\Big(  \sum_{i=1}^{p+q+N}  \lam_i^{-2+2s}\int_{\Omega_i} \frac{|y|^{2\gam_i s}}{(1+|y|^{2(1+\gam_i)})^{2s}}|e^{\Theta_{i}(y)}-1 |^s \, \d y \Big) + \mathcal{O}\Big(\eps^{2s}+\sum_{i=1}^{p+q+N}\lam_i^{-2(1+\gam_i)s}\Big) \\
			&=\cO(\eps^{ 2s + \frac{s}{1+\gam_-} ( \frac 2 s -2 )}|\ln \eps|  + \eps^{s + s( \frac 2 s -2) }                                      )=\cO( \eps^{s \iota}),
		\end{align*}
		where 
		$s\in (1,  \min\{\frac{ 2}{1+\iota}, (1- \frac{(1-\iota)(1+\gam_-)}{2(1+\gam_*)} )^{-1}\} )$.

		{\sc Case III.  $\gam_*<0$. }
		Similarly, in  this case we have 
		$
		\Theta_{i}(y) 
		=   \cO(\eps^2+ \frac {|y|} {\lam_i}). 
		$
		Then, it follows that 
		\begin{align*}
			&\quad \int_{\Sigma} \Big|  \eps^2  Ke^W  -\sum_{i=1}^{p+q+N} \chi_i |y_{\xi_i}|^{2\gam_i} e^{-\varphi_i}e^{\delta_i} \Big|^s \,\d v_g 
			\\
			&=\cO\Big(  \sum_{i=1}^{p+q+N}  \lam_i^{-2+2s}\int_{\Omega_i} \frac{|y|^{2\gam_i s}}{(1+|y|^{2(1+\gam_i)})^{2s}}|e^{\Theta_{i}(y)}-1 |^s \, \d y \Big) + \mathcal{O}\Big(\eps^{2s}+\sum_{i=1}^{p+q+N}\lam_i^{-2(1+\gam_i)s}\Big) \\
			&=\cO( \eps^{ 2 s+ \frac{s}{(1+\gam_-)} ( \frac 2 s - 2 )}+ \eps^{\frac s  {1+\gam_*}+ \frac{s}{1+\gam_*}(\frac 2 s -2)} )=\cO( \eps^{s\iota} ), 
		\end{align*}
		where 
		$s\in (1, \min\{\frac{ 2}{1+\iota}, (1- \frac{(1-\iota)(1+\gam_-)}{2} )^{-1}\} ) )$.
		
	\end{proof}

	
	

	\begin{flushleft}
		\noindent {\sc Mohameden Ahmedou}\\
		
		\noindent  {\it Mathematisches Institut der Justus-Liebig-Universit\"at Giessen} \\ 
		Arndtsrasse 2, D-35392 Giessen, Germany \\
		\textsf{\href{mailto:Mohameden.Ahmedou@math.uni-giessen.de}{Mohameden.Ahmedou@math.uni-giessen.de}}
		
		\noindent
		{\sc Zhengni Hu}\\
		{\it School of Mathematical Sciences,
			Shanghai Jiao Tong University}\\
		800 Dongchuan RD, Minhang District, 200240 Shanghai, China\\
		\textsf{\href{mailto:zhengni_hu@sjtu.edu.cn}{zhengni\_hu@sjtu.edu.cn}}\\
		\vskip10pt
		
		\noindent {\sc Miaomiao Zhu} \\
		{\it School of Mathematical Sciences, Shanghai Jiao Tong University}\\
		800 Dongchuan Road, Shanghai, 200240, P. R. China \\ 
		\textsf{\href{mailto: mizhu@sjtu.edu.cn}{mizhu@sjtu.edu.cn}
		}
		
	\end{flushleft}

\end{document}